\theoremstyle{plain}
\newtheorem{theorem}{Theorem}[section]
\newtheorem{lemma}[theorem]{Lemma}
\newtheorem{corollary}[theorem]{Corollary}
\newtheorem{proposition}[theorem]{Proposition}
\theoremstyle{definition}
\newtheorem{definition}[theorem]{Definition}
\newtheorem{remark}[theorem]{Remark}
\newcommand{\F}{\mathbb F}
\newcommand{\N}{\mathrm N}
\newcommand{\cL}{\mathcal L}
\newcommand{\Aut}{\mathrm{Aut}}
\newcommand{\GL}{\mathrm{GL}}
\newcommand{\PG}{\mathrm{PG}}
\newcommand{\GaL}{\Gamma\mathrm{L}}
\newcommand{\ints}{\mathrm{intn}_\sigma(\Gamma)}
\newcommand{\PGaL}{\mathrm{P}\Gamma\mathrm{L}}
\newcommand{\Tr}{ \ensuremath{ \mathrm{Tr}}}
\newcommand{\RN}[1]{%
  \textup{\uppercase\expandafter{\romannumeral#1}}%
}
\newcommand{\rn}[1]{%
  \textup{\lowercase\expandafter{\romannumeral#1}}%
}
\title{A geometric characterization of known maximum scattered linear sets of $\PG(1,q^n)$}
\author{G.G. Grimaldi, S. Gupta, G. Longobardi, R. Trombetti}
\date{}
\begin{document}

\maketitle

\begin{abstract}
\noindent An \textit{$\mathbb{F}_q$-linear set} $L=L_U$ of $\Lambda=\mathrm{PG}(V, \mathbb{F}_{q^n}) \cong \mathrm{PG}(r-1,q^n)$ is a set of points defined by non-zero vectors of an $\mathbb{F}_q$-subspace $U$ of $V$. \noindent The integer $\dim_{\mathbb{F}_q} U$ is called the \textit{rank} of $L$. In [G.~Lunardon, O.~Polverino:  Translation ovoids of orthogonal polar spaces. \textit{Forum Math.} \textbf{16} (2004)], it was proven that any $\mathbb{F}_q$-linear set $L$ of $\Lambda$ of rank $u$ such that $\langle L \rangle=\Lambda$ is either a canonical subgeometry of $\Lambda$ or there are a $(u-r-1)$-dimensional subspace $\Gamma$ of $\mathrm{PG}(u-1,q^n) \supset \Lambda$ disjoint from $\Lambda$ and a canonical subgeometry $\Sigma \cong \PG(u-1,q)$ disjoint from $\Gamma$ such that $L$ is the projection of $\Sigma$ from $\Gamma$ onto $\Lambda$. The subspace $\Gamma$ is called the \textit{vertex} of the projection. In this article, we will show a method to reconstruct the vertex $\Gamma$ for a peculiar class of linear sets of rank $u = n(r - 1)$ in $\mathrm{PG}(r - 1, q^n)$
called {\em evasive} linear sets. Also, we will use this result to characterize some families of linear sets of the projective line $\mathrm{PG}(1,q^n)$ introduced from 2018 onward, by means of certain properties of their projection vertices, as done in [B.~Csajbók, C.~Zanella: On scattered linear sets of pseudoregulus type in $\PG(1, q^t)$, \newblock{\em Finite Fields Appl.} \textbf{41} (2016)] and in  [C. Zanella, F. Zullo: Vertex properties of maximum scattered linear sets of $\PG(1, q^n)$. {\em Discrete Math.} \textbf{343}(5) (2020)].
\end{abstract}

\noindent \thanks{2020 MSC: 51E20, 05B25 }\\
\thanks{{\em Keywords}: linear set, linearized polynomial, finite field, projective space, subgeometry}

\section{Introduction}\label{sec: intro}

Let $u \geq n, r \geq 2$ be positive integers and let $\PG(r-1,q^n)$ be the projective space over the $r$-dimensional vector space $V=\F_{q^n}^r$. A subset $L_U$ of $\PG(r-1,q^n)$ is called a {\it linear set} if its points are defined by non-zero elements of an $\mathbb{F}_q$-subspace $U$ of $V$. More precisely, $$L_U=\{\langle \textbf{u}\rangle_{\mathbb{F}_{q^n}}:\textbf{u}\in U\setminus \{\boldsymbol{0}\}\}.$$ If $\text{dim}_{\F_q}(U)=u$, then we say that $L_U$ is a linear set of {\it rank} $u$, and we will denote this by the symbol $\mathrm{rk}(L_U)=u$. 
The largest possible rank of a proper  $\F_q$-linear set $L_U$ of $\PG(r-1,q^n)$ is $(r-1)n$,  otherwise $L_U=\PG(r-1,q^n)$.
Also, if $L_U$ has rank $r$ and $\langle L_U \rangle=\PG(r - 1, q^n)$, we call it
 a \textit{canonical subgeometry} of $\PG(r - 1, q^n)$. \\
Regarding the size, we have that $|L_U| \leq q^{u-1}+q^{u-2}+\dots+q+1,$ and if this bound is attained we say that $L_U$ is a {\it scattered} linear set of $\PG(r-1, q^n)$. 

Two linear sets $L_U$ and $L_W$ are \textit{$\PGaL$-equivalent} (or simply \textit{equivalent}) if there is $\phi \in \PGaL(r,q^n)$ such that $L_U^{\phi}=L_W$. In this case, we will write $L_U \cong L_W$.
It is known that any canonical subgeometry of $\PG(r-1,q^n)$ is equivalent to the linear set
\begin{equation*}
   \{ \langle (a_0,\ldots,a_{r-1}) \rangle_{\F_{q^n}} \colon (a_0,\ldots,a_{r-1}) \in \F^r_q \setminus \{\textbf{0}\} \} \cong  \PG(r-1,q).
\end{equation*}
A subspace $S=\PG(Z,q^n)$ of $\PG(V, q^n)$ has \textit{weight $i$} with respect to $L$ if $\dim_{\F_q}(Z \cap U)=i$, and we will use the following notation $\mathrm{wt}_{L}(S)=i$. 

A linear set $L \subset \PG(V,q^n)$ is said to be $(h,k)_q$-{\it evasive} if there exists a subspace $U \leq V$ with $L=L_U$ such that $\dim \langle L \rangle \geq h-1$, and for every $h$-dimensional
$\mathbb{F}_{q^n}$-subspace $T$ of $V$, we have that $L_{U \cap T}$ has rank at most $k$, \cite{Bartoli_Csajbok_Marino_Trombetti}.
Note that if $h=k=1$, then $L_U$ is scattered. 

As it was proven in 2000 by Blokhuis and Lavrauw, if $L_U$ is scattered then its rank is at most $\lfloor\frac{rn}{2}\rfloor$ and if this bound is attained, we say that $L_U$ is {\it maximum} scattered, \cite{Blokhuis-Lavrauw}. 

Lunardon and Polverino in \cite[Theorem 2]{Lunardon_Polverino} showed that every linear set of rank $u$ of $\Lambda=\PG(V,q^n)$ spanning $\Lambda$ over $\F_{q^n}$, is either a canonical subgeometry of $\Lambda$ or it can be obtained as projection of a canonical subgeometry $\Sigma \cong \PG(u-1,q)$ of $\Sigma^* =\PG(u-1,q^n) \supset \Lambda$ from a suitable subspace $\Gamma$ of $\Sigma^*$ onto $\Lambda$ such that $\Gamma \cap \Sigma = \emptyset = \Gamma \cap \Lambda$. The subspaces $\Gamma$ and $\Lambda$ are also called {\it vertex} and {\it center} of the projection, respectively. 

We recall that if $\Sigma$ is a canonical subgeometry of $\Sigma^*$, a subspace $S=\PG(Z,\F_{q^n})$ of $\Sigma^*$ is a \textit{subspace of $\Sigma$} if the dimension of $Z$ is equal to the rank of the $\F_q$-linear set $S \cap \Sigma$.  This is equivalent to say that if $\sigma$ is a collineation of $\Sigma^*$ of order $n$ fixing $\Sigma$ pointwise (i.e., $\mathrm{Fix}(\sigma)=\Sigma$), then the subspace $S$ is fixed by $\sigma$, see \cite[Lemma 1]{normal-spread}.
Let $P$ be a point of $\Sigma^*$, the subspace $\mathcal{L}_{P,\sigma}= \langle P, P^{\sigma}, \ldots, P^{\sigma^{n-1}} \rangle$ of $\Sigma^*$ clearly is a subspace of $\Sigma$ of dimension at most $n-1$.
The point $P$ will be called an \textit{imaginary} point of $\Sigma^*$, with respect to $\Sigma$, if $\dim \mathcal{L}_{P,\sigma}=n-1$.
If the collineation $\sigma$ is clear from the context, we will indicate $\mathcal{L}_{P,\sigma}$ simply by $\mathcal{L}_{P}$.
Also, we will denote by $\Aut(\Sigma)$, the \textit{automorphism group} of $\Sigma$, i.e., the group made up of all the maps in $\PGaL(u,q^n)$ which commute with $\sigma$ and we will put $\mathrm{LAut}(\Sigma)= \Aut(\Sigma) \cap \mathrm{PGL}(u,q^n)$.\\ 
In this article a method to reconstruct, up to the action of the setwise stabilizer of $\Sigma$ in $\PGaL(u,q^n)$, the vertex $\Gamma$ of an $(r-1,n(r-1)-2)_q$-evasive linear set of rank $u=n(r-1)$ in $\PG(r-1,q^n)$ from a set made up of $r-1$ imaginary points of $\PG(u-1,q^n)$ and their conjugates with respect to a collineation of $\PG(u-1,q^n)$ fixing  pointwise a suitable subgeometry $\Sigma$, will be given; see Theorem \ref{Fdirect} and \ref{GPT}. 

The case when $r=2$ and $L$ is a maximum scattered linear sets of $\PG(1,q^n)$ is of particular interest because, as proven by Sheekey in Section $5$ of \cite{sheekey}, these linear sets yield remarkable examples of Maximum Rank Distance codes (MRD codes, for short); indeed, linear MRD codes $2$-dimensional over $\F_{q^n}$ with minimum distance $n-1$, which have various important applications in communications and cryptography, see \cite{sheekey2} and  the references therein.
In this circumstance, in the same spirit of \cite{Csajbok_Zanella} and \cite{Zanella-Zullo}, elaborating on above mentioned achievement, we will characterize families of maximum scattered linear sets of $\PG(1,q^n)$ introduced from 2018 onwards (see \cite{Csajbok_Marino_Zullo, Csajbok-Marino-Polverino-Zanella, longobardi_marino_trombetti_zhou, longobardi_zanella, neri_santonastaso_zullo, SmaZaZu}) through certain properties of their projection vertices.

\section{Preliminary results}\label{sec: prel}

\noindent  Up to a suitable projectivity, any linear set $L$ of rank $n(r-1)$ in $\PG(r-1,q^n)$ can be written in the following form \begin{equation}{\label{linearset}}
    L=L_F=\{\langle({\bf x},F({\bf x}))\rangle_{\F_{q^n}} \, \colon \, {\bf x}=(x_0,x_1,...,x_{r-2}) \in \F_{q^n}^{r-1}, {\bf x} \ne {\bf 0}\},
\end{equation}
where 
\begin{equation}\label{multivariatepolynomial}
F({\bf x})=\sum_{j=0}^{r-2}f_{j}(x_j),
\end{equation}
and $f_{j}(x_j)=\sum_{i=1}^{n-1} a_{ij} x_j^{q^{si}}$
is an $\F_{q^s}$-linearized polynomial with coefficients over $\F_{q^n}$ in the indeterminate $x_j$ for each $j\in \{0,1,...,r-2\}$ and $\gcd(s,n)=1$, see \cite[proof of Proposition 4]{Csajbok_Marino_Pepe}.\\
Let us set some notations that we will use throughout this section. 
If $F(\textbf{x})$ is a multivariate polynomial as in \eqref{multivariatepolynomial}, we will denote by
\begin{itemize}
    \item [$a)$] $m_j:=\deg_{q^s} f_j(x_j)= \max \{ i \in \{1,2,\ldots,n-1\} \, | \, a_{ij} \neq 0 \}$, i.e., the $q^s$-\textit{degree} of $f_j(x_j)$, and 
    \item [$b)$] $I_j : =\mathrm{supp}_{q^s}f_j(x_j)=\{i \in \{1,2,\ldots,n-1\} : a_{ij} \neq 0 \}$, i.e., the $q^s$-\textit{support} of $f_j(x_j)$.
\end{itemize}

Let $\Sigma^*=\PG(n(r-1)-1,q^n)$ be a projective space and consider the canonical subgeometry of $\Sigma^*$ defined as follows:
\begin{equation}\label{Sigma}
\boldsymbol{\Sigma}= \{ \langle (\textbf{ x},\textbf{ x}^q,\ldots,\textbf{ x}^{q^{n-1}})\rangle_{\F_{q^n}}\, \colon \, \textbf{ x} \in \F_{q^n}^{r-1}, \textbf{ x} \ne \textbf{0}\} \cong \PG(n(r-1)-1,q), 
\end{equation}
where ${\bf x}^{q^m}=(x_0^{q^m},x_1^{q^m},\ldots,x_{r-2}^{q^m})$ for each $m\in\{0,1,\ldots,n-1\}$. Clearly, $\bf{\Sigma}$ is the set of the points fixed by the collineation $\boldsymbol{\sigma}$ of $\Sigma^*$ of order $n$, defined as
\begin{align}\label{eq:order_n_collineation}
    \boldsymbol{\sigma} \,:\, \langle (x_{00},x_{10},\ldots,x_{r-2,0},x_{01},x_{11},\ldots,x_{r-2,1},\ldots,x_{0,n-1},x_{1,n-1},\ldots,x_{r-2,n-1}) \rangle_{\F_{q^n}} \mapsto \nonumber \\
  \langle  (x^q_{0,n-1},x^q_{1,n-1},\ldots,x^q_{r-2,n-1},x^q_{00},x^q_{10},\ldots,x^q_{r-2,0},\ldots,x^q_{0,n-2},x^q_{1,n-2},\ldots,x^q_{r-2,n-2}) \rangle_{\F_{q^n}}.
\end{align}

\begin{theorem}    \label{Fdirect} Let $L_F$ be a linear set of rank $n(r-1)$ of $\PG(r-1,q^n)$ with $F(\bf{x})$ 
as in \eqref{multivariatepolynomial}. Let $\Sigma^*=\PG(n(r-1)-1,q^n)$. Then, there exist 
\begin{itemize}
    \item [$1.$] a canonical subgeometry $\Sigma$, a generator $\sigma$ of the subgroup of $\PGaL(n(r-1),q^n)$ fixing pointwise  $\Sigma$ and $r-1$ imaginary  points $P_0,P_1,\ldots,P_{r-2}$ of $\Sigma^*$ (wrt $  \Sigma $),
    \item [$2.$] an $(n(r-1)-(r+1))$-dimensional subspace $\Gamma$ of $\Sigma^*$ fulfilling
\begin{itemize}
\item [$i)$]   $P_k^{\sigma^{j_k}} \in \Gamma$ for any $j_k \not \in I_k$, $j_k \not=0$ and $k \in \{0,1,\ldots,r-2\}$,
\item [$ii)$] any line 
 $$\langle P_\ell^{\sigma^{j_\ell}},P_{\ell}^{\sigma^{m_\ell}} \rangle $$
with $j_\ell \in I_\ell  \setminus \{m_\ell\}$ for $\ell \in \{0,1,\ldots,r-2\}$ meets $\Gamma$,
\item [$iii)$] if $r >2$, any line 
$$\langle P_i^{\sigma^{m_i}},P_{r-2}^{\sigma^{m_{r-2}}} \rangle $$
with $i \in \{0,1,\ldots,r-3\}$ meets $\Gamma$,
\end{itemize}
\item [$3.$] an $(r-1)$-dimensional subspace $\Lambda$ of $\Sigma^*$  through the points $P_i$, $i \in \{0,1,\ldots,r-2\}$,
\end{itemize}
such that
$\Gamma \cap  \Sigma = \emptyset = \Gamma \cap \Lambda $ and $\mathrm{p}_{\Gamma,\Lambda}( \Sigma)$ is equivalent to $L_F$.  
\end{theorem}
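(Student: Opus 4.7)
My approach is to exhibit all the objects explicitly in coordinates and verify the theorem by direct computation. I identify $\Sigma^*$ with $\PG(\F_{q^n}^{n(r-1)})$, with coordinates indexed by pairs $(k,j)$, $k\in\{0,\ldots,r-2\}$, $j\in\{0,\ldots,n-1\}$, and let $\mathbf{e}_{k,j}$ denote the associated standard basis vector. Since $\gcd(s,n)=1$, I take as $\Sigma$ the canonical subgeometry
\[
\Sigma=\{\langle(\mathbf{x},\mathbf{x}^{q^s},\ldots,\mathbf{x}^{q^{s(n-1)}})\rangle_{\F_{q^n}}:\mathbf{x}\in\F_{q^n}^{r-1}\setminus\{\mathbf{0}\}\},
\]
and let $\sigma$ be the generator of the pointwise stabilizer of $\Sigma$ that shifts each block of $r-1$ coordinates by one position and applies $q^s$-Frobenius. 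Setting $P_k=\langle\mathbf{e}_{k,0}\rangle$ yields $P_k^{\sigma^j}=\langle\mathbf{e}_{k,j}\rangle$, so $\mathcal{L}_{P_k,\sigma}$ is the $k$-th coordinate block (of projective dimension $n-1$) and each $P_k$ is imaginary. The target $(r-1)$-dimensional subspace is $\Lambda=\langle\mathbf{e}_{0,0},\ldots,\mathbf{e}_{r-2,0},\mathbf{e}_{r-2,m_{r-2}}\rangle_{\F_{q^n}}$, which visibly contains every $P_k$.

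For the vertex, I put $\Gamma=\PG(G)$, where $G\le\F_{q^n}^{n(r-1)}$ is the $\F_{q^n}$-space spanned by three explicit families of vectors: \textbf{(a)} $\mathbf{e}_{k,j}$ for $j\in\{1,\ldots,n-1\}\setminus I_k$ (encoding $(i)$); \textbf{(b)} $\mathbf{e}_{k,j}-(a_{j,k}/a_{m_k,k})\,\mathbf{e}_{k,m_k}$ for $j\in I_k\setminus\{m_k\}$ (encoding $(ii)$); \textbf{(c)} $\mathbf{e}_{i,m_i}-(a_{m_i,i}/a_{m_{r-2},r-2})\,\mathbf{e}_{r-2,m_{r-2}}$ for $i\in\{0,\ldots,r-3\}$, used only when $r>2$ (encoding $(iii)$). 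Each of these vectors has a distinct leading basis index among the $\mathbf{e}_{k,j}$'s, so they are linearly independent; a direct count gives $\dim G=n(r-1)-r$, whence $\dim\Gamma=n(r-1)-(r+1)$, as required. Properties $(i)$--$(iii)$ hold by construction, with the specific scalars forced by the need to reproduce $F$ under the projection.

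The decisive step is the reduction modulo $G$ of a representative $\mathbf{v}_{\mathbf{x}}=\sum_{k,j}x_k^{q^{sj}}\mathbf{e}_{k,j}$ of a generic point of $\Sigma$. Family \textbf{(a)} erases the summands with $j\notin I_k\cup\{0\}$; family \textbf{(b)} collapses each block's surviving $j\in I_k$ contribution to $(f_k(x_k)/a_{m_k,k})\,\mathbf{e}_{k,m_k}$; family \textbf{(c)} concentrates the $r-1$ remaining block contributions into $(F(\mathbf{x})/a_{m_{r-2},r-2})\,\mathbf{e}_{r-2,m_{r-2}}$. The conclusion
\[
\mathbf{v}_{\mathbf{x}} \equiv \sum_{k=0}^{r-2}x_k\,\mathbf{e}_{k,0} + \frac{F(\mathbf{x})}{a_{m_{r-2},r-2}}\,\mathbf{e}_{r-2,m_{r-2}} \pmod G
\]
lies in $\Lambda$ and is nonzero whenever $\mathbf{x}\neq\mathbf{0}$; this simultaneously gives $\Gamma\cap\Sigma=\emptyset$ and identifies $p_{\Gamma,\Lambda}(\langle\mathbf{v}_{\mathbf{x}}\rangle)$ with $\langle(\mathbf{x},F(\mathbf{x})/a_{m_{r-2},r-2})\rangle$, which differs from $\langle(\mathbf{x},F(\mathbf{x}))\rangle\in L_F$ only by the diagonal collineation scaling the first $r-1$ coordinates by $a_{m_{r-2},r-2}$. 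Hence $p_{\Gamma,\Lambda}(\Sigma)\cong L_F$. The remaining disjointness $\Gamma\cap\Lambda=\emptyset$ is immediate from the same leading-index analysis, since the generators of $\Lambda$ never appear as pivots in elements of $G$.

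The main obstacle is pinning down the precise scalars in families \textbf{(b)} and \textbf{(c)}: they must be chosen so that the reduction first assembles each $f_k(x_k)$ up to a common block factor $1/a_{m_k,k}$ and then aligns these factors into a single multiple of $F(\mathbf{x})$ along $\mathbf{e}_{r-2,m_{r-2}}$. Once these coefficients are in place, the rest of the proof is a single sweeping reduction carried out in three rounds.
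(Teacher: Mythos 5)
Your construction is correct and essentially the same as the paper's proof: the paper chooses the same canonical subgeometry (up to relabelling the blocks by $j\mapsto sj$), the same coordinate points $P_k$, the same $\Lambda=\langle P_0,\ldots,P_{r-2},P_{r-2}^{\sigma^{m_{r-2}}}\rangle$, and the very same vertex $\Gamma$ --- presented there by the equations \eqref{Gamma1} rather than by your spanning families (a)--(c), whose points are exactly the paper's $P_k^{\sigma^{j_k}}$, $Q_{j_\ell,m_\ell}$ and $R_i$ --- and leaves your final reduction modulo $G$ as an ``easy computation''. One cosmetic slip: the projected set $\{\langle(\mathbf{x},F(\mathbf{x})/a_{m_{r-2},r-2})\rangle_{\F_{q^n}}\}$ is carried to $L_F$ by scaling the last coordinate by $a_{m_{r-2},r-2}$ (equivalently the first $r-1$ coordinates by its inverse), not by $a_{m_{r-2},r-2}$ itself; the claimed equivalence is of course unaffected.
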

\begin{proof}
 Let $F(\textbf{x})$ be the multivariate polynomial with coefficients over $\F_{q^n}$ defined as in \eqref{multivariatepolynomial} with $\gcd(s,n)=1$. Let us assume that $\Sigma=\boldsymbol{\Sigma}$ and hence that $\sigma=\boldsymbol{\sigma}^s$ ( as defined  in \eqref{Sigma} and \eqref{eq:order_n_collineation}, respectively) and let  $\Gamma$ 
 be the $(n(r-1)-(r+1))$-dimensional subspace of $\Sigma^*$ with equations  
\begin{equation}\label{Gamma1}
\Gamma : \begin{cases} 
        x_{00} = x_{10} = \cdots = x_{r-2,0} = 0\\
       \sum_{i \in I_0}a_{0i}x_{0,is} + \sum_{i\in I_1}a_{1i}x_{1,is} + \cdots + \sum_{i \in I_{r-2}}a_{r-2,i}x_{r-2,is}=0
       \end{cases}
\end{equation}
and $P_k$, $k \in \{0,1,\ldots,r-2\}$, be the points having $k$-th coordinate equals to $1$ and all the others equal to zero. Clearly, each of these points is imaginary with respect to $ \Sigma$. Moreover, it is straightforward to see that 
\begin{itemize}
\item [$i)$] for any $j_k \notin I_k$ and $j_k \not= 0$, $P_{k}^{\sigma^{j_k}}\in \Gamma$. Indeed, observe that $P_{k}^{\sigma^{j_k}}$ has the $(s j_k(r-1)+k+1)$-th  $(\textrm{mod}\, n(r-1) \,)$ coordinate $1$ and all others equal to zero; 
\item [$ii)$] any line 
$$\langle P_\ell^{\sigma^{j_\ell}},P_{\ell}^{\sigma^{m_\ell}} \rangle $$
with $j_\ell \in I_\ell \setminus \{m_\ell\}$ for $\ell \in \{0,1,\ldots,r-2\}$ meets $\Gamma$ in the point
$Q_{j_\ell,m_\ell}$, with all zero coordinates except the $(sj_{\ell}(r-1)+\ell+1)$-th  $(\textrm{mod}\, n(r-1) \,)$ and the $(sm_\ell(r-1)+\ell+1)$-th $(\textrm{mod}\, n(r-1) \,)$ ones, which are equal to $a_{\ell,m_\ell}$ and $-a_{\ell,j_\ell}$, respectively; and
\item [$iii)$] if $r>2$, any line 
$$\langle P_i^{\sigma^{m_i}},P_{r-2}^{\sigma^{m_{r-2}}} \rangle $$
with $i \in \{0,1,\ldots,r-3\}$ meets $\Gamma$ in the point
$R_{i}$ with all zero coordinates except the $(sm_{i}(r-1)+i+1)$-th $(\textrm{mod}\, n(r-1) \,)$ and the $(sm_{r-2}(r-1)+r-1)$-th $(\textrm{mod}\, n(r-1) \,)$  ones, which are equal to $a_{r-2,m_{r-2}}$ and $-a_{i,m_i}$, respectively.
\end{itemize}
Finally, let $\Lambda=\langle P_0,P_1,\ldots,P_{r-2},P_{r-2}^{\sigma^{m_{r-2}}} \rangle$. Easy computations show that $\Lambda$ is an $(r-1)$-dimensional subspace of $\Sigma^*$ such that $\Gamma \cap  \Sigma = \emptyset = \Lambda  \cap \Gamma $ and $ \mathrm{p}_{\Gamma,\Lambda}(\Sigma)$ is equivalent to $L_F$. 
\end{proof}

Using the notation introduced in Theorem \ref{Fdirect} and taking into account that the points $P_i^{\sigma^j}$'s are independent, it is straightforward to check that the points
\begin{itemize}
\item [$(a)$] $P_{k}^{\sigma^{j_k}}, j_k \not \in I_k, j_k \not =0\, \textnormal{and}\, k \in\{0,1,\ldots,r-2\},$
\item [$(b)$] $Q_{j_\ell,m_\ell} \in \langle P_{\ell}^{\sigma^{j_\ell}}, P_{\ell}^{\sigma^{m_\ell}} \rangle,  j_\ell \in I_\ell \setminus \{m_\ell \}$  and $\ell \in \{0,1,\ldots,r-2\}$, 
\item [$(c)$] $R_i \in \langle P^{\sigma^{m_i}}_i,P^{\sigma^{m_{r-2}}}_{r-2} \rangle, i \in \{0,1,\ldots,r-3\}$ for $r>2$,
\end{itemize}
are independent and they span the subspace $\Gamma$. Next, we prove the following result.

\begin{lemma}\label{moveLPi}
Let $\Sigma$ be a subgeometry of $\Sigma^*=\PG(n(r-1)-1,q^n)$ and let $\sigma$ denote\textcolor{blue}{s} a collineation of order $n$ of $\Sigma^*$ such that $\mathrm{Fix}(\sigma)=\Sigma$. Let $P_0,P_1,\ldots,P_{r-2}$ and $P'_0,P'_1,\ldots,P'_{r-2}$ be imaginary points (wrt $\Sigma$) such that 
\begin{equation*}
\mathcal{L}_{P_i}=\mathcal{L}_{P'_i} \,\,\, \textrm{and} \,\,\, \langle \cL_{P_0},\cL_{P_1},\ldots,\cL_{P_{r-2}}\rangle= \Sigma^*.
\end{equation*}
Then, there exists a collineation $\varphi \in \mathrm{LAut}(\Sigma)$ such that $P^{\varphi}_i=P'_i$ for $i \in \{0,1,\ldots,r-2\}$.    
\end{lemma}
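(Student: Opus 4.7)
The plan is to build $\varphi$ block by block, using the decomposition of the underlying $\F_{q^n}$-vector space $V$ of $\Sigma^*$ induced by the subspaces $\cL_{P_i}$. First I would observe that each $\cL_{P_i}$ has projective dimension $n-1$ (by imaginarity of $P_i$), so the underlying vector spaces $V_i$ have dimension $n$; the total dimension $(r-1)n$ matches $\dim V$, so the spanning hypothesis forces $V = V_0 \oplus V_1 \oplus \cdots \oplus V_{r-2}$. Moreover, each $V_i$ is stable under any semilinear lift $\hat\sigma$ of $\sigma$, because $\hat\sigma$ cyclically permutes the generators $P_i, P_i^\sigma, \ldots, P_i^{\sigma^{n-1}}$ of $\cL_{P_i}$. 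After rescaling the lift by a suitable constant (adjusting the norm), one may assume $\hat\sigma^n = \mathrm{id}$.

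For each $i$, I would fix representatives $v_i$ of $P_i$ and $w_i$ of $P'_i$ inside $V_i$ (using the hypothesis $\cL_{P_i}=\cL_{P'_i}$). Imaginarity of $P_i$ makes $\{v_i, \hat\sigma v_i, \ldots, \hat\sigma^{n-1} v_i\}$ an $\F_{q^n}$-basis of $V_i$, so I can define $\varphi_i: V_i \to V_i$ as the $\F_{q^n}$-linear map determined by $\hat\sigma^k v_i \mapsto \hat\sigma^k w_i$. Imaginarity of $P'_i$ guarantees the images are also linearly independent, so $\varphi_i$ is a bijection. A direct basis-level check — handling the Frobenius twist of $\hat\sigma$ on scalars via the $\F_{q^n}$-linearity of $\varphi_i$ — shows that $\varphi_i$ commutes with $\hat\sigma|_{V_i}$.

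Then I would assemble the pieces: set $\hat\varphi := \bigoplus_i \varphi_i \in \GL_{\F_{q^n}}(V)$, which is well-defined by the direct-sum decomposition. Because every $V_i$ is $\hat\sigma$-stable and each restriction commutes with $\hat\sigma|_{V_i}$, the global map $\hat\varphi$ commutes with $\hat\sigma$. Passing to projectivities yields $\varphi \in \PGL(n(r-1),q^n)$ commuting with $\sigma$, whence $\varphi \in \mathrm{LAut}(\Sigma)$; and $\varphi(P_i) = \langle \varphi_i(v_i)\rangle_{\F_{q^n}} = \langle w_i \rangle_{\F_{q^n}} = P'_i$ by construction.

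The main technical point will be the semilinear commutation check for each $\varphi_i$: one must verify $\varphi_i(\hat\sigma u) = \hat\sigma(\varphi_i u)$ on arbitrary $u \in V_i$ rather than only on the chosen basis. After expanding $u = \sum_k \alpha_k \hat\sigma^k v_i$, both sides equal $\sum_k \alpha_k^q \hat\sigma^{k+1} w_i$, so the identity follows. All other ingredients — the direct-sum step, the invertibility of $\varphi_i$ from imaginarity of $P'_i$, and the passage from $\hat\varphi$ to $\varphi$ — are straightforward once the commutation is in place.
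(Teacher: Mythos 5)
Your proof is correct, and its overall skeleton coincides with the paper's: both exploit the decomposition $V=\oplus_i W_i$ induced by the subspaces $\mathcal{L}_{P_i}$ (forced to be direct by imaginarity and the spanning hypothesis), build a map on each block sending $P_i$ to $P'_i$ and commuting with $\sigma$ there, and then glue by taking the direct sum and checking commutation blockwise. The difference lies in how the block maps $\varphi_i$ are obtained: the paper simply invokes \cite[Proposition 3.1]{Bonoli-Polverino}, which provides, inside each $\mathcal{L}_i$ with its induced subgeometry $\Sigma_i=\mathcal{L}_i\cap\Sigma$, a collineation of $\mathrm{LAut}(\Sigma_i)$ mapping $P_i$ to $P'_i$; you instead construct $\varphi_i$ explicitly, fixing a semilinear lift $\hat\sigma$ normalized so that $\hat\sigma^n=\mathrm{id}$ and sending the basis $\hat\sigma^k v_i$ to $\hat\sigma^k w_i$, with the commutation verified by a direct computation on coordinates. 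Your route is therefore self-contained (it essentially reproves the needed case of the Bonoli--Polverino transitivity result), at the cost of a little bookkeeping; the paper's is shorter but rests on the external reference. The only point you should make explicit is the normalization step: a lift $\hat\sigma$ of $\sigma$ satisfies $\hat\sigma^n=\lambda\,\mathrm{id}$ with $\lambda\in\F_q^*$ (since $\lambda\,\mathrm{id}$ commutes with the $q$-semilinear $\hat\sigma$, forcing $\lambda^q=\lambda$), and surjectivity of $\mathrm{N}_{q^n/q}$ then yields the rescaling you use; in fact even without normalizing, $\hat\sigma^n v_i=\lambda v_i$ and $\hat\sigma^n w_i=\lambda w_i$ hold simultaneously, so your basis computation closes up in either case.
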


\begin{proof}
Set $\mathcal{L}_i:=\cL_{P_i}=\cL_{P'_i}$. By hypothesis, for each $i  \in  \{0,1,\ldots,r-2   \}$,
$$\Sigma_i:=\mathcal{L}_i \cap \Sigma = \PG(V_i)  \cong\PG(n-1,q)$$ 
and $\mathcal{L}_i=\PG(W_i)\cong \PG(n-1,q^n)$ where $W_i= \langle V_i\rangle_{\F_{q^n}}$. Moreover, note that $\mathrm{Fix}(\sigma|_{{\mathcal{L}_i}})=\Sigma_i$. By applying \cite[Proposition 3.1]{Bonoli-Polverino}, we have that there exists a collineation $\varphi_i \in \mathrm{LAut}(\Sigma_i)$ of $\cL_i$ fixing $\Sigma_i$\textcolor{red}{,} and mapping $P_i$ into $P'_i$. In the remaining part of the proof we will use the same symbol $\varphi_i$ to denote both the collineation and its underlying linear map; i.e., we will write $\varphi_i: W_i \longrightarrow W_i$ for any $i \in \{0,1,\ldots,r-2\}$.  

Since $\Sigma^* =\PG(\oplus_{i=0}^{r-2} W_i)$ and $\Sigma= \PG(\oplus_{i=0}^{r-2} V_i)$, we may
consider the map $\varphi \in \mathrm{PGL}(n(r-1),q^n)$ with  the following associated linear map  
\begin{equation*} w_0+ w_1+w_2+\ldots + w_{r-2} \in \oplus_{i=0}^{r-2} W_i \, \longrightarrow \,  \varphi_0(w_0)+\varphi_1(w_1)+\varphi_2(w_2)+\ldots + \varphi_{r-2}(w_{r-2}) \in \oplus_{i=0}^{r-2} W_i.
\end{equation*}
First, we prove that $\varphi \in \mathrm{LAut}(\Sigma)$, i.e., $\sigma \varphi=\varphi \sigma$. Let $P= \langle w \rangle_{\F_{q^n}}$ with $w= w_0+ w_1+\ldots + w_{r-2}$ where $w_i \in W_i$, then 
\begin{equation*}
\begin{split}
   \sigma(\varphi({\langle w \rangle_{\F_{q^n}}}))&=\sigma(\langle \varphi_0(w_0)+\varphi_1(w_1)+\varphi_2(w_2)+\ldots + \varphi_{r-2}(w_{r-2}) \rangle_{\F_{q^n}})\\
&=\langle\sigma(\varphi_0(w_0))+\sigma(\varphi_1(w_1))+\sigma(\varphi_2(w_2))+\ldots+\sigma(\varphi_{r-2}(w_{r-2}))\rangle_{\F_{q^n}}\\
&=\langle\sigma_{|_{W_0}}(\varphi_0(w_0))+\sigma_{|_{W_1}}(\varphi_1(w_1))+\sigma_{|_{W_{2}}}(\varphi_{2}(w_{2}))+\ldots+\sigma_{|_{W_{r-2}}}(\varphi_{r-2}(w_{r-2}))\rangle_{\F_{q^n}}\\
&=\langle \varphi_0(\sigma(w_0))+\varphi_1(\sigma(w_1))+\varphi_{2}(\sigma(w_{2}))+\ldots+\varphi_{r-2}(\sigma(w_{r-2}))\rangle_{\F_{q^n}}\\
&=\varphi(\langle \sigma(w_0)+\sigma(w_1)+\sigma(w_2)+\ldots+\sigma(w_{r-2})\rangle_{\F_{q^n}})\\
&=\varphi(\sigma(\langle w \rangle _{\F_{q^n}}))
\end{split}
\end{equation*}
where again we have denoted the semilinear map  associated with $\sigma$ and $\varphi$ by the same letters, respectively.
Finally, it is routine to check that $P_i^\varphi=P'_i$, $i \in \{0,1,\ldots,r-2\}$. Thus, the result follows.
\end{proof}
We now prove a generalization of \cite[Proposition 3.1]{Bonoli-Polverino} which will play an important role in the proof of Theorem \ref{GPT}.  
\begin{proposition}\label{prop:transitivity}
Let $\Sigma$ be a subgeometry of $\Sigma^*=\PG(n(r-1)-1,q^n)$. Let $\sigma$ denote a collineation of order $n$ of $\Sigma^*$ such
that $\mathrm{Fix}(\sigma)=\Sigma$. Then, the group $\mathrm{LAut(\Sigma)}$ acts $(r-1)$-transitively on sets of $r-1$ independent points $P_0,P_1,\ldots,P_{r-2}$ such that $\langle \mathcal{L}_{P_0}, \mathcal{L}_{P_1}, \dots, \mathcal{L}_{P_{r-2}}\rangle=\Sigma^* $.
\end{proposition}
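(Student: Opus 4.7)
The plan is to reduce to Lemma \ref{moveLPi} by first building an element of $\mathrm{LAut}(\Sigma)$ that carries the $\cL_{P_i}$'s onto the $\cL_{P'_i}$'s, and then moving the points within those $(n-1)$-subspaces using the lemma.

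Let $(P_0,\ldots,P_{r-2})$ and $(P'_0,\ldots,P'_{r-2})$ be two configurations satisfying the hypothesis. Write $\cL_i:=\cL_{P_i}=\PG(W_i)$ and $\Sigma_i:=\cL_i\cap\Sigma=\PG(V_i)$, with $W_i=\langle V_i\rangle_{\F_{q^n}}$, and analogously $\cL'_i, \Sigma'_i, W'_i, V'_i$ on the primed side. Since each $P_i$ is imaginary, $\dim_{\F_{q^n}}W_i=n=\dim_{\F_q}V_i$. The spanning hypothesis, combined with the fact that $\dim\Sigma^*=n(r-1)-1$ matches the sum of the dimensions of the $\cL_i$'s plus $r-2$, forces $V=W_0\oplus\cdots\oplus W_{r-2}$; restricting to the $\F_q$-structure of $\Sigma$ gives the corresponding $\F_q$-decomposition $V_0\oplus\cdots\oplus V_{r-2}$. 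The same holds for the primed tuple.

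Next I would construct $\psi\in\mathrm{LAut}(\Sigma)$ with $\cL_i^{\psi}=\cL'_i$ for every $i$. Fix $\F_q$-bases of $V_0,\ldots,V_{r-2}$ and of $V'_0,\ldots,V'_{r-2}$, and let $\psi_0$ be the $\F_q$-linear isomorphism sending the first basis to the second block by block. Its unique $\F_{q^n}$-linear extension $\psi$ to $V=\bigoplus W_i=\bigoplus W'_i$ stabilises $\Sigma$ (it permutes the $\F_q$-rational points) and commutes with $\sigma$, since the Frobenius twist defining $\sigma$ commutes with any $\F_{q^n}$-linear extension of an $\F_q$-linear map; hence $\psi\in\mathrm{LAut}(\Sigma)$, and by construction $W_i^{\psi}=W'_i$, i.e.\ $\cL_i^{\psi}=\cL'_i$. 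Because $\psi$ commutes with $\sigma$, $\cL_{P_i^{\psi}}=\cL_i^{\psi}=\cL'_i=\cL_{P'_i}$ and the points $P_i^{\psi}$ remain imaginary. Now Lemma \ref{moveLPi}, applied to the tuples $(P_0^{\psi},\ldots,P_{r-2}^{\psi})$ and $(P'_0,\ldots,P'_{r-2})$, provides $\varphi\in\mathrm{LAut}(\Sigma)$ with $(P_i^{\psi})^{\varphi}=P'_i$; the composition $\psi\varphi$ is the desired element.

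The main obstacle is the construction of $\psi$, which amounts to the transitivity of $\mathrm{LAut}(\Sigma)$ on ordered $(r-1)$-tuples of complementary $(n-1)$-dimensional subspaces of $\Sigma$. Once one identifies $\mathrm{LAut}(\Sigma)$ with (a group containing) $\mathrm{PGL}(n(r-1),q)$ acting on $\Sigma\cong\PG(n(r-1)-1,q)$ via $\F_{q^n}$-linear extension, this reduces to the classical fact that $\mathrm{PGL}$ acts transitively on ordered direct-sum decompositions into subspaces of fixed dimensions; the only subtlety to check is that the extension commutes with $\sigma$, which I handled above. Everything else is routine bookkeeping on top of Lemma \ref{moveLPi}.
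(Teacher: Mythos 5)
Your proof is correct and follows essentially the same route as the paper's: both arguments first produce a collineation $\psi\in\mathrm{LAut}(\Sigma)$ carrying each $\cL_{P_i}$ onto $\cL_{P'_i}$ and then conclude with Lemma \ref{moveLPi}. The only difference is cosmetic: where the paper invokes transitivity of $\mathrm{LAut}(\Sigma)$ on sets of $n(r-1)$ points of $\Sigma$ in general position (obtained from the blocks $\cL_{P_i}\cap\Sigma$), you build the same map explicitly from $\F_q$-bases of the subspaces $V_i$, extend it $\F_{q^n}$-linearly, and verify the commutation with $\sigma$ by hand.
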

\begin{proof}

Let $P_0, P_1,\dots, P_{r-2}$ be the independent points such that $\langle \mathcal{L}_{P_0}, \mathcal{L}_{P_1}, \dots, \mathcal{L}_{P_{r-2}}\rangle=\Sigma^* $. Therefore, any point $P_i$, $i \in \{0,1,\ldots,r-2\}$, is an imaginary point of $\Sigma^*$ (wrt $\Sigma$). So, for any $i \in \{0,1,\ldots,r-2\}$, there exist $n$ points in $\Sigma \cap \mathcal{L}_{P_i}$, say $P_{i0}, P_{i1}, \dots, P_{i(n-1)}$, such that:  $$\{P_{ij}\in \Sigma : i\in\{0,1,\dots,r-2\} \text{ and }j\in\{0,1,\dots,n-1\}\}$$ are in general position in $\Sigma$. Let $Q_0, Q_1,\dots, Q_{r-2}$ be $(r-1)$ points such that $\langle \mathcal{L}_{Q_i} \colon i=0,1,\ldots,r-2 \rangle=\Sigma^* $. So, in the same way, we get a set $$\{Q_{ij}\in \Sigma : i\in\{0,1,\dots,r-2\} \text{ and }j\in\{0,1,\dots,n-1\}\}$$ of points of $\Sigma$ in general position, as well. 
Since  $\mathrm{LAut}(\Sigma)$ acts transitively on subset of $\Sigma$ of order $n(r-1)$ which are in general position, there exists a collineation $\psi\in\mathrm{LAut}(\Sigma)$ such that $P_{ij}^{\psi}=Q_{ij}$ for any $i \in \{0,1,\ldots,r-2\}$ and $j \in \{0,1,\ldots,n-1\}$. 
Moreover, it holds that 
\begin{equation*}
\begin{split}
    \cL_{P_i}^{\psi}&=  ( \langle P_{i0},P_{i1},\ldots, P_{i(n-1)}\rangle)^{\psi}=\langle P_{i0}^\psi,P_{i1}^\psi,\ldots, P_{i(n-1)}^\psi \rangle\\
    &=\langle Q_{i0},Q_{i1},\ldots, Q_{i(n-1)} \rangle = \cL_{Q_i}.
    \end{split}
\end{equation*}
Now, if $P_i^{\psi}=Q_i$, we get the result. Otherwise, assume $P_i^\psi=R_i$, for some $R_0,R_1,\ldots,R_{r-2}$. Since $\mathcal{L}_{R_i}=\mathcal{L}_{Q_i}$, by Lemma \ref{moveLPi} there exists $\varphi \in \mathrm{LAut}(\Sigma)$ such that $R_i^{\varphi}=Q_i$ and so $P_i^{\psi \varphi}:=\varphi(\psi(P_i))=Q_i$. This completes the proof. 
\end{proof}

We are now in the position to prove the following inverse of Theorem \ref{Fdirect}.
\begin{theorem}
    {\label{GPT}}
   Let $r,n \geq 2$, $I_j \subseteq \{1,\ldots,n-1\},$ with $j \in \{0,1,\ldots,r-2\}$.
   Let $\Sigma$ be a canonical subgeometry of $\Sigma^*=\PG(n(r-1)-1,q^n)$ and consider $\Gamma$ and $\Lambda$ subspaces of $\Sigma^*$ with dimensions $(n(r-1)-(r+1))$ and $(r-1)$, respectively, such that $\Gamma \cap \Sigma = \emptyset = \Lambda  \cap \Gamma $.\\
    Assume $L= \mathrm{p}_{\Gamma,\Lambda}(\Sigma)$ is an $(r-1,n(r-1)-2)_q$-evasive linear set of rank $n(r-1)$.\\
    If there exist $r-1$ points $P_0, P_1,\ldots, P_{r-2}$ such that $\Gamma$ is spanned by points defined as in $(a)$, $(b)$ and $(c)$ where $\sigma$ is a generator of the subgroup of $ \PGaL(n(r-1),q^n)$ fixing $\Sigma$ pointwise.
    Then, 
     \begin{enumerate}
         \item $P_0, P_1,\ldots, P_{r-2}$ are imaginary points (wrt $\Sigma$), and  
         \item $L \cong L_F$ as in (\ref{linearset}) where $\mathrm{supp }_{q^s}f_j(x_j)=I_j$, for some integer $1 \leq s \leq n-1$  with $\gcd(s,n)=1$ and $j=0 ,\ldots,r-2$.
     \end{enumerate}
\end{theorem}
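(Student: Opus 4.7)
The overall plan is to invert the construction of Theorem \ref{Fdirect}. First I would extract from the spanning description of $\Gamma$ the fact that $P_0,\ldots,P_{r-2}$ are imaginary with respect to $\Sigma$. Then I would use Proposition \ref{prop:transitivity} to normalize the configuration so that $\Sigma=\boldsymbol{\Sigma}$, $\sigma=\boldsymbol{\sigma}^s$, and the $P_i$'s are the standard unit points, at which stage the coefficients of $F$ can be read off the expressions of the $Q_{j_\ell,m_\ell}$ and $R_i$, and Theorem \ref{Fdirect} closes the loop.

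For Claim 1, a direct count shows that the points listed in (a), (b), and (c) number exactly $n(r-1)-r$, which equals the $\F_{q^n}$-dimension of the vector space underlying $\Gamma$; since they span $\Gamma$ by assumption, they are linearly independent. Next, I would adjoin the $r$ points $P_0,\ldots,P_{r-2}$ and $P_{r-2}^{\sigma^{m_{r-2}}}$ and chase the collinearities: relation (c) places each $P_i^{\sigma^{m_i}}$ ($i<r-2$) in $\langle \Gamma, P_{r-2}^{\sigma^{m_{r-2}}}\rangle$, and relation (b) then places each $P_\ell^{\sigma^{j_\ell}}$ with $j_\ell\in I_\ell$ in the same span. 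Combined with the points of (a) and the $P_k$'s themselves, this shows that $\hat{W}:=\langle \Gamma, P_0,\ldots,P_{r-2}, P_{r-2}^{\sigma^{m_{r-2}}}\rangle$ contains every $P_k^{\sigma^{j_k}}$ with $k\in\{0,\ldots,r-2\}$ and $j_k\in\{0,\ldots,n-1\}$. The hypothesis $\Gamma\cap\Lambda=\emptyset$ together with the $(r-1,n(r-1)-2)_q$-evasiveness of $L$ is then used to rule out that the adjoined $r$ points are linearly dependent modulo $\Gamma$, forcing $\hat W=\Sigma^*$. Hence the $n(r-1)$ points $P_k^{\sigma^{j_k}}$ form an $\F_{q^n}$-basis of $\Sigma^*$, which is precisely the assertion that each $P_k$ is imaginary and $\langle \mathcal{L}_{P_0},\ldots,\mathcal{L}_{P_{r-2}}\rangle=\Sigma^*$.

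For Claim 2, the equivalence of canonical subgeometries yields a collineation sending $(\Sigma,\sigma)$ to $(\boldsymbol{\Sigma},\boldsymbol{\sigma}^s)$ for some integer $s$ with $\gcd(s,n)=1$ (the only ambiguity in the generator being precisely this exponent). Once $P_0,\ldots,P_{r-2}$ are imaginary with $\langle \mathcal{L}_{P_0},\ldots,\mathcal{L}_{P_{r-2}}\rangle=\Sigma^*$, Proposition \ref{prop:transitivity} supplies a further collineation in $\mathrm{LAut}(\boldsymbol{\Sigma})$ taking them to the standard unit points. In the resulting normalized frame, the points of type (a) are coordinate vectors, while each $Q_{j_\ell,m_\ell}$ (respectively $R_i$) is a nonzero $\F_{q^n}$-combination of two specific coordinate vectors. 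Reading off these combinations in the form appearing at items (ii) and (iii) of the proof of Theorem \ref{Fdirect} defines scalars $a_{\ell,i}\in \F_{q^n}^*$ for every $\ell\in\{0,\ldots,r-2\}$ and $i\in I_\ell$; since the $Q_{j_\ell,m_\ell}$ and $R_i$ lie on proper lines of $\Gamma$ and are not proportional to their endpoints (which are not in $\Gamma$), both ratios are nonzero, yielding $\mathrm{supp}_{q^s}f_\ell=I_\ell$ for $f_\ell(x_\ell)=\sum_{i\in I_\ell}a_{\ell,i}x_\ell^{q^{si}}$. Setting $F(\mathbf{x})=\sum_\ell f_\ell(x_\ell)$, Theorem \ref{Fdirect} produces a center $\Lambda^*$ with $\mathrm{p}_{\Gamma,\Lambda^*}(\boldsymbol{\Sigma})\cong L_F$, and since any two centers complementary to $\Gamma$ give projectively equivalent images, we conclude $L\cong L_F$.

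I expect the principal obstacle to be establishing the $\F_{q^n}$-independence of $P_0,\ldots,P_{r-2},P_{r-2}^{\sigma^{m_{r-2}}}$ modulo $\Gamma$ in Claim 1, as this is precisely the place where the $(r-1,n(r-1)-2)_q$-evasive hypothesis must be exploited. Without a bound on the weight assigned by $L$ to $\F_{q^n}$-hyperplanes of $\Lambda$, a nontrivial relation of the form $\sum\lambda_kP_k+\lambda_{r-1}P_{r-2}^{\sigma^{m_{r-2}}}\in\Gamma$ could coexist with the stated spanning description of $\Gamma$, producing a non-imaginary configuration with the same combinatorial structure.
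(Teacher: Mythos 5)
Your Claim 2 follows the paper's route (normalize via Proposition \ref{prop:transitivity} so that $\Sigma=\boldsymbol{\Sigma}$, $\sigma=\boldsymbol{\sigma}^s$ and the $P_i$ are unit points, then read the coefficients $a_{\ell,i}$ off the points of type $(b)$ and $(c)$ and recognize the equations \eqref{Gamma1} of $\Gamma$), and your reformulation of Claim 1 is equivalent to the paper's: since $\hat W=\langle\Gamma,P_0,\ldots,P_{r-2},P_{r-2}^{\sigma^{m_{r-2}}}\rangle=\langle\mathcal{L}_{P_0},\ldots,\mathcal{L}_{P_{r-2}}\rangle=:\mathcal{L}$, showing $\hat W=\Sigma^*$ is exactly showing $\dim\mathcal{L}=n(r-1)-1$. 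But there is a genuine gap precisely at that point: you assert that ``$\Gamma\cap\Lambda=\emptyset$ together with evasiveness is then used to rule out'' a dependence of the adjoined points modulo $\Gamma$, yet you never produce the argument, and in your last paragraph you concede you do not see how to obtain a weight bound on proper subspaces of $\Lambda$. That bound is available, and it is the heart of the paper's proof: suppose $\dim\mathcal{L}=n(r-1)-(r+1)+t$ with $1\le t\le r-1$. Since $\Gamma\subset\mathcal{L}$ and $\Gamma\cap\Lambda=\emptyset$, Grassmann gives $\dim(\Lambda\cap\mathcal{L})\ge t-1$. Moreover $\mathcal{L}$ is $\sigma$-invariant (it is spanned by the $\sigma$-orbits of the $P_i$), so $\mathcal{L}$ is a subspace of $\Sigma$ and $\mathrm{rk}(\mathcal{L}\cap\Sigma)=\dim\mathcal{L}+1$; since $\Gamma\subset\mathcal{L}$, the whole of $\mathcal{L}\cap\Sigma$ projects into $\Lambda\cap\mathcal{L}$, whence $\mathrm{wt}_L(\Lambda\cap\mathcal{L})=n(r-1)-(r+1)+t+1$. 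This contradicts the fact that an $(r-1,n(r-1)-2)_q$-evasive linear set is $(t,n(r-1)-(r+1)+t)_q$-evasive (\cite[Proposition 2.6]{Bartoli_Csajbok_Marino_Trombetti}). In other words, the weight bound you say is missing is exactly the hereditary evasiveness property plus the observation that $\sigma$-invariance of $\mathcal{L}$ converts its dimension into a rank of $\mathcal{L}\cap\Sigma$; without these two ingredients your Claim 1 does not close, and Claim 2 depends on it.

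A secondary flaw in your sketch: appealing to $\Gamma\cap\Lambda=\emptyset$ to exclude a relation $\sum_k\lambda_kP_k+\lambda_{r-1}P_{r-2}^{\sigma^{m_{r-2}}}\in\Gamma$ is not meaningful as stated, because in Theorem \ref{GPT} (unlike Theorem \ref{Fdirect}) the center $\Lambda$ is an arbitrary $(r-1)$-subspace disjoint from $\Gamma$ and is not assumed to contain $P_0,\ldots,P_{r-2},P_{r-2}^{\sigma^{m_{r-2}}}$; the disjointness only enters through the Grassmann estimate above. Finally, your justification that all coefficients $a_{\ell,i}$, $i\in I_\ell$, are nonzero (``the $Q_{j_\ell,m_\ell}$ are not proportional to their endpoints, which are not in $\Gamma$'') is itself unproven, though the paper is equally terse on this point, so I would not count it as the main defect.
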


\begin{proof}
Proving point {\it 1.} of the statement, is equivalent to show that $\dim{\cal L}=n(r-1)-1$, where $\mathcal{L}=\langle \mathcal{L}_{P_0},\mathcal{L}_{P_1},\ldots,\mathcal{L}_{P_{r-2}} \rangle$.
By our hypothesis, $\Gamma\subset {\cal L}$ and hence $$n(r-1)-(r+1)<\dim (\mathcal{L})\leq n(r-1)-1.$$
Assume that $\dim {\cal L} < n(r-1)-1$. Precisely, let
$\dim \mathcal{L}=n(r-1)-(r+1)+t$, where $t\in\{1,2,\dots,r-1\}$. Then, by Grassman's Formula  $\dim(\Lambda \cap \mathcal{L}) = t-1.$ Since $\mathcal{L}$ is fixed by $\sigma$, we get $$ \mathrm{wt}_L(\Lambda \cap \mathcal{L}) = \mathrm{rk} (\mathcal{L} \cap \Sigma)=\dim  \mathcal{L}+1=n(r-1)-(r+1)+t+1.$$ This means that we have a $t$-dimensional $\mathbb{F}_{q^n}$-subspace of $\F_{q^n}^r$, which meets an  $\F_q$-linear subspace in an $\F_q$-linear subspace with dimension $(n(r-1)-(r+1)+t+1)$ over $\F_q$. Since by \cite[Proposition 2.6]{Bartoli_Csajbok_Marino_Trombetti} an $(r-1,n(r-1)-2)_q$-evasive is also $(t, n(r-1)-(r+1)+t)_q$-evasive, we contradict the assumption. Therefore, $\dim(\mathcal{L})=n(r-1)-1$. By Proposition \ref{prop:transitivity}, let $\Sigma$ as in \eqref{Sigma}, then there exists an integer $ 1 \leq s \leq n-1$ with $\gcd(s,n)=1$ such that $\sigma=\boldsymbol{\sigma}^s$. Then, we may assume that any $P_k$, $k=0,1,\ldots,r-2$, is a point with homogeneous coordinates equal to 1 in the $k$-th  position and 0 in all others. Furthermore, since $\Gamma$ is spanned by points defined as in $(a)$, $(b)$ and $(c)$, it turns out to have equations
\begin{equation*}
\Gamma : \begin{cases} 
        x_{00} = x_{10} = \cdots = x_{r-2,0} = 0\\
       \sum_{i \in I_0}a_{0i}x_{0,is} + \sum_{i\in I_1}a_{1i}x_{1,is} + \cdots + \sum_{i \in I_{r-2}}a_{r-2,i}x_{r-2,is}=0
       \end{cases}
\end{equation*}
 with some $a_{ji} \in \F^*_{q^n}$, $j=0,\ldots,r-2$ and $i \in I_j$. Consequently, $L \cong L_F$, where  $F(\textbf{x})=\sum_{j=0}^{r-2}\sum_{i\in I_j}a_{ji}x^{q^{si}}$.
\end{proof}

\section{Maximum scattered linear sets of $\PG(1,q^n)$}

In this section, we will focus on maximum scattered linear sets of the projective line $\PG(1,q^n)$. These are covered as a special case in the results of the previous section providing $r=2$. As expressed in Section \ref{sec: intro}, our aim is to characterize the families of these linear sets which were introduced from 2018 onwards (see \cite{Csajbok_Marino_Zullo, Csajbok-Marino-Polverino-Zanella, longobardi_marino_trombetti_zhou, longobardi_zanella, neri_santonastaso_zullo, SmaZaZu}) by means of certain properties of their projection vertices.

For this purpose, we will start by recalling some characterisation results obtained in this direction, regarding the two classical and up to date most investigated examples: pseudoregulus and Lunardon-Polverino type scattered linear sets of the line.

Let $\ell$ be an integer such that $\ell \mid n$. In the following, we will indicate by 
\begin{equation*}
   \mathrm{N}_{q^n/q^\ell}(x)=x^{\frac{q^{n}-1}{q^\ell-1}} \quad \textnormal{and} 
\quad \mathrm{Tr}_{q^n/q^\ell}(x)=\sum_{i=0}^{n/\ell-1}  x^{q^{i \ell }}
, 
\end{equation*} the \textit{norm} and \textit{trace} of $\F_{q^n}$ over $\F_{q^\ell}$ of an element $x \in \F_{q^n}$, respectively. 

As noted in \cite{Csajbok_Zanella}, two $\F_q$-subspaces of $\F_{q^n}^2$ defining the same scattered linear set in $\PG(1, q^n)$ are not necessarily on the same orbit under the action of $\GaL(2, q^n)$.  This leads in \cite{classes} to the introduction of the notion of $\mathcal{Z}(\GaL)$-class and $\GaL$-class of a linear set of rank $n$ in $\PG(1,q^n)$. Precisely,
\begin{definition}\cite[Definition 2.4]{classes}
Let $L_U$ be an $\F_q$-linear set of $\PG(1, q^n)$ of rank $n$ with maximum field of linearity $\F_q$, i.e., $L_U$ is not an $\F_{q^u}$-linear set
for any $u > 1$. Then $L_U$ has $\mathcal{Z}(\GaL)$-\textit{class} $z$, if $z$ is the largest integer
such that there exist $\F_q$-subspaces $U_1, U_2,\ldots, U_z$ of $\F_{q^n}^2$ with 
\begin{enumerate}
    \item $L_{U_i} = L_U$ for
$i \in \{1, 2, \ldots, z\}$, and
\item there is no $\lambda 
 \in \F_{q^n}^*$ such that $U_i= \lambda U_j$  for each
$i \neq  j$ and  $i, j \in \{1, 2, . . . , z\}$.
\end{enumerate} 
\end{definition}
 Similarly, the notion of $\GaL$-class of an $\F_q$-linear set was defined.

\begin{definition} \cite[Definition 2.5]{classes}. Let $L_U$ be an $\F_q$-linear set of $\PG(1, q^n)$ of rank $n$ with maximum field of linearity $\F_q$. Then $L_U$ is of $\GaL$-\textit{class} $c$, if $c$ is the largest integer
such that there exist $\F_q$-subspaces $U_1,U_2,\ldots,U_c$ of $\F_{q^n}^2$ with
\begin{enumerate}
    \item  $L_{U_i} = L_U$ for $i\in \{1, 2, \ldots, c\}$, and
    \item there is no $\kappa  \in  \GaL(2, q^n)$ such that $U_i = U_j^\kappa$ for each $i \neq j$, $i, j \in \{1, 2,\ldots, c\}$.
\end{enumerate} 
\end{definition}

In the following, we will denote the $\mathcal{Z}(\GaL)$-class and the $\GaL$-class of a linear set $L=L_U$ by $c_{\mathcal{Z}(\GaL)}(L)$ and $c_{\GaL}(L)$, respectively. These integers are invariant under the action of the group $\PGaL(2,q^n)$ on $L$, see \cite[Proposition 2.6]{classes}, and it is clear that $c_{\GaL}(L)  \leq c_{\mathcal{Z}(\GaL)}(L)$. The linear set $L$ is called $simple$ if its  $\GaL$-class is  one. Also, in \cite{Zanella-Zullo}, the $\GaL$-class of a linear set was rephrased via a group action property on its projection vertices, see \cite[Section 5.2]{classes}, \cite[Theorem 6 and 7]{Csajbok_Zanella} and \cite[Theorem 3.4]{Zanella-Zullo}. More precisely, the following was proved

 \begin{theorem}  \label{geoclass} 
 Let $\Sigma$ be a canonical subgeometry and  $\Lambda$  be a line of $\Sigma^*=\PG(n-1,q^n)$. The  $\GaL$-class of a linear set $L \subset \Lambda$ is the number of orbits, under the action of $\Aut(\Sigma)$ of $(n - 3)$-subspaces $\Gamma$ of $\Sigma^*$ disjoint from $\Sigma$ and from the line $\Lambda$ such that $\mathrm{p}_{\Gamma,\Lambda}(\Sigma)$ is equivalent to $L$.
\end{theorem}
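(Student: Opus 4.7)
The plan is to construct an explicit bijection between the set of $\GaL(2,q^n)$-orbits of $\F_q$-subspaces $U \le \F_{q^n}^2$ with $L_U = L$ (whose cardinality is $c_{\GaL}(L)$ by definition) and the set $\mathcal{V}$ of $\Aut(\Sigma)$-orbits of admissible vertices $\Gamma$ with $\mathrm{p}_{\Gamma, \Lambda}(\Sigma) \cong L$. Once this is done the theorem follows, using the invariance of $c_{\GaL}$ under $\PGaL(2, q^n)$ (\cite[Proposition 2.6]{classes}) to pass freely between ``$L_U = L$'' and ``$L_U \cong L$'' on the subspace side.

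Given $U$ with $L_U = L$, the theorem of Lunardon and Polverino realises $L$ as $\mathrm{p}_{\Gamma_U, \Lambda}(\Sigma_U)$ from an $(n-3)$-vertex $\Gamma_U$ of a canonical subgeometry $\Sigma_U$ of $\Sigma^*$. Since $\mathrm{PGL}(n, q^n)$ acts transitively on canonical subgeometries of $\Sigma^*$ disjoint from $\Lambda$, one may assume $\Sigma_U = \Sigma$ and obtain $\Phi(U) := \Gamma_U \in \mathcal{V}$. Conversely, to any $\Gamma \in \mathcal{V}$ one associates $U_\Gamma := \pi_\Gamma(W) \le \F_{q^n}^2$, where $W$ is the $\F_q$-subspace underlying $\Sigma$ and $\pi_\Gamma$ denotes the $\F_{q^n}$-linear projection onto $\Lambda$; then $L_{U_\Gamma} = \mathrm{p}_{\Gamma, \Lambda}(\Sigma)$, so $U_\Gamma$ (after a $\PGaL(2,q^n)$-adjustment) defines $L$.

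The core of the argument is the equivariance: $U_1, U_2$ lie in a common $\GaL(2,q^n)$-orbit if and only if $\Gamma_{U_1}, \Gamma_{U_2}$ lie in a common $\Aut(\Sigma)$-orbit. In one direction, a $\GaL$-map $\kappa: U_1 \to U_2$ extends, by the functoriality of the Lunardon--Polverino construction (concretely, acting diagonally on an $\F_{q^n}$-vector-space extension and checking commutation with the generator $\sigma$ of the subgroup of $\PGaL(n,q^n)$ fixing $\Sigma$ pointwise), to a collineation $\tilde\kappa \in \Aut(\Sigma)$ stabilising $\Lambda$ setwise and inducing $\kappa$ there; by construction $\tilde\kappa$ sends $\Gamma_{U_1}$ to $\Gamma_{U_2}$. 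Conversely, given $\psi \in \Aut(\Sigma)$ with $\Gamma_{U_1}^\psi = \Gamma_{U_2}$, the image $\Lambda^\psi$ carries the projection of $\Sigma$ from $\Gamma_{U_2}$, and Proposition \ref{prop:transitivity} furnishes $\chi \in \Aut(\Sigma)$ moving $\Lambda^\psi$ back to $\Lambda$ while preserving the $\Aut(\Sigma)$-orbit of $\Gamma_{U_2}$; then $\chi \psi$ restricts on $\Lambda$ to a $\GaL(2, q^n)$-map sending $U_1$ to $U_2$.

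The principal obstacle is this reverse direction, since a generic $\psi \in \Aut(\Sigma)$ does not stabilise $\Lambda$ and one must invoke Proposition \ref{prop:transitivity} to restore $\Lambda$ while keeping control over the vertex. This is also where the distinction between the $\GaL$-class and the $\mathcal{Z}(\GaL)$-class becomes transparent: scalar dilations $U \mapsto \lambda U$ correspond precisely to elements of $\Aut(\Sigma)$ fixing $\Lambda$ pointwise, so coarsening $\GaL$-orbits to $\mathcal{Z}(\GaL)$-orbits parallels coarsening $\Aut(\Sigma)$-orbits of vertices to orbits under the subgroup of $\Aut(\Sigma)$ fixing $\Lambda$ pointwise.
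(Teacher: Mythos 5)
The paper does not actually prove Theorem \ref{geoclass}: it quotes it from \cite[Theorems 6 and 7]{Csajbok_Zanella} and \cite[Theorem 3.4]{Zanella-Zullo}, so your proposal has to be judged against that strategy, whose overall shape (a bijection between $\GaL(2,q^n)$-orbits of subspaces defining $L$ and $\Aut(\Sigma)$-orbits of admissible vertices) you do reproduce. The first genuine gap is in the definition of your forward map $\Phi$. Lunardon--Polverino gives some pair $(\Sigma_U,\Gamma_U)$ together with some centre, and for $\Gamma_U$ to lie in $\mathcal{V}$ you must normalise so that the subgeometry becomes the fixed $\Sigma$ \emph{while the centre remains the fixed line} $\Lambda$. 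Transitivity of $\mathrm{PGL}(n,q^n)$ on canonical subgeometries does not give this: the collineation you apply also moves the centre, and the stabiliser of $\Lambda$ is not transitive on the relevant subgeometries, since the relative position of a line with respect to a canonical subgeometry is an invariant (nor is ``disjoint from $\Lambda$'' the pertinent condition --- it is neither required nor automatic in the Lunardon--Polverino construction). This is precisely the point where the quoted sources, and this paper in Theorem \ref{Fdirect}/Proposition \ref{directline}, use an explicit vertex for the fixed $\Sigma$ attached to a normalised defining polynomial, followed by a re-projection argument onto the prescribed centre. As written, $\Phi(U)$ is not pinned down concretely enough for your later claim that ``by construction $\tilde\kappa$ sends $\Gamma_{U_1}$ to $\Gamma_{U_2}$'' to be checkable, nor for the mutual-inverse verifications.

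The second gap is the converse equivariance. Given $\psi\in\Aut(\Sigma)$ with $\Gamma_{U_1}^\psi=\Gamma_{U_2}$, you invoke Proposition \ref{prop:transitivity} to ``move $\Lambda^\psi$ back to $\Lambda$'': that proposition concerns transitivity on tuples of independent points whose conjugate spans fill $\Sigma^*$ and says nothing about transporting a prescribed line; moreover, even if such a $\chi$ existed, nothing forces it to fix $\Gamma_{U_2}$, so the composite need not carry $\Gamma_{U_1}$ to $\Gamma_{U_2}$, and no argument is given that its restriction to $\Lambda$ carries $U_1$ onto $U_2$. The standard argument avoids restoring $\Lambda$ altogether: writing $\Sigma=\PG(W)$, $\Lambda=\PG(R)$, $\Gamma_{U_i}=\PG(K_i)$, one has $U_i=(W+K_i)\cap R$; normalising the semilinear map $\hat\psi$ inducing $\psi$ so that $W^{\hat\psi}=W$, the composition of $\hat\psi|_R$ with the projection of the ambient space onto $R$ along $K_2$ is an invertible semilinear map of $R$ taking $U_1$ to $U_2$. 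This is the content of \cite[Theorem 2]{Csajbok_Zanella} and \cite[Theorem 3]{LavrauwVandeVoorde}, which the paper itself uses in Theorem \ref{thm:move-gamma}. Symmetrically, in your other direction the assertion that the diagonal extension of $\kappa$ stabilises $\Lambda$ setwise and induces $\kappa$ there is false in general (and unnecessary: the extension of $\pi_2^{-1}\kappa\,\pi_1$ from $W$ fixes $\Sigma$ and maps $K_1$ to $K_2$, which is all that is needed), and the closing identification of scalar dilations with elements of $\Aut(\Sigma)$ fixing $\Lambda$ pointwise is unsubstantiated. So the skeleton is the right one, but the two pivotal steps --- realising every $U$ as a vertex for the fixed pair $(\Sigma,\Lambda)$, and the equivariance in both directions --- are argued with tools that do not apply.
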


\medskip

Let $L_U$ be a linear set of $\PG(1, q^n)$ and let $\tau$ be a  polarity of the projective line. Then, it is
always possible to construct another linear set, which is called the dual linear set of $L_U$ with
respect to the polarity $\tau$, see \cite{Polverino}. In particular, if $\beta : \F_{q^n}^2\times \F_{q^n}^2 \longrightarrow  \F_{q^n}$ is a non-degenerate reflexive sesquilinear
form on $\F_{q^n}^2$ determining a polarity $\tau$, then the map $\beta'=\mathrm{Tr}_{q^n/q} \circ \beta$ is a non-degenerate reflexive sesquilinear form on $\F_{q^n}^2$, when this is regarded
as a $2n$-dimensional vector space over $\F_q$. 

Let $\perp_\beta$ and $\perp_{\beta'}$ be the orthogonal complement maps defined by $\beta$ and $\beta'$ on the lattices of the $\F_{q^n}$-subspaces and $\F_q$-subspaces of $\F_{q^n}^2$, respectively. The \textit{dual linear set $L_U^\tau$ of $L_U$ with respect to the polarity $\tau$} is the $\F_q$-linear set of rank $n$ of $\PG(1, q^n )$ defined by the orthogonal complement $U^{\perp_{\beta'}}$ and, up to
projective equivalence, such a linear set does not depend on $\tau$, see \cite[Proposition 2.5]{Polverino}.\\

As a direct consequence of \cite[Theorem 1]{Pepe}, we may state the following result:

\begin{theorem}\label{thm:Gamma_L_class}
Let $L_f$ be a maximum scattered linear set of $\PG(1,q^n)$. If $W$ is an $n$-dimensional $\F_q$-subspace of $\F_{q^n}^2$ such that $L_{W}=L_f$ with $f(x)=\sum_{i=1}^{n-1}a_ix^{q^{i}}$, then one of the following possibilities may occur:
\begin{enumerate}
 \item $L_f$ is equivalent to a linear set of \textit{pseudoregulus type} (see \cite{Blokhuis-Lavrauw}), i.e., 
\begin{equation}\label{blokhuis_lavrauw1}
 L^{1,n}_{s}=\{\langle (x, x^{q^s}) \rangle_{\F_{q^n}} \, \colon \, x \in \F^*_{q^n} \};
 \end{equation}
\item $W= \lambda U_f$ for some $\lambda \in \F^*_{q^n}$;
\item $W= \lambda U_{\hat{f}}$ for some $\lambda \in \F_{q^n}^*$, where \begin{equation*}
    \hat{f}(x)=\sum_{i=1}^{n-1}a_{n-i}^{q^{i}}x^{q^i}.
\end{equation*}
\end{enumerate}  
In particular, if $L_f$ is of pseudoregulus type $c_{\GaL}(L_f)= \phi(n)/2$, where $\phi$ is the totient function, otherwise $c_{\GaL}(L_f) \leq 2$.
\end{theorem}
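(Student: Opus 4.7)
The plan is to obtain the three listed possibilities as a direct translation of \cite[Theorem~1]{Pepe}, which classifies the $n$-dimensional $\F_q$-subspaces $W \subset \F_{q^n}^2$ satisfying $L_W = L_f$ when $L_f$ is a maximum scattered linear set. Pepe's result singles out the pseudoregulus case as exceptional and asserts that in every other situation the only such $W$ are the $\F_{q^n}^*$-scalar multiples of the canonical subspace $U_f$ and of the subspace $U_{\hat f}$ attached to the adjoint linearised polynomial $\hat f(x) = \sum_{i=1}^{n-1} a_{n-i}^{q^i} x^{q^i}$; these yield items $(2)$ and $(3)$.

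The bound $c_{\GaL}(L_f) \leq 2$ in the non-pseudoregulus case then follows immediately: the scaling $(x,y) \mapsto \lambda(x,y)$ lies in $\GL(2,q^n) \subseteq \GaL(2,q^n)$, so $\lambda U_f$ is $\GaL$-equivalent to $U_f$ and $\lambda U_{\hat f}$ is $\GaL$-equivalent to $U_{\hat f}$. Combined with the trichotomy, this yields at most two $\GaL$-orbits of subspaces defining $L_f$.

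For the pseudoregulus equality $c_{\GaL}(L^{1,n}_s) = \phi(n)/2$, I would enumerate directly. Every $\F_q$-subspace giving the same linear set as a given pseudoregulus representative is, up to $\F_{q^n}^*$-scaling, of the form $U_{s'} = \{(x,x^{q^{s'}}) : x \in \F_{q^n}\}$ with $\gcd(s',n)=1$, producing $\phi(n)$ candidate exponents. The coordinate swap $(x,y) \mapsto (y,x)$ composed with a suitable power of Frobenius realises a $\GaL$-equivalence $U_{s'} \sim U_{-s'}$, so the $\phi(n)$ exponents pair up into at most $\phi(n)/2$ orbits.

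The main obstacle is the final step: showing that no further $\GaL$-equivalences occur among the $U_{s'}$, i.e., that $U_{s_1}$ and $U_{s_2}$ lying in the same $\GaL$-orbit forces $s_2 \equiv \pm s_1 \pmod n$. I would settle this by inspecting the semilinear stabiliser of $L^{1,n}_s$ in $\PGaL(2,q^n)$, using that any element of $\GaL(2,q^n)$ mapping the graph of $x \mapsto x^{q^{s_1}}$ (up to scaling) to the graph of $x \mapsto x^{q^{s_2}}$ must, after clearing scalars, be of the form $(x,y) \mapsto (\alpha x^{q^t}, \beta y^{q^t})$ or $(x,y) \mapsto (\alpha y^{q^t}, \beta x^{q^t})$; matching exponents then pins down $s_2$ as $s_1$ or $-s_1$ modulo $n$ and closes the count.
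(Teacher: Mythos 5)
Your proposal matches the paper's route: the paper states this theorem as a direct consequence of \cite[Theorem 1]{Pepe}, which supplies exactly the trichotomy you invoke, and the class counts follow as you describe (scalars lie in $\GL(2,q^n)$, giving $c_{\GaL}(L_f)\leq 2$ outside the pseudoregulus case). Your enumeration for the pseudoregulus case, including the fact that every defining subspace is up to $\F_{q^n}^*$-scaling of the form $\{(x,x^{q^{s'}})\}$ with $\gcd(s',n)=1$ and that $U_{s_1}\sim U_{s_2}$ under $\GaL(2,q^n)$ forces $s_2\equiv\pm s_1\pmod n$, is precisely the known argument of Csajbók--Zanella \cite{Csajbok_Zanella} that the paper implicitly relies on for the value $\phi(n)/2$, so the proposal is correct and essentially the same.
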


 \noindent In \cite{Lunardon_Polverino1}, Lunardon and Polverino presented a class of linear sets which was generalised later by Sheekey in \cite{sheekey}. They are known in the literature  as linear sets of \textit{LP}-\textit{type} and an element of this family is $\PGaL$-equivalent to 
\begin{equation}\label{Lunardon_Polverino1}
L^{2,n}_{s,\eta}= \{ \langle (x, \eta x^{q^{s}}+x^{q^{n-s}})\rangle_{\F_{q^n}} \, \colon \, x \in \F^*_{q^n}\},
\end{equation}  
where $\mathrm{N}_{{q^n}/q}(\eta) \notin \{ 0,1\}$, $\gcd(r,n)=1$ for any $n\geq 3$ and $q>2$.

In the following, we will state two theorems containing characterization results for the linear sets of pseudoregulus and LP-type by means of some geometric properties of their vertices.

\begin{theorem} \cite[Theorem 2.3]{Csajbok_Zanella1} \label{pseudchar}
Let $\Sigma$ be a canonical subgeometry of $\PG(n-1, q^n)$, $q > 2$, $n \geq 3$. Assume that $\Gamma$ and $\Lambda$ are $(n-3)$-subspace and a line of $\PG(n-1, q^n)$, respectively, such that $\Sigma\cap\Gamma=\Lambda\cap\Gamma= \emptyset$. Then the following assertions are equivalent:
\begin{enumerate} 
    \item  The set $\mathrm{p}_{\Gamma, \Lambda}(\Sigma)$ is a scattered $\mathbb{F}_q$-linear set of pseudoregulus type;
    \item  there exists a generator $\sigma$ of the subgroup of $\PGaL(n,q^n)$ fixing $\Sigma$ pointwise such that $\dim(\Gamma\cap\Gamma^{\sigma})=n-4$; furthermore $\Gamma$ is not contained in the span of any hyperplane of $\Sigma$;
     \item There exists a point $P$ and a generator $\sigma$ of the subgroup of $\PGaL(n, q^n)$ fixing $\Sigma$ pointwise, such that $\langle P , P^{\sigma}, \ldots, P^{\sigma^{n-1}}\rangle= \PG(n-1, q^n)$, and
$$\Gamma=\langle P ,P^{\sigma},\ldots, P^{\sigma^{n-3}}\rangle.$$
\end{enumerate}

\end{theorem}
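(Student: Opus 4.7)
The plan is to prove the equivalences via the cycle $(1) \Rightarrow (3) \Rightarrow (2) \Rightarrow (1)$, using Theorems \ref{Fdirect} and \ref{GPT} as the bridge between the vertex $\Gamma$ and the polynomial $F$ defining $L$. Since $r = 2$, the construction simplifies considerably: the family $(c)$ from Theorem \ref{Fdirect} is absent, and $(b)$ reduces to $|I_0| - 1$ auxiliary points on the lines $\langle P_0^{\sigma^j}, P_0^{\sigma^{m_0}}\rangle$ with $j \in I_0 \setminus \{m_0\}$.

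For $(1) \Rightarrow (3)$: write $L \cong L_F$ with the monomial $F(x) = x^{q^s}$, so $I_0 = \{1\}$ and $m_0 = 1$. Family $(b)$ is vacuous, so Theorem \ref{Fdirect} yields $\Gamma = \langle P_0^{\sigma^2}, P_0^{\sigma^3}, \ldots, P_0^{\sigma^{n-1}}\rangle$, and setting $P := P_0^{\sigma^2}$ satisfies $(3)$, with imaginariness inherited from $P_0$. For $(3) \Rightarrow (2)$: the $\F_{q^n}$-independence of $P, P^\sigma, \ldots, P^{\sigma^{n-1}}$ (which encodes that $P$ is imaginary) gives $\dim(\Gamma + \Gamma^\sigma) = n - 2$, hence $\dim(\Gamma \cap \Gamma^\sigma) = n - 4$ by Grassmann's formula. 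Any hyperplane $H$ of $\Sigma$ is $\sigma$-fixed, so an inclusion $\Gamma \subseteq H$ would propagate to $\Gamma^{\sigma^i} \subseteq H$ for every $i$, pushing $\Sigma^* = \langle P, P^\sigma, \ldots, P^{\sigma^{n-1}}\rangle$ into the $(n-2)$-dimensional $H$, an absurdity.

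The main obstacle is $(2) \Rightarrow (1)$. Given the generator $\sigma$ witnessing $(2)$, Proposition \ref{prop:transitivity} allows normalization to $\Sigma = \boldsymbol{\Sigma}$ and $\sigma = \boldsymbol{\sigma}^s$ for some $s$ coprime to $n$. By \eqref{linearset}, $L \cong L_F$ for some $F$ as in \eqref{multivariatepolynomial} with $q^s$-support $I_0 \subseteq \{1, \ldots, n-1\}$, and Theorem \ref{Fdirect} places $\Gamma$ in the $\Aut(\Sigma)$-orbit of the subspace defined by equations \eqref{Gamma1}. A direct intersection computation then shows that $\dim(\Gamma \cap \Gamma^\sigma) = n - 4$ holds precisely when $|I_0| = 1$, while the Grassmann lower bound $n - 5$ is attained as soon as $|I_0| \geq 2$. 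The hypothesis in $(2)$ therefore forces $|I_0| = 1$, $F$ is a monomial, and $L$ is of pseudoregulus type.

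The technical heart is the dimension count: one tracks how the two equations defining $\Gamma$ transform under $\sigma$ — the first, $x_{0,0} = 0$, shifts to $x_{0,s} = 0$, while the second picks up Frobenius twists of the coefficients $a_{0i}$ — and solves the resulting linear system. The rigidity of the pseudoregulus case is encoded in the fact that a single nonzero term in $F$ lets the intersection reach the maximum dimension $n - 4$ allowed by Grassmann, whereas additional terms contribute extra constraints coupling $c_{m_i}$ with $b_{m_i + 1}$ and collapse the intersection to the minimum $n - 5$. The non-containment hypothesis enters implicitly to ensure that Theorems \ref{Fdirect} and \ref{GPT} apply non-degenerately, i.e., that $\Gamma$ is not confined to a $\sigma$-invariant hyperplane of $\Sigma^*$.
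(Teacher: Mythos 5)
First, note that the paper itself offers no proof of Theorem \ref{pseudchar}: it is quoted verbatim from Csajb\'ok--Zanella \cite[Theorem 2.3]{Csajbok_Zanella1}. So your attempt can only be judged on its own merits, and it has two genuine gaps. Your $(3)\Rightarrow(2)$ is fine, but both $(1)\Rightarrow(3)$ and $(2)\Rightarrow(1)$ rest on an unjustified transfer step. Theorem \ref{Fdirect} only produces \emph{one} configuration $(\Sigma,\Gamma_F,\Lambda_F)$ whose projection is equivalent to $L_F$; it says nothing about the particular subspace $\Gamma$ you are given in the hypothesis. Asserting in $(2)\Rightarrow(1)$ that ``Theorem \ref{Fdirect} places $\Gamma$ in the $\Aut(\Sigma)$-orbit of the subspace \eqref{Gamma1}'' (and, in $(1)\Rightarrow(3)$, that the given $\Gamma$ equals $\langle P_0^{\sigma^2},\ldots,P_0^{\sigma^{n-1}}\rangle$ after setting $P:=P_0^{\sigma^2}$) is precisely the vertex-transfer statement that needs proof. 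By Theorem \ref{geoclass} the admissible vertices of $L$ split into $c_{\GaL}(L)$ distinct $\Aut(\Sigma)$-orbits, and for pseudoregulus sets $c_{\GaL}(L)=\phi(n)/2$, which exceeds $1$ in general (Theorem \ref{thm:Gamma_L_class}); moreover the paper's own transfer result, Theorem \ref{thm:move-gamma}, explicitly \emph{excludes} the pseudoregulus case. So neither direction can simply identify the given $\Gamma$ with the standard vertex of Theorem \ref{Fdirect} without an additional argument (e.g.\ exhibiting $\phi(n)/2$ pairwise inequivalent standard vertices and showing they exhaust all orbits).

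Second, the computational core of your $(2)\Rightarrow(1)$ is false as stated. With $r=2$, $\Sigma=\boldsymbol{\Sigma}$, $\sigma=\boldsymbol{\sigma}^s$ and $\Gamma$ as in \eqref{Gamma1}, the intersection $\Gamma\cap\Gamma^{\sigma}$ is cut out by the four functionals $x_{0}$, $x_{s}$, $\sum_{i\in I}a_i x_{si}$, $\sum_{i\in I}a_i^{q^s}x_{s(i+1)}$ (indices mod $n$), and $\dim(\Gamma\cap\Gamma^{\sigma})=n-4$ means these have rank $3$. This does \emph{not} force $|I|=1$: for $n=3$ and $I=\{1,2\}$ the rank is $3$ for every choice of nonzero $a_1,a_2$, and more generally any support $I$ containing $1$ and $n-1$ with $I\setminus\{1\}=(I\setminus\{n-1\})+1$ admits coefficients $a_i=\lambda a_{i-1}^{q^s}$ producing rank $3$ with $|I|\geq 2$. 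Hence the inference ``$\dim(\Gamma\cap\Gamma^\sigma)=n-4$ forces $F$ monomial'' breaks down, and the second half of hypothesis $(2)$ --- that $\Gamma$ lies in the span of no hyperplane of $\Sigma$ --- must enter the computation concretely to eliminate (or reinterpret) these degenerate supports; relegating it to ``applies implicitly to ensure non-degeneracy'' leaves $(2)\Rightarrow(1)$ unproven.
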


\noindent In the same spirit, later in \cite{Zanella-Zullo}, the authors characterized linear sets of LP-type. In order to do this, they introduced the notion of intersection number of a subspace with respect to a collineation fixing pointwise a canonical subgeometry. More precisely, let $\Gamma$ be a non-empty $k$-dimensional subspace of $\PG(n-1,q^n)$, the \textit{intersection number of $\Gamma$ with respect to $\sigma$}, denoted by $\ints$, is defined as the least positive  integer $\gamma$ satisfying 
\begin{equation*}
    \dim(\Gamma \cap \Gamma^\sigma \cap \ldots \cap \Gamma^{\sigma^\gamma}) > k -2 \gamma.
\end{equation*}

It is easy to see that $\ints$ is invariant under the action of $\Aut(\Sigma)$. By Theorem \ref{pseudchar}, a maximum scattered linear set is of pseudoregulus type if and only if $\ints=1$ for some $\sigma$ fixing $\Sigma$ pointwise. Also in \cite{Zanella-Zullo} the following result was proven.

\begin{theorem}\cite[Theorem 3.2]{Zanella-Zullo}\label{LPcharct}
    Let $\Sigma$ be a canonical subgeometry of $\PG(n-1,q^n)$, $q>2$ and $n\geq4$. Let $L$ be a scattered linear set in  a line $\Lambda$ of $\PG(n-1,q^n)$. Then $L$ is a linear set of LP-type  if and only if 
    \begin{enumerate}
        \item [\textit{1.}] there exists an $(n-3)$-subspace $\Gamma$ of $\PG(n-1,q^n)$ such that  $\Gamma\cap\Sigma=\Gamma\cap\Lambda=\emptyset$ and $L=\mathrm{p}_{\Gamma,\Lambda}(\Sigma)$;
        \item [\textit{2.}] there exists a generator $\sigma$ of the subgroup of $\PGaL(n,q^n)$ fixing $\Sigma$ pointwise such that $\ints=2$;
        \item [\textit{3.}] there exists a unique point $P\in \PG(n-1,q^n)$ and some point $Q$ such that $$\Gamma=\langle P, P^{\sigma},\dots, P^{\sigma^{n-4}},Q\rangle ;$$
        \item [\textit{4.}] the line $\langle P^{\sigma^{n-1}}, P^{\sigma^{n-3}}\rangle$ meets $\Gamma$.
    \end{enumerate}
\end{theorem}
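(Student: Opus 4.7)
The approach is to use Theorems \ref{Fdirect} and \ref{GPT} (the $r=2$ case) as translation tools between the algebraic polynomial description of $L$ and the geometric configuration of its projection vertex $\Gamma$. The two implications of the equivalence will be handled separately, and the intersection number $\ints = 2$ will be verified by a direct computation in the forward direction and used to exclude the pseudoregulus case in the backward one.

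\medskip

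\noindent\textbf{Forward direction ($L$ of LP-type $\Rightarrow$ conditions 1--4).} By definition $L \cong L_f$ with $f(x) = \eta x^{q^s} + x^{q^{n-s}}$, $\gcd(s,n) = 1$. Since $q^{n-s} = q^{s(n-1)}$, this polynomial fits the framework of \eqref{multivariatepolynomial} with $r = 2$, support $I_0 = \{1, n-1\}$ and $q^s$-degree $m_0 = n-1$. Applying Theorem \ref{Fdirect} produces a subgeometry $\Sigma$, a generator $\sigma = \boldsymbol{\sigma}^s$, an imaginary point $P_0$ and a vertex
\[
\Gamma = \langle P_0^{\sigma^2}, P_0^{\sigma^3}, \ldots, P_0^{\sigma^{n-2}}, Q_{1,n-1}\rangle,
\]
where $Q_{1,n-1}$ lies on $\langle P_0^{\sigma}, P_0^{\sigma^{n-1}}\rangle$, together with a line $\Lambda$ such that $L \cong \mathrm{p}_{\Gamma,\Lambda}(\Sigma)$. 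Setting $P := P_0^{\sigma^2}$, the shift of indices turns this into $\Gamma = \langle P, P^\sigma,\ldots, P^{\sigma^{n-4}}, Q\rangle$ with $Q \in \langle P^{\sigma^{n-1}}, P^{\sigma^{n-3}}\rangle$, delivering conditions 1, 3 and 4 at once. Condition 2 will be verified by a direct computation: the intersection $\Gamma \cap \Gamma^{\sigma}$ contains the $(n-5)$-dimensional space $\langle P^\sigma,\ldots, P^{\sigma^{n-4}}\rangle$, and I plan to use the explicit coordinates of $Q$ from the proof of Theorem \ref{Fdirect} to argue that neither $Q$ nor any combination lies in $\Gamma^{\sigma}$ outside of this flag, forcing $\dim(\Gamma \cap \Gamma^{\sigma}) = n-5$; an analogous check gives $\dim(\Gamma \cap \Gamma^{\sigma} \cap \Gamma^{\sigma^{2}}) = n-6 > n-7$, so $\ints = 2$.

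\medskip

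\noindent\textbf{Backward direction (conditions 1--4 $\Rightarrow$ $L$ of LP-type).} Since $L$ is scattered, it is $(1,n-2)_q$-evasive, which is precisely the $r=2$ instance of the evasive hypothesis in Theorem \ref{GPT}. Using conditions 3 and 4, I would reverse the relabelling above: set $P_0 := P^{\sigma^{n-2}}$, so that
\[
\Gamma = \langle P_0^{\sigma^{2}}, P_0^{\sigma^{3}},\ldots, P_0^{\sigma^{n-2}}, Q\rangle, \qquad Q \in \langle P_0^{\sigma}, P_0^{\sigma^{n-1}}\rangle.
\]
This is exactly the spanning data $(a)$--$(b)$ produced by Theorem \ref{GPT} with $r = 2$ and $I_0 = \{1, n-1\}$. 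Invoking Theorem \ref{GPT} yields $L \cong L_F$ with $F(x) = a_1 x^{q^s} + a_{n-1} x^{q^{s(n-1)}}$ for some $a_1, a_{n-1} \in \F_{q^n}^*$ and $\gcd(s,n) = 1$. Rescaling the second coordinate of $\Lambda$ by $a_{n-1}^{-1}$ and writing $\eta = a_1/a_{n-1}$ presents $L$ in the LP-form \eqref{Lunardon_Polverino1}. Here condition 2 does real work: if we had $a_1 = 0$ or if $Q$ collapsed onto one of the endpoints $P^{\sigma^{n-1}}, P^{\sigma^{n-3}}$, then $\Gamma$ would be spanned by $n-3$ consecutive $\sigma$-conjugates of a single point, which by Theorem \ref{pseudchar} gives $\ints = 1$ and a linear set of pseudoregulus type, contradicting condition 2. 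Scatteredness of $L$ rules out $\mathrm{N}_{q^n/q}(\eta) \in \{0,1\}$, placing us in the admissibility range of LP-type.

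\medskip

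\noindent\textbf{Main obstacles.} The delicate part is controlling the intersection number and proving uniqueness of $P$ in condition 3. Verifying $\ints = 2$ (rather than merely $\ints \geq 2$) needs one to rule out unexpected coincidences in $\Gamma \cap \Gamma^\sigma$ coming from the extra point $Q$ and its $\sigma$-conjugate, which will rely on the fact that the coordinates of $Q$ described in Theorem \ref{Fdirect} involve both $a_{1,0}$ and $a_{n-1,0}$ non-trivially (hence $\eta \neq 0$). For uniqueness of $P$ I would argue by contradiction: any alternative $P'$ giving the same $\Gamma$ with the structure of condition 3 would, after applying Theorem \ref{GPT} and Proposition \ref{prop:transitivity}, yield a second admissible polynomial whose support is forced to coincide with $\{1,n-1\}$; the only residual freedom is the $\mathcal{Z}(\GaL)$-orbit of $\eta$, and Theorem \ref{thm:Gamma_L_class} pins down that this orbit corresponds to a single geometric point, giving $P' = P$.
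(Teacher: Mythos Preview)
The paper does not give its own proof of this theorem: it is quoted verbatim from \cite[Theorem 3.2]{Zanella-Zullo} and no argument follows the statement. So there is nothing in the present paper to compare your attempt against; the original proof lives in the Zanella--Zullo article and uses their direct analysis of $\Gamma\cap\Gamma^\sigma$ rather than the machinery of Theorems \ref{Fdirect} and \ref{GPT} (which are introduced for the first time here).

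That said, your strategy has two soft spots worth flagging. First, in the forward direction Theorem \ref{Fdirect} manufactures its own subgeometry $\boldsymbol{\Sigma}$ and its own line, whereas the statement fixes $\Sigma$ and $\Lambda$ in advance and asks for $L=\mathrm{p}_{\Gamma,\Lambda}(\Sigma)$ (equality, not $\PGaL$-equivalence). You need an explicit transport step---a collineation taking $\boldsymbol{\Sigma}$ to $\Sigma$ and the constructed line to $\Lambda$---and must check that the spanning description of $\Gamma$ and the value of $\ints$ survive this move. This is routine but not free; in the paper the analogous transfer is the content of Theorem \ref{thm:move-gamma} and Proposition \ref{prop:pointgamma}, which require extra hypotheses. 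Second, your argument for the \emph{uniqueness} of $P$ in condition 3 is only a sketch: invoking Theorem \ref{thm:Gamma_L_class} constrains the $\GaL$-orbit of the underlying subspace, but it does not by itself pin down a single point $P$ inside a fixed $\Gamma$. In the Zanella--Zullo original this uniqueness is extracted from the condition $\ints=2$ together with a careful study of which conjugates $P^{\sigma^j}$ can lie in $\Gamma$; reproducing that via Theorem \ref{GPT} alone would need more than you have written.
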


Moreover, as a consequence of Theorem \ref{thm:Gamma_L_class} we get a slight strengthening of \cite[Theorem 3.5]{Zanella-Zullo}, indeed removing the assumptions $n \leq 6$ and $n=8$. Precisely, we may state the following result.

\begin{theorem} Let $L$ be a maximum scattered linear set in a line $\Lambda$ of $\PG(n-1,q^n)$ for any $n \geq 4$ and $q>2$. Then $L$ is a linear set of LP-type if and only if for each $(n - 3)$-subspace $\Gamma$ of $\PG(n - 1, q^n)$ such that $L = \mathrm{p}_{\Gamma ,\Lambda}(\Sigma)$, the following holds:
\begin{itemize}
\item [$i)$] there exists a generator $\sigma$ of the subgroup of $\PGaL(n, q^n)$ fixing $\Sigma$ pointwise such that $\ints= 2$;
\item [$(ii)$] if $P$ is the unique point of $\PG(n -1, q^n)$ such that
\begin{equation*}
    \Gamma = \langle P, P^\sigma,\ldots, P^{\sigma^{n-4}},Q \rangle
\end{equation*}
then the line $\langle P^{\sigma^{n-1}}, P^{\sigma^{n-3}}\rangle$ meets $\Gamma$.
\end{itemize}
\end{theorem}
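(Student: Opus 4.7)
The sufficiency direction is essentially a restatement: if some vertex $\Gamma$ with $L=\mathrm{p}_{\Gamma,\Lambda}(\Sigma)$ satisfies (i) and (ii), then Theorem \ref{LPcharct} directly yields that $L$ is of LP-type. The real content is the necessity direction, which strengthens Theorem \ref{LPcharct} from ``there exists a vertex'' to ``every vertex''. My plan is therefore to combine Theorem \ref{geoclass}, Theorem \ref{thm:Gamma_L_class} (which is exactly where the restriction $n\leq 6$ or $n=8$ in the original Zanella--Zullo result is avoided), and Theorem \ref{LPcharct}.

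Assume $L$ is of LP-type. By Theorem \ref{geoclass}, the $(n-3)$-subspaces $\Gamma$ of $\Sigma^*$ with $\Sigma\cap\Gamma=\Lambda\cap\Gamma=\emptyset$ and $\mathrm{p}_{\Gamma,\Lambda}(\Sigma)\cong L$ fall into exactly $c_{\GaL}(L)$ orbits under $\Aut(\Sigma)$. Since $n\geq 4$, the LP-type linear set $L$ is not of pseudoregulus type, hence Theorem \ref{thm:Gamma_L_class} forces $c_{\GaL}(L)\leq 2$, with the two possible presentations of $L$ being $L_f$ and $L_{\hat f}$, where $f(x)=\eta x^{q^s}+x^{q^{n-s}}$ is an LP-polynomial and $\hat f(x)=\sum_{i=1}^{n-1}a_{n-i}^{q^i}x^{q^i}$. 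Because conditions (i) and (ii) are $\Aut(\Sigma)$-invariant (the quantity $\ints$ is invariant, and the existence/uniqueness statement about $P$ and the line through $P^{\sigma^{n-1}}, P^{\sigma^{n-3}}$ is transported by collineations), it suffices to verify (i) and (ii) on one representative of each orbit. For the orbit containing the vertex associated with $U_f$, Theorem \ref{LPcharct} applies directly.

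For the other orbit I would apply Theorem \ref{LPcharct} to the presentation $U_{\hat f}$, which requires checking that $\hat f$ again defines an LP-type linear set. The short computation is that $\hat f(x)=x^{q^s}+\eta^{q^{n-s}}x^{q^{n-s}}$, so after setting $s':=n-s$ and $\eta':=\eta^{q^{n-s}}$, one has $\gcd(s',n)=1$ and $\mathrm{N}_{q^n/q}(\eta')=\mathrm{N}_{q^n/q}(\eta)\notin\{0,1\}$ (the norm is invariant under the Frobenius on the argument, since raising each factor of the product $\eta \eta^q\cdots\eta^{q^{n-1}}$ to $q^{n-s}$ just cyclically permutes the factors). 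Hence $L_{\hat f}\cong L^{2,n}_{s',\eta'}$, and Theorem \ref{LPcharct} applied to this presentation produces (i) and (ii) for the second orbit.

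The step I expect to be the only genuine obstacle is the self-duality computation for $\hat f$: confirming that the adjoint of an LP-polynomial is again an LP-polynomial (as opposed to some other scattered trinomial), because this is what prevents the second $\Aut(\Sigma)$-orbit from being ``exotic'' and thereby allows the bound $c_{\GaL}(L)\leq 2$ coming from Theorem \ref{thm:Gamma_L_class} to be used as a clean stopping condition. Once that is in place, the previous restrictions of $n\leq 6$ and $n=8$ become redundant, since they were only needed to control $c_{\GaL}$ by ad hoc arguments before the general statement of Theorem \ref{thm:Gamma_L_class} was available.
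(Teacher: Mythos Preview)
Your proposal is correct and matches the paper's intended approach: the paper gives no detailed proof but simply declares the theorem a consequence of Theorem~\ref{thm:Gamma_L_class}, and your argument---bounding $c_{\GaL}(L)\le 2$ via Theorem~\ref{thm:Gamma_L_class}, identifying the at most two $\Aut(\Sigma)$-orbits of vertices with $\Gamma_f$ and $\Gamma_{\hat f}$ via Theorems~\ref{geoclass} and~\ref{thm:embed}, verifying that $\hat f$ is again an LP-polynomial, and invoking the $\Aut(\Sigma)$-invariance of conditions (i)--(ii)---is exactly the natural way to unpack that claim. Your computation $\hat f(x)=x^{q^s}+\eta^{q^{n-s}}x^{q^{n-s}}$ and the norm check are correct, and you also implicitly have $\Lambda_f=\Lambda_{\hat f}=\langle P,P^{\sigma^{n-1}}\rangle$ (since $I=\hat I=\{1,n-1\}$), so the hypothesis of Theorem~\ref{thm:embed} needed to pin down the two orbit representatives is satisfied.
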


\medskip
\noindent In the last decade, many new constructions of maximum scattered linear sets of the projective line were exhibited. In the following, in the light of what was done for the examples belonging to classes \eqref{blokhuis_lavrauw1} and \eqref{Lunardon_Polverino1}, we will provide characterization results similar to Theorems \ref{pseudchar} and \ref{LPcharct},  for all the other constructions mentioned above. 

Toward this aim we first premise a complete list of examples that have appeared from 2018 onward.

 When $n\in \{6,8\}$,  in \cite{Csajbok-Marino-Polverino-Zanella}, the following family of $\F_q$-linear sets was presented 

\begin{equation}\label{ex:CMPZ}
 L^{3,n}_{\ell,\eta}= \{ \langle (x, \eta x^{q^{\ell}}+x^{q^{\ell
+n/2}})\rangle_{\F_{q^n}} \, \colon \, x \in \F^*_{q^n}\},   
\end{equation}
where $\gcd(\ell,n/2)=1$ and $\eta \in \F_{q^n}$ such that $\N_{q^n/q^{n/2}}(\eta) \not \in \{0,1\}$. Moreover in \cite{Csajbok-Marino-Polverino-Zanella}, it was shown that, for some choices of $q$ and  $\eta$, $L_{\ell,\eta}^{3,n}$ turns out to be scattered, see \cite[Theorem 7.1 and Theorem 7.2]{Csajbok-Marino-Polverino-Zanella}.
Later, necessary and sufficient conditions for these examples in order to be scattered, were also provided. Precisely,
\begin{itemize}
\item[-] if $n=6$ and $q>2$, $L_{2,\eta}^{3,n}$ is scattered if and only if the equation 
\begin{equation*}
Y^2-(\Tr_{q^3/q}(\gamma)-1)Y+\N_{q^3/q}(\gamma)=0, 
\end{equation*}
where $\gamma = -\frac{ \eta^{q^3+1}}{1-\eta^{q^3+1}}$,
admits two solutions in $\F_q,$ see \cite{BCM} and \cite[Theorem 7.3]{polzullo}, 
\item[-] if $n=8$ and $q \leq 11$ or $q \geq 1039891$ odd, $L_{\ell,\eta}^{3,n}$ is scattered if and only $\mathrm{N}_{q^8 / q^4}(\eta) = -1$, see \cite{Timpanella_Zini}. 
\end{itemize}

In 2018, another example was found in the case when $n=6$ and $q$ is odd, consisting of linear sets with the following shape:

\begin{equation}{\label{tri11}}
   L^{4,6}_{\delta} = \{ \langle (x, x^q + x^{q^3}+ \delta x^{q^5})\rangle_{\F_{q^6}}\, \colon \, x \in \F^*_{q^6}\} , 
\end{equation} where $\delta \in \F_{q^6}^*$ such that $\delta^2+\delta= 1;$ see \cite{Csajbok_Marino_Zullo} and \cite{Marino_Montanucci_Zullo}. Very recently, it has been proven that for a different set of conditions on $\delta$, $L_{\delta}^{4,6}$ is
scattered also for $q$ even and large enough, see \cite{BartLongMarTimp}.\\ 
In the series of  articles \cite{Bartoli_Zanella_Zullo, gupta_longobardi_trombetti,longobardi_marino_trombetti_zhou,longobardi_zanella,neri_santonastaso_zullo,SmaZaZu,Zanella-Zullo}, various authors studied the class of linear sets defined as
\begin{equation}\label{quadrinomial}
L_{m,h,s}^{5,t}=\{ \langle(x, \psi_{t,m,h,s}(x))\rangle_{\mathbb{F}_{q^{2t}}} : x \in \mathbb{F}_{q^{2t}}^* \}
\end{equation}
where 
\begin{equation*}
\psi_{t,m,h,s}(x)=x^{q^{s(t-1)}}+h^{1-q^{s(2t-1)}}x^{q^{s(2t-1)}}+m \left (x^{q^s}-h^{1-q^{s(t+1)}}x^{q^{s(t+1)}} \right )
\end{equation*}
for $q$ odd, $t \geq 3$, $\gcd(s,2t)=1$ and $m,h \in \F_{q^{2t}}^*$ with $\mathrm{N}_{q^{2t}/q^t}(h)=-1$.

The linear set $L_{m,h,s}^{5,t}$ turns out to be scattered under one of the following hypotheses:
\begin{enumerate}
    \item [$(i)$] $m=1$ and $h \in \F_{q^t}^*$, see \cite{Bartoli_Zanella_Zullo, longobardi_zanella, neri_santonastaso_zullo,Zanella-Zullo}.
    \item [$(ii)$] $m=1$ and $h \in \F_{q^{2t}}\setminus \F_{q^t}$, see \cite{Bartoli_Zanella_Zullo,gupta_longobardi_trombetti,longobardi_marino_trombetti_zhou,neri_santonastaso_zullo}.
    \item [$(iii)$] $m$ belongs to $\F_{q^t}$ and it is neither a $(q-1)$-th power nor a $(q+1)$-th power of an element belonging to $\ker \mathrm{Tr}_{q^{2t}/q^t}$ and $h=1$, see \cite{SmaZaZu}.
    \end{enumerate} 
Note that if $h \in \F_{q^t}^*$, $t \geq 3$ odd and $\mathrm{N}_{q^{2t}/q^t}(h)=-1$ implies that $q \equiv 1 \pmod 4$.

\noindent Moreover, in \cite{Bartoli_Zanella_Zullo}, the authors showed that $L^{5,3}_{1,h,1}$ does not lie in Classes \eqref{blokhuis_lavrauw1},\eqref{Lunardon_Polverino1},\eqref{ex:CMPZ} and  \eqref{tri11}, except when $q$ is a power of $5$, and $h \in \F_q^*$. In this case, the linear set is equivalent to $L^{4,6}_{\delta}$ for some $\delta \in \F_{q^6}^*$, see \cite[Corollary 3.11]{Bartoli_Zanella_Zullo}. In  \cite{gupta_longobardi_trombetti}, it was proven that $L^{5,4}_{1,h,s}$ is not $\PGaL$-equivalent to any linear sets of $\PG(1,q^8)$ belonging to classes in \eqref{blokhuis_lavrauw1},\eqref{Lunardon_Polverino1},\eqref{ex:CMPZ}, \eqref{tri11}, see \cite[Theorem 4.4]{gupta_longobardi_trombetti}.

\medskip

\noindent 
As we have seen before, in \cite{Csajbok_Zanella1,Zanella-Zullo}, linear sets of pseudoregulus and LP-type were characterized exploiting the intersection number of their vertex of projection.  This approach does not work for scattered linear sets belonging to the other classes described above. Indeed in the following we show that  there exist projection vertices of inequivalent linear sets having the same intersection number.
To this aim, let us consider
\begin{equation*}
    \Sigma=\{\langle(x,x^q, \dots,x^{q^5})\rangle_{\mathbb{F}_{q^6}}:x\in \mathbb{F}_{q^6}^*\},
\end{equation*}
the canonical subgeometry of $\mathrm{PG}(5,q^6)$ fixed by the collineation 
\begin{equation*}
\sigma: \langle( x_0, x_1, \dots, x_5)\rangle_{\mathbb{F}_{q^6}}\in \mathrm{PG}(5,q^6)\longrightarrow\langle(x^q_5, x^q_0, x^q_1, \dots, x^q_4)\rangle_{\mathbb{F}_{q^6}}\in \mathrm{PG}(5,q^6).
\end{equation*}
Let $L^{3,6}_{1,\eta}$ be defined as in \eqref{ex:CMPZ}. It is easy to see that this example is equivalent to a linear set obtained as the projection of $\Sigma$ from the solid $\Gamma_\eta$ to the line $\Lambda_\eta$ having the following equations:  
\begin{align}
\Gamma_\eta:\begin{cases}
    x_0=0\\
    x_1+\eta x_4=0
    \end{cases}   &     &\Lambda_\eta: 
        x_1=x_2=x_3=x_5=0 .
\end{align}
We have $\Gamma_\eta\cap\Sigma=\emptyset=\Gamma_\eta\cap\Lambda_\eta$. Moreover, $\dim(\Gamma_\eta\cap\Gamma_\eta^\sigma)=1$ and $\dim(\Gamma_\eta\cap\Gamma_\eta^\sigma\cap\Gamma_\eta^{\sigma^2})=-1$, hence $\mathrm{intn}_{\sigma}(\Gamma_\eta)=3$.
Similarly, the linear set $L^{5,3}_{1,h,1}$ is equivalent to the projection of $\Sigma$ from the vertex $\Gamma_h$ on the line $\Lambda_{h}$ with equations
\begin{align}
\Gamma_h:\begin{cases}
    x_0=0\\
    h^{q-1}x_1-h^{q^2-1} x_2+x_4+x_5=0
    \end{cases}    &     &\Lambda_h: 
        x_1=x_2=x_3=x_4=0
\end{align}
and as shown in \cite[Section 3.2]{Bartoli_Zanella_Zullo}, $\mathrm{intn}_{\sigma}(\Gamma_{h})=3$ as well.

\medskip
\noindent Since, Theorem \ref{Fdirect} will be crucial in the subsequent part, we will state it when $r=2$, i.e., for linear sets of the projective line $\PG(1,q^n)$.

\begin{proposition}\label{directline} Let $L_f$ be a linear set of rank $n$ of the projective line $\PG(1,q^n)$ as defined in (\ref{multivariatepolynomial}) with $f(x)=\sum_{i=1}^{n-1} a_i x^{q^{si}}$,  $m=\deg_{q^s}f(x)$ and $I=\mathrm{supp}_{q^s}f(x)$.
Then, there exist a canonical subgeometry $\Sigma$ of $\Sigma^*=\PG(n-1,q^n)$ and an imaginary point $P$ of $\Sigma^*$ (wrt $\Sigma$) such that $L_f$ is equivalent to $\mathrm{p}_{\Gamma,\Lambda}(\Sigma)$ with
\begin{equation}\label{gamma}
\Gamma_f= \langle P^{\sigma^j}, Q_i \colon j \not \in I , j \neq 0 \, \textnormal{and}\, i \in I\setminus\{m\}\rangle, 
\end{equation}
where $Q_i \in \langle  P^{\sigma^i}, P^{\sigma^m} \rangle $, $i \in  I \setminus \{m\}$
and $\Lambda= \langle P,P^{\sigma^m}  \rangle$.
\end{proposition}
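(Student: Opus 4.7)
The plan is to derive this proposition as a direct specialization of Theorem \ref{Fdirect} to the case $r=2$, combined with the dimensional observation recorded immediately after that theorem. When $r=2$, the multivariate polynomial $F(\mathbf{x})$ in \eqref{multivariatepolynomial} collapses to a single univariate $\F_{q^s}$-linearized polynomial $f(x)$, so there is a unique imaginary point $P_0 = P$, and $I_0 = I$, $m_0 = m$. Condition $(iii)$ of Theorem \ref{Fdirect} is vacuous, as it is stated only for $r > 2$, so only the conditions coming from $(i)$ and $(ii)$ remain.

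First I would invoke Theorem \ref{Fdirect} to produce a canonical subgeometry $\Sigma\subset \Sigma^*=\PG(n-1,q^n)$, a generator $\sigma$ of the pointwise stabilizer of $\Sigma$ in $\PGaL(n,q^n)$, the imaginary point $P$, the $(n-3)$-dimensional subspace $\Gamma$ and the line $\Lambda$, with $\Gamma\cap\Sigma=\emptyset=\Gamma\cap\Lambda$ and $\mathrm{p}_{\Gamma,\Lambda}(\Sigma)$ equivalent to $L_f$. Conditions $(i)$ and $(ii)$ of that theorem then guarantee that $\Gamma$ contains each point $P^{\sigma^j}$ with $j\notin I$, $j\neq 0$, and an intersection point $Q_i\in \langle P^{\sigma^i},P^{\sigma^m}\rangle$ for each $i\in I\setminus\{m\}$.

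Next I would count these points: there are $n-1-|I|$ points of the first kind and $|I|-1$ of the second kind, for a total of $n-2$. By the remark immediately following Theorem \ref{Fdirect}, these $n-2$ points are linearly independent. Since $\dim\Gamma=n-3$, this independent set is also a spanning set of $\Gamma$, and so $\Gamma$ coincides with $\Gamma_f$ as displayed in \eqref{gamma}. Finally, the $(r-1)$-dimensional subspace $\Lambda$ chosen in the proof of Theorem \ref{Fdirect} is $\langle P_0,\ldots,P_{r-2},P_{r-2}^{\sigma^{m_{r-2}}}\rangle$, which for $r=2$ reduces to $\langle P,P^{\sigma^m}\rangle$, matching the statement.

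No genuine obstacle arises; the only bookkeeping is the count $(n-1-|I|)+(|I|-1)=n-2$, which matches $\dim\Gamma+1$ and therefore converts the containment of these points in $\Gamma$ into the equality $\Gamma=\Gamma_f$, together with the observation that condition $(iii)$ of Theorem \ref{Fdirect} imposes no extra generators in the case $r=2$.
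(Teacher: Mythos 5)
Your proposal is correct and matches the paper's intent exactly: the paper gives no separate proof of Proposition \ref{directline}, presenting it precisely as the $r=2$ specialization of Theorem \ref{Fdirect}, with the spanning/independence of the $n-2$ points of types $(a)$ and $(b)$ already recorded in the remark following that theorem. Your explicit count $(n-1-|I|)+(|I|-1)=n-2=\dim\Gamma+1$ and the observation that condition $(iii)$ is vacuous for $r=2$ are exactly the bookkeeping the authors leave implicit.
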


We observe that in light of Theorems \ref{geoclass}, \ref{thm:Gamma_L_class} and Proposition  \ref{directline}, the result above can be further specialised for maximum scattered linear sets not of pseudoregulus type as follows.

\begin{theorem}\label{thm:embed}
    Let $L_f$ be a maximum scattered linear set of the projective line $\PG(1,q^n)$ not of pseudoregulus type with $f(x)=\sum_{i=1}^{n-1}a_ix^{q^{si}}$, $m=\deg_{q^s}f(x)$ and $I=\mathrm{supp}_{q^s}f(x)$. Then, there exist a canonical subgeometry $\Sigma$ of $\Sigma^*=\PG(n-1,q^n)$ and an imaginary point $P$ of $\Sigma^*$ (wrt $\Sigma)$ such that
    \begin{enumerate}
        \item $L_f$ is equivalent to $\mathrm{p}_{\Gamma_f,\Lambda_f}(\Sigma)$ with 
\begin{equation}\label{gammaf}
\Gamma_f= \langle P^{\sigma^j}, Q_i \colon j \not \in I , j \neq 0 \, \textnormal{and}\, i \in I\setminus\{m\}\rangle \,\, \textnormal{ and } \Lambda_f= \langle P,P^{\sigma^m}  \rangle
\end{equation}
where $Q_i \in \langle  P^{\sigma^i}, P^{\sigma^m} \rangle $ and $i \in  I \setminus \{m\}$,
\item $L_f$ is equivalent to $\mathrm{p}_{\Gamma_{\hat{f}},\Lambda_{\hat{f}}}(\Sigma)$ with
\begin{equation}\label{gammahatf}
\Gamma_{\hat{f}}=\langle P^{\sigma^j}, Q_i \colon j \not \in \hat{I} , j \neq 0 \, \textnormal{and}\, i \in \hat{I}\setminus\{\hat{m}\}\rangle \,\, \textnormal{ and } \Lambda_{\hat{f}}= \langle P,P^{\sigma^{\hat{m}}}  \rangle  \end{equation}
 where $Q_i \in \langle  P^{\sigma^i}, P^{\sigma^{\hat{m}}} \rangle $, $i \in  \hat{I} \setminus \{\hat{m}\}$,  $\hat{I}=\mathrm{supp}_{q^s}\hat{f}(x)$, and $\hat{m}=\deg_{q^s} \hat{f}(x)$.
\end{enumerate}
In particular, if $c_{\GaL}(L_f)=2$ and $\Lambda_f=\Lambda_{\hat{f}}$ then $ \Gamma_{f}$ and $\Gamma_{\hat{f}}$ do not belong to the same orbit under the action of $\Aut(\Sigma)$, otherwise they do. 
\end{theorem}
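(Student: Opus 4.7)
The plan is to deduce items (1) and (2) directly from Proposition \ref{directline} applied to $f$ and $\hat{f}$ respectively, and then obtain the orbit assertion by combining Theorem \ref{geoclass} with Theorem \ref{thm:Gamma_L_class}.

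First I would apply Proposition \ref{directline} to $f(x)$ to produce a canonical subgeometry $\Sigma$, an imaginary point $P$, a vertex $\Gamma_f$, and a center $\Lambda_f=\langle P,P^{\sigma^m}\rangle$ realising $L_f\cong\mathrm{p}_{\Gamma_f,\Lambda_f}(\Sigma)$; this is item (1). For item (2), I would use that $L_{\hat{f}}\cong L_f$ (a consequence of Pepe's adjoint result in \cite{Pepe} underlying Theorem \ref{thm:Gamma_L_class}), then apply Proposition \ref{directline} to $\hat{f}$ to obtain an analogous representation. Since any two canonical subgeometries of $\Sigma^*$ are $\PGaL$-equivalent and, by Proposition \ref{prop:transitivity}, the group $\mathrm{LAut}(\Sigma)$ acts transitively on imaginary points whose $\sigma$-orbit spans $\Sigma^*$, this second representation can be conjugated so as to share the same $\Sigma$ and $P$ as in (1), yielding the desired $\Gamma_{\hat{f}}$ and $\Lambda_{\hat{f}}$.

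For the orbit statement, assume $\Lambda_f=\Lambda_{\hat{f}}=\Lambda$. By Theorem \ref{geoclass}, $c_{\GaL}(L_f)$ equals the number of $\Aut(\Sigma)$-orbits of $(n-3)$-subspaces $\Gamma$ of $\Sigma^*$ disjoint from $\Sigma$ and $\Lambda$ with $\mathrm{p}_{\Gamma,\Lambda}(\Sigma)\cong L_f$. Since $L_f$ is not of pseudoregulus type, Theorem \ref{thm:Gamma_L_class} forces every such realising $\F_q$-subspace to be, up to an $\F_{q^n}^*$-scalar (which does not alter the associated projection vertex), either $U_f$ or $U_{\hat{f}}$. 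Consequently every admissible $\Gamma$ is $\Aut(\Sigma)$-equivalent to $\Gamma_f$ or to $\Gamma_{\hat{f}}$. Hence $c_{\GaL}(L_f)=2$ forces $\Gamma_f$ and $\Gamma_{\hat{f}}$ into distinct $\Aut(\Sigma)$-orbits, while $c_{\GaL}(L_f)=1$ forces them into a common orbit.

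The main delicate point is the dictionary between $\GaL(2,q^n)$-equivalence classes of realising subspaces and $\Aut(\Sigma)$-orbits of projection vertices: in particular, checking that scaling $U_f$ by $\lambda\in\F_{q^n}^*$ does not affect the $\Aut(\Sigma)$-orbit of its associated vertex, and that the centers $\Lambda_f,\Lambda_{\hat{f}}$ can be taken equal under the hypothesis so that Theorem \ref{geoclass} applies and produces the claimed correspondence between $c_{\GaL}(L_f)$ and the number of orbits of $\{\Gamma_f,\Gamma_{\hat{f}}\}$.
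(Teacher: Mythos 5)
Your route is the same one the paper takes: Theorem \ref{thm:embed} is presented there with no separate proof, only as an observation made ``in light of'' Theorem \ref{geoclass}, Theorem \ref{thm:Gamma_L_class} and Proposition \ref{directline}, and your proposal spells out exactly that combination. Items (1)--(2) and the orbit dichotomy in the case $\Lambda_f=\Lambda_{\hat f}$ are argued along the intended lines.

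Two caveats. First, in item (2) it is cleaner, and safer, to avoid the conjugation step: since $L_{\hat f}=L_f$ as point sets and $\hat f$ is again a $q^s$-linearized polynomial, the explicit construction of Theorem \ref{Fdirect}/Proposition \ref{directline} applied to $\hat f$ already yields the \emph{same} $\boldsymbol{\Sigma}$, the same coordinate point $P$ and the same generator $\sigma=\boldsymbol{\sigma}^s$ as for $f$. Your detour through $\PGaL$-transitivity on subgeometries and Proposition \ref{prop:transitivity} only carries the second datum to \emph{some} generator of the pointwise stabiliser of $\Sigma$, i.e.\ to $\sigma^k$ with $\gcd(k,n)=1$; if $k\neq 1$ the exponents in \eqref{gammahatf} are multiplied by $k$ and the displayed support $\hat I$ is no longer correct, so you would still have to justify that the generator can be taken equal to the $\sigma$ of item (1). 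Second, you prove the final claim only after assuming $\Lambda_f=\Lambda_{\hat f}$, whereas the ``otherwise'' clause of the statement also covers $\Lambda_f\neq\Lambda_{\hat f}$, which your argument leaves untouched. For $c_{\GaL}(L_f)=1$ one can still conclude, e.g.\ by choosing a line disjoint from both $\Gamma_f$ and $\Gamma_{\hat f}$ and applying Theorem \ref{geoclass} with that line as centre, using that the $\GaL$-class of the subspace cut out on a centre does not depend on the choice of the centre disjoint from the vertex; note, however, that this very centre-independence makes the remaining subcase ($c_{\GaL}(L_f)=2$ with $\Lambda_f\neq\Lambda_{\hat f}$, where the statement asserts a common orbit) delicate, and the paper never uses it, invoking the theorem only when $L_f$ is simple or $\Lambda_f=\Lambda_{\hat f}$. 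So your proof covers the parts of the statement that are actually needed, but it does not prove the ``otherwise'' branch in full generality and should say so explicitly.
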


\begin{remark}\label{aggiunto}
We stress here the fact that in previous statment, the vertex $\Gamma_{\hat f}$ can be spanned by the same $P^{\sigma^j}$'s and the points $T_i \in \Gamma_{\hat f} \cap \langle P^{\sigma^i},P^{\sigma^{\tilde{m}}}\rangle,$ where $i \in \hat I \setminus \{\tilde{m}\}$ and $\tilde{m} = \min\hat{I}$, as well.  
\end{remark}
Putting together Theorem \ref{thm:embed} above with \cite[Theorem 2]{Csajbok_Zanella} and \cite[Theorem 3]{LavrauwVandeVoorde}, we derive the following result.

\begin{theorem}\label{thm:move-gamma}
Let  $\Sigma \cong \PG(n - 1, q)$ be a canonical subgeometry of $\Sigma^*= \PG(n - 1, q^n)$. Let $L$  be a linear set contained in a line  $\Lambda$ of $\Sigma^*$ such that $L=\mathrm{p}_{\Gamma,\Lambda}(\Sigma)$ where $\Gamma$ is an $(n - 3)$-subspace of $\Sigma^*$ such that
$\Gamma \cap \Sigma = \emptyset= \Gamma \cap \Lambda$. Let us consider $L_f$ be a maximum scattered linear set of $\PG(1,q^n)$ not of pseudoregulus type and $\Gamma_f$, $\Gamma_{\hat{f}}$, $\Lambda_{f}$ and $\Lambda_{\hat{f}}$ as defined in \eqref{gammaf} and $\eqref{gammahatf}$. Then, if $L$ is simple  or $\Lambda_f=\Lambda_{\hat{f}}$, 
the following statements are equivalent
\begin{itemize}
\item [$(i)$] $L_f \cong L$;
\item [$(ii)$] there exists a collineation $\beta$ of $\Sigma^*$ such that $\mathbf{\Sigma}^{\beta}=\Sigma$ (cf. \eqref{Sigma}) and either $\Gamma = \Gamma^\beta_f$ or  $\Gamma = \Gamma^\beta_{\hat{f}}$.
\end{itemize}
\end{theorem}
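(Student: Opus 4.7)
The plan is to handle the two implications separately, leveraging Theorem \ref{thm:embed} to bring $L_f$ into a projective form compatible with $L$, and Theorem \ref{geoclass} together with Theorem \ref{thm:Gamma_L_class} to count orbits of projection vertices.

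For the direction $(ii)\Rightarrow(i)$, the strategy is direct. Assume $\beta\in\PGaL(n,q^n)$ satisfies $\mathbf{\Sigma}^\beta=\Sigma$ and, say, $\Gamma=\Gamma_f^\beta$ (the case $\Gamma=\Gamma_{\hat f}^\beta$ being symmetric). Since $\beta$ is a collineation of $\Sigma^*$, it transports the triple $(\mathbf{\Sigma},\Gamma_f,\Lambda_f)$ onto $(\Sigma,\Gamma,\Lambda_f^\beta)$, hence $\mathrm{p}_{\Gamma_f,\Lambda_f}(\mathbf{\Sigma})\cong\mathrm{p}_{\Gamma,\Lambda_f^\beta}(\Sigma)$. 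Combined with Theorem \ref{thm:embed} and the elementary observation that projecting $\Sigma$ from a fixed vertex $\Gamma$ onto any line complementary to $\Gamma$ yields $\PGaL$-equivalent linear sets (the natural perspective being a projectivity), we conclude $L_f\cong \mathrm{p}_{\Gamma,\Lambda_f^\beta}(\Sigma)\cong \mathrm{p}_{\Gamma,\Lambda}(\Sigma)=L$.

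For the converse $(i)\Rightarrow(ii)$, I would start from $L\cong L_f\cong \mathrm{p}_{\Gamma_f,\Lambda_f}(\mathbf{\Sigma})\cong \mathrm{p}_{\Gamma_{\hat f},\Lambda_{\hat f}}(\mathbf{\Sigma})$ supplied by Theorem \ref{thm:embed}. Invoking \cite[Theorem 2]{Csajbok_Zanella} and \cite[Theorem 3]{LavrauwVandeVoorde}, which jointly characterize $\PGaL$-equivalence of two projections of canonical subgeometries via a single collineation of the ambient $\Sigma^*$, I would produce $\beta_0\in\PGaL(n,q^n)$ with $\mathbf{\Sigma}^{\beta_0}=\Sigma$, $\Lambda_f^{\beta_0}=\Lambda$, and $\mathrm{p}_{\Gamma_f^{\beta_0},\Lambda}(\Sigma)=L$. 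Under the assumption $\Lambda_f=\Lambda_{\hat f}$, the same $\beta_0$ also sends $\Lambda_{\hat f}$ to $\Lambda$, so $\Gamma_{\hat f}^{\beta_0}$ is likewise a projection vertex for $L$ within the fixed pair $(\Sigma,\Lambda)$. At this point the orbit count of Theorem \ref{geoclass} yields exactly $c_{\GaL}(L)\le 2$ (by Theorem \ref{thm:Gamma_L_class}) $\Aut(\Sigma)$-orbits of such vertices: in the simple case both $\Gamma$ and $\Gamma_f^{\beta_0}$ lie in the unique orbit, and choosing $\alpha\in\Aut(\Sigma)$ with $(\Gamma_f^{\beta_0})^\alpha=\Gamma$, setting $\beta=\beta_0\alpha$ finishes the argument. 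In the non-simple case, the last assertion of Theorem \ref{thm:embed} applied with $\Lambda_f=\Lambda_{\hat f}$ forces $\Gamma_f,\Gamma_{\hat f}$ (and therefore $\Gamma_f^{\beta_0},\Gamma_{\hat f}^{\beta_0}$) to represent the two distinct $\Aut(\Sigma)$-orbits, so $\Gamma$ coincides with one of them up to $\Aut(\Sigma)$, again yielding the desired $\beta$.

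The main obstacle I anticipate is the careful bookkeeping around the target line in the converse: the orbit count in Theorem \ref{geoclass} is formulated with respect to a fixed line $\Lambda$, whereas the canonical representatives $\Gamma_f$ and $\Gamma_{\hat f}$ are tethered to their own (potentially distinct) lines $\Lambda_f$ and $\Lambda_{\hat f}$. The dichotomy ``$L$ simple or $\Lambda_f=\Lambda_{\hat f}$'' is precisely what reconciles this discrepancy, either by collapsing the orbit count to one, or by placing both representatives into a common projective setup where they can be matched against the two $\Aut(\Sigma)$-orbits of vertices for $L$.
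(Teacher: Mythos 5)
Your direction $(ii)\Rightarrow(i)$ is fine and matches the paper (it is exactly what \cite[Theorem 2]{Csajbok_Zanella} and \cite[Theorem 3]{LavrauwVandeVoorde} deliver). The converse, however, has a genuine gap at its very first step: you claim that from $L_f\cong L$ those same two results produce a collineation $\beta_0$ with $\mathbf{\Sigma}^{\beta_0}=\Sigma$, $\Lambda_f^{\beta_0}=\Lambda$ and $\mathrm{p}_{\Gamma_f^{\beta_0},\Lambda}(\Sigma)=L$. They do not: both cited theorems only give the ``easy'' implication (a collineation carrying one projection configuration to the other forces equivalence of the projected linear sets), which is why the paper uses them only for $(ii)\Rightarrow(i)$; extracting an ambient collineation from the mere equivalence $L_f\cong L$ is precisely the hard content of the theorem. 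Worse, the specific $\beta_0$ you ask for need not exist at all, because (ii) does not align the lines and the pair $(\Sigma,\Lambda)$ carries invariants that $(\mathbf{\Sigma},\Lambda_f)$ need not share. Concretely, $\Lambda_f=\langle P,P^{\sigma^m}\rangle$ is disjoint from $\mathbf{\Sigma}$, but the hypotheses of the theorem allow configurations in which $\Lambda$ meets $\Sigma$: starting from $W=U_f\subset R\cong\F_{q^n}^2$, pick $0\neq w_1\in W$, a complement $Z$ of $R$, an $\F_q$-basis $w_1,\dots,w_n$ of $W$ with $w_1,w_n$ $\F_{q^n}$-independent, and set $V=\langle w_1,\,w_2+z_2,\dots,w_{n-1}+z_{n-1},\,w_n\rangle_{\F_q}$ with $z_2,\dots,z_{n-1}$ a basis of $Z$; then $\Sigma=L_V$ is a canonical subgeometry, $\Gamma=\PG(Z)$ and $\Lambda=\PG(R)$ satisfy all the hypotheses, $\mathrm{p}_{\Gamma,\Lambda}(\Sigma)=L_W\cong L_f$, yet $\langle w_1\rangle_{\F_{q^n}}\in\Lambda\cap\Sigma$, so no collineation can send $(\mathbf{\Sigma},\Lambda_f)$ to $(\Sigma,\Lambda)$. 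Hence the subsequent orbit count via Theorem \ref{geoclass} never gets off the ground as written.

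What the paper does instead is to reduce (via Theorem \ref{thm:embed}) to the situation where the line and the vertex are already the given $\Lambda$ and $\Gamma$, and then work at the vector-space level: writing $\boldsymbol{W}_i=(\boldsymbol{V}+Z_i)\cap R$ and $W=(V+Z)\cap R$, Pepe's result (Theorem \ref{thm:Gamma_L_class}) yields a semilinear map $\gamma$ of $R$ with $\boldsymbol{W}_1^\gamma=W$ or $\boldsymbol{W}_2^\gamma=W$, which is then extended by hand to a collineation of $\Sigma^*$ fixing $\Gamma$ setwise and carrying $\mathbf{\Sigma}$ onto $\Sigma$; this is the step your proposal replaces by an unjustified (and in general false) alignment of lines. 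A weaker version of your idea could possibly be salvaged --- take any $\beta_1$ with $\mathbf{\Sigma}^{\beta_1}=\Sigma$ and $\Gamma_f^{\beta_1}\cap\Lambda=\emptyset$, use the perspectivity with centre $\Gamma_f^{\beta_1}$ to see that $\mathrm{p}_{\Gamma_f^{\beta_1},\Lambda}(\Sigma)\cong L$, and only then invoke Theorem \ref{geoclass} --- but the existence of such a $\beta_1$ and the orbit bookkeeping in Theorem \ref{geoclass} (whose orbits are taken under $\Aut(\Sigma)$, which does not stabilize $\Lambda$) would themselves require proof, none of which is supplied.
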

\begin{proof}
$(i) \Rightarrow (ii)$ Let us suppose that  $L_f \cong L$.  If the $\GaL$-class of $L_f$, and hence of $L$, is one then the result follows from \cite[Theorem 6]{Csajbok_Zanella}. Suppose that $c_{\GaL}(L_f)=c_{\GaL}(L)=2$ and $\Lambda_f=\Lambda_{\hat{f}}$. Let us put $\Gamma_1=\Gamma_f$, $\Gamma_2=\Gamma_{\hat{f}}$  and $\boldsymbol{\Lambda}=\Lambda_{f}=\Lambda_{\hat{f}}$ and  consider $L_1=\mathrm{p}_{\Gamma_1,\boldsymbol{\Lambda}}(\boldsymbol{\Sigma})$ and $L_2=\mathrm{p}_{\Gamma_2,\boldsymbol{\Lambda}}(\boldsymbol{\Sigma})$. 
By Theorem \ref{thm:embed}, there exists  $ \alpha_i: \boldsymbol{\Lambda}\longrightarrow \Lambda$ such that $L_i^{\alpha_i}=L$, $i=1,2$. Then  $\alpha_i$ can be extended to an element, which we indicate with the same symbol $\alpha_i$, belonging to  $\PGaL(n, q^n)$  such that $\Gamma_i^{\alpha_i}=\Gamma$, $i=1,2$. Thus, $L=L_i^{\alpha_i}=\mathrm{p}_{\Gamma,{\Lambda}}(\boldsymbol{\Sigma}^{\alpha_i})$, $i=1,2$.\\
In order to get $(ii)$, it is enough to prove the existence of either a map $\phi\in \PGaL(n,q^n)$ such that $\Gamma^{\phi}=\Gamma$, $\boldsymbol{\Sigma}^{\alpha_1 \phi}=\Sigma$ or of a map $\psi \in \PGaL(n,q^n)$ such that $\Gamma^{\psi}=\Gamma$, $\boldsymbol{\Sigma}^{\alpha_2\psi}=\Sigma$. Indeed, if such a $\phi$ or $\psi$ did exist, then we would get the result by putting $\beta=\alpha_1\phi$ or $\beta=\alpha_2 \psi$, respectively. In other words, it is enough to prove the assertion when $\boldsymbol{\Lambda}=\Lambda$ and either $\Gamma_1=\Gamma$ and $L_1=L$ or $\Gamma_2=\Gamma$ and $L_2=L$.
Let $\boldsymbol{\Sigma}=L_{\boldsymbol{V}}$ with 
\begin{equation*}
    \boldsymbol{V}=\{(x,x^q,\ldots,x^{q^{n-1}}) \colon x \in \F_{q^n}\}
\end{equation*}
and $V$ be an $n$-dimensional $\F_q$-subspace of $\F_{q^n}^n$ such that $\Sigma=L_V$. Since $\boldsymbol{\Sigma}$ and $\Sigma$ are canonical subgeometries, we have that $\langle \boldsymbol{V} \rangle_{\F_{q^n}}=\langle V \rangle_{\F_{q^n}}=\F_{q^n}^n$.
 Moreover, let $\boldsymbol{V} = \langle \textbf{v}_1, \textbf{v}_2,\ldots, \textbf{v}_n \rangle_{\F_q}$ and $V = \langle v_1, v_2,\ldots,v_n \rangle_{\F_q} $. Also let $\Gamma_i=\PG(Z_i,q^n)$, $\boldsymbol{\Lambda} = \PG(R,q^n)$.\\
Note that
$\boldsymbol{W}_i =(\boldsymbol{V} +Z_i)\cap R$  and  $W =(V+Z) \cap R$ are two $n$-dimensional $\F_q$-vector spaces in $R$. 
As either  $\Gamma_1=\Gamma$ and $L_1=\mathrm{p}_{\Gamma_1,\boldsymbol{\Lambda}}(\boldsymbol{\Sigma})=\mathrm{p}_{\Gamma,\boldsymbol{\Lambda}}(\Sigma)=L$ or $\Gamma_2=\Gamma$ and $L_2=\mathrm{p}_{\Gamma_2,\boldsymbol{\Lambda}}(\boldsymbol{\Sigma})=\mathrm{p}_{\Gamma,\boldsymbol{\Lambda}}(\Sigma)=L$, by Theorem \ref{thm:Gamma_L_class}, it follows that there exists a non-singular $\F_{q^n}$-semilinear map $\gamma: R \rightarrow R$ such that  either $\boldsymbol{W}^\gamma_1=W$ or $\boldsymbol{W}_2^\gamma=(\boldsymbol{W}_1^{\perp_{\beta'}})^\gamma=W$ where $\perp_{\beta'}$ is the orthogonal complement map defined by a sesquilinear form $\beta'$ on the lattices of the $\F_{q}$-subspaces of $R \cong \F_{q^n}^2$ seen as a $2n$-dimensional space over $\F_q$. Moreover, $\boldsymbol{W}_1$ and $\boldsymbol{W}_2$ are not on the same orbit under the action of the group of semilinear maps of $R$ in itself, otherwise $L_f$, and hence also $L$, would not have $\GaL$-class two.\\
Assume first that $\boldsymbol{W}_1^\gamma=W$ and let $\Gamma \in \{\Gamma_1, \Gamma_2\}$ and, hence, $Z \in \{Z_1,Z_2\}$.  We show that the case $\boldsymbol{W}_1^\gamma=W$  and $\Gamma_2=\Gamma$  cannot occur. Indeed, if $\boldsymbol{W}_1^\gamma=W$ and $Z=Z_2$,  then the map $\gamma$ can be extended to non-singular semilinear map, saying $\gamma$ again, from $\F^n_{q^n}$ in itself such that $Z_2^\gamma=Z_2$. Then for each vector $v \in \F_{q^n}^n$ and $\mu \in \F_{q^n}$, we have $(\mu v )^\gamma=\mu^{p^\ell}v^\gamma$ for some integer $\ell$. Since $R+Z$ and $\boldsymbol{W}_2+Z$ are direct sums, for any $\textbf{v} \in \boldsymbol{V} \setminus \{\textbf{0}\}$ and $z \in Z$, the intersection $(\langle \textbf{v} \rangle_{\F_{q}} +Z) \cap R$ is a one-dimensional $\F_q$-subspace, say $ \langle \textbf{w} \rangle_{\F_{q}}$ with $\textbf{w} \in \boldsymbol{W}_2$. This implies $h \textbf{w}=\textbf{v}+z'$ for some $z' \in Z$ and $h \in \F_q$ and hence $\textbf{v}+z=h\textbf{w}+(z-z') \in \boldsymbol{W}_2+Z$. Then, $\boldsymbol{W}_2+Z=\boldsymbol{V}+Z$and hence $\boldsymbol{W}_1 \subseteq \boldsymbol{V}+Z_2=\boldsymbol{W}_2+Z_2$. Since $\boldsymbol{W}_1 \subseteq R$ we get that $\boldsymbol{W}_1 \subset 
(\boldsymbol{V}+Z_2) \cap R=\boldsymbol{W}_2$ and hence $\boldsymbol{W}_1=\boldsymbol{W}_2$, a contradiction.\\
Hence, let us suppose that $\boldsymbol{W}_1^\gamma=W$  and $Z=Z_1$. Similar argument shows that $\boldsymbol{W}_1+Z=\boldsymbol{V}+Z$ and $W+Z=V+Z$. 
Then
\begin{equation}
    V+Z=W+Z=\boldsymbol{W}^\gamma_1+Z^\gamma=(\boldsymbol{W}_1+Z)^\gamma=(\boldsymbol{V}+Z)^\gamma
\end{equation}
and hence for any $i=1,\ldots,n$, $\textbf{v}^\gamma_i=v'_i+z_i$ for some $v'_i \in V$ and $z_i \in Z$.
We will show that $v_1',\ldots,v_n'$ is an $\F_q$ basis of $V$. Indeed, suppose that $\sum_{i=1}^n \lambda_i v'_i=0$ for $\lambda_i \in \F_q$. Therefore, $\sum_{i=1}^n \lambda_i \textbf{v}_i^\gamma=\sum_{i=0}^n \lambda_i z_i$. As on the left-hand since $\textbf{v}^\gamma_1,\textbf{v}_2^\gamma,\ldots, \textbf{v}_n^\gamma$ are $\F_q$ independent, it follows that either $\lambda_i=0$ or, saying $z=\sum_{i=1}^n \lambda_i \textbf{v}_i^\gamma=\sum_{i=0}^n \lambda_i z_i$, then $z \in \boldsymbol{V}^\gamma \cap Z$. Hence in the latter case $z^{\gamma^{-1}} \in \boldsymbol{V} \cap Z$, a contradiction. Since $\Sigma=L_V$ is a subgeometry, $v'_1,v'_2,\ldots,v'_n$ are linearly independent over $\F_{q^n}$.
Let $\rho$ be the $\F_{q^n}$-semilinear
map of $\F_{q^n}^n$ such that $\textbf{v}_i^\rho=  v'_i$ for each $i = 1, 2,\ldots, n$ and $(\mu\textbf{v})^\rho=\mu^{p^\ell}\textbf{v}^\rho$    for any $\mu \in \F_{q^n}$. If $P = \langle \textbf{z} \rangle_{\F_{q^n}} \in \Gamma$, then we have $\textbf{z} =\sum^n_{i=1} a_i\textbf{v}_i$ for some $a_i \in \F_{q^n}$. Then 
\begin{equation*}
\begin{split}
\textbf{z}^\rho=\sum_{i=1}^na_i^{p^{\ell}}v'_i=
\sum_{i=1}^na_i^{p^{\ell}}(\textbf{v}_i^\gamma-z_i)=\textbf{z}^\gamma-\sum_{i=1}^na_i^{p^{\ell}}z_i \in Z
\end{split}
\end{equation*}
and hence the collineation induced by $\rho$ maps $\Gamma$ in $\Gamma$ and $\boldsymbol{\Sigma}$ into $\Sigma$.
Finally, a similar argument applies to the case $\boldsymbol{W}_2^\gamma=W$.\\
$(ii) \Rightarrow (i)$. It follows \cite[Theorem 2]{Csajbok_Zanella} and \cite[Theorem 3]{LavrauwVandeVoorde}. 
\end{proof} 

\label{conjgroup}
    Let  $\Sigma_1$ and $\Sigma_2$ be two canonical subgeometries of $\Sigma^*$ and let us consider $\mathbb{G}_1=\langle \sigma_1\rangle$ and $\mathbb{G}_2$ the subgroups of $\mathrm{P}\Gamma\mathrm{L}(n,q^n)$ fixing pointwise the canonical subgeometries $\Sigma_1$ and $\Sigma_2$ of $\Sigma^*$, respectively. If $\beta$ is a collineation of $\Sigma^*$ such that $\Sigma_1^{\beta}=\Sigma_2$, then $\mathbb{G}_2=\mathbb{G}_1^\beta$, i.e. they are conjugate under $\beta$. Indeed, for any point $P \in \Sigma_2$ we have
    \begin{equation*}
    P^{\beta^{-1}\sigma_1 \beta}=((P^{\beta^{-1}})^{\sigma_1 })^\beta=P^{\beta^{-1} \beta}=P
    \end{equation*}
    Analogously, it is straightforward to see that $\mathrm{Aut}(\Sigma_2)= \mathrm{Aut}(\Sigma_1)^\beta$.
Hence, as byproduct of Theorem \ref{thm:embed} and Theorem \ref{thm:move-gamma}, we can state the following.

\begin{proposition}\label{prop:pointgamma}
    Let  $\Sigma \cong \PG(n - 1, q)$ be a canonical subgeometry of $\Sigma^*= \PG(n - 1, q^n)$. Let $L$  be a linear set contained in a line  $\Lambda$ of $\Sigma^*$ such that $L=\mathrm{p}_{\Gamma,\Lambda}(\Sigma)$, where $\Gamma$ is an $(n - 3)$-subspace of $\Sigma^*$ such that
$\Gamma \cap \Sigma = \emptyset= \Gamma \cap \Lambda$. If
 $ L$ is equivalent to a maximum scattered linear set of $\PG(1,q^n)$ not of pseudoregulus type $L_f$, with $f(x)=\sum_{i \in I}a_ix^{q^{si}}$, $m=\deg_{q^s}f(x)$ and $I=\mathrm{supp}_{q^s}f(x)$ such that either $L_f$ is simple or $\Lambda_f=\Lambda_{\hat{f}}$ (cf. \eqref{gammaf} and \eqref{gammahatf}),  then 
there exists a collineation $\sigma$ of $\Sigma^*$ fixing $\Sigma$ pointwise and an imaginary point $P$ of $\Sigma^*$ (wrt $\Sigma)$ such that either
\begin{equation}\label{gammaA}
\Gamma= \langle P^{\sigma^j}, Q_i \colon j \not \in I , j \neq 0 \, \textnormal{and}\, i \in I\setminus\{m\}\rangle, 
\end{equation}
where $Q_i \in \langle  P^{\sigma^i}, P^{\sigma^m} \rangle $, $i \in  I \setminus \{m\}$, or
\begin{equation}\label{gammahat}
\Gamma=\langle P^{\sigma^j}, Q_i \colon j \not \in \hat{I} , j \neq 0 \, \textnormal{and}\, i \in \hat{I}\setminus\{\hat{m}\}\rangle,
\end{equation}
where $Q_i \in \langle  P^{\sigma^i}, P^{\sigma^{\hat{m}}} \rangle $, $i \in  \hat{I} \setminus \{\hat{m}\}$ and $\hat{I}=\mathrm{supp}\hat{f}(x)$, $\hat{m}=\deg_{q^s} \hat{f}(x)$.

\end{proposition}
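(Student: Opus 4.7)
The plan is to deduce Proposition \ref{prop:pointgamma} directly from Theorem \ref{thm:embed} and Theorem \ref{thm:move-gamma}, combined with the conjugation principle for pointwise stabilizers recalled just before the statement; no further technical ingredient is needed. First, applying Theorem \ref{thm:embed} to the linear set $L_f$ produces a canonical subgeometry $\boldsymbol{\Sigma}$ of $\Sigma^*$, a generator $\tau$ of the subgroup of $\PGaL(n,q^n)$ fixing $\boldsymbol{\Sigma}$ pointwise, and an imaginary point $P_0$ of $\Sigma^*$ with respect to $\boldsymbol{\Sigma}$, such that $L_f$ is equivalent to both $\mathrm{p}_{\Gamma_f,\Lambda_f}(\boldsymbol{\Sigma})$ and $\mathrm{p}_{\Gamma_{\hat f},\Lambda_{\hat f}}(\boldsymbol{\Sigma})$, where the vertices $\Gamma_f$ and $\Gamma_{\hat f}$ are given by the explicit spanning descriptions \eqref{gammaf} and \eqref{gammahatf} in terms of $P_0$ and $\tau$.

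Next, I would invoke Theorem \ref{thm:move-gamma}: since $L \cong L_f$ and either $L_f$ is simple or $\Lambda_f=\Lambda_{\hat f}$, this theorem yields a collineation $\beta$ of $\Sigma^*$ with $\boldsymbol{\Sigma}^\beta=\Sigma$ and either $\Gamma=\Gamma_f^\beta$ or $\Gamma=\Gamma_{\hat f}^\beta$. I would then transport the data to $\Sigma$ by setting $\sigma:=\beta^{-1}\tau\beta$ and $P:=P_0^\beta$. By the conjugation identity recalled before the proposition, $\sigma$ generates the subgroup of $\PGaL(n,q^n)$ fixing $\Sigma$ pointwise; and since $(P_0^{\tau^j})^\beta = P^{\sigma^j}$ for every $j$, the collineation $\beta$ sends $\mathcal{L}_{P_0,\tau}$ bijectively onto $\mathcal{L}_{P,\sigma}$, so the latter is $(n-1)$-dimensional and hence $P$ is imaginary with respect to $\Sigma$.

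Finally, applying $\beta$ to the spanning \eqref{gammaf} (respectively \eqref{gammahatf}) yields the form \eqref{gammaA} (respectively \eqref{gammahat}): each generator $P_0^{\tau^j}$ with $j \notin I$, $j \neq 0$, maps to $P^{\sigma^j}$, and each auxiliary point $Q_i \in \langle P_0^{\tau^i}, P_0^{\tau^m}\rangle$ maps to a point of $\langle P^{\sigma^i}, P^{\sigma^m}\rangle$ by incidence preservation, and analogously in the $\hat f$ case. Since Theorems \ref{thm:embed} and \ref{thm:move-gamma} already carry the substantive content, no genuine obstacle remains; the only point demanding care is the bookkeeping of the conjugation $\sigma = \beta^{-1}\tau\beta$, so that powers of $\tau$ applied to $P_0$ are correctly transported by $\beta$ to the corresponding powers of $\sigma$ applied to $P$. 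This is the step I would write out most carefully, but it amounts to a routine verification.
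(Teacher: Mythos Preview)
Your proposal is correct and follows essentially the same route as the paper: invoke Theorem~\ref{thm:move-gamma} to obtain $\beta$ with $\boldsymbol{\Sigma}^\beta=\Sigma$ and $\Gamma\in\{\Gamma_f^\beta,\Gamma_{\hat f}^\beta\}$, then set $P=\boldsymbol{P}^\beta$, conjugate the generator, and transport the spanning descriptions of $\Gamma_f$, $\Gamma_{\hat f}$ through $\beta$ using the identity $(\boldsymbol{P}^{\tau^j})^\beta=P^{\sigma^j}$. The only cosmetic difference is in the $\hat f$ case: the paper switches to the inverse generator $\sigma=\beta^{-1}\boldsymbol{\sigma}^{s(n-1)}\beta$ and appeals to Remark~\ref{aggiunto} (with $\tilde m=\min\hat I$), which has the effect of recasting the transported $\Gamma_{\hat f}$ again in the shape \eqref{gammaA}, whereas you keep the same generator throughout and land directly on \eqref{gammahat}; both are valid and yours is the more direct of the two.
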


\begin{proof}
By Theorem \ref{thm:move-gamma}, there exists a collineation $\beta$ of $\Sigma^*$ such that $\mathbf{\Sigma}^{\beta}=\Sigma$ and either $\Gamma = \Gamma^\beta_f$ or  $\Gamma = \Gamma^\beta_{\hat{f}}$ where $\Gamma_f$ and $\Gamma_{\hat{f}}$ are defined as in \eqref{gammaf} or \eqref{gammahatf}. If $\Gamma = \Gamma_f^\beta$, the result follows putting $\sigma=\beta^{-1}\boldsymbol{\sigma}^{s}\beta$ (cf. \eqref{eq:order_n_collineation}), $P=\boldsymbol{P}^\beta$ and $Q_i=\boldsymbol{Q}^\beta_i$ where $\boldsymbol{P}$ and $\boldsymbol{Q}_i$ are the points as in \eqref{gammaf} of the statement of Theorem \ref{thm:embed}, and noting that $P^{\sigma^j}=\boldsymbol{P}^{\beta\sigma^j}=\boldsymbol{P}^{\boldsymbol{\sigma}^{sj}\beta}$, $j=1,\ldots,n-1$. If otherwise $\Gamma = \Gamma_{\hat f}^\beta$ the result instead follows by putting $\sigma=\beta^{-1}\boldsymbol{\sigma}^{s(n-1)}\beta$, $P={\boldsymbol P}^{\beta}$, ${\boldsymbol T}^{\beta}_i=Q_i$ and taking into account Remark \ref{aggiunto}.
\end{proof}

In particular, if $L \subset \Lambda$ is simple,  then $\PGaL(n,q^n)$ acts transitively  on the vertices of any linear set equivalent to $L$, while the group $\Aut(\Sigma)$ acts transitively on the subspaces $\Gamma$ such that $L=\mathrm{p}_{\Gamma,\Lambda}(\Sigma)$.

Finally, we will state Theorem \ref{GPT} in the case of a linear set contained in a projective line of $\PG(n-1,q^n)$.

\begin{theorem} \label{inverseline}
     Let $I\subseteq\{1,\dots, n-1\}$ and $m$ denotes the maximum integer in $I$. Let $\Sigma$ be a canonical subgeometry of $\Sigma^*=\PG(n-1,q^n)$ and consider an $(n-3)$-dimensional subspace $\Gamma$ and a line $\Lambda$ of $\Sigma^*$ such that  $\Gamma \cap \Sigma = \emptyset = \Lambda  \cap \Gamma $. \\
     Assume  $L= \mathrm{p}_{\Gamma,\Lambda}(\Sigma)$ is a linear set  $(1,n-2)$-evasive of rank $n$.
     If there exists a point $P$ with the property that
     \begin{equation*}
         \Gamma=\langle P^{\sigma^j}, Q_i ~|~ j\notin I,\, j \neq 0 \, \text{ and } i\in I\setminus\{m\}\rangle
     \end{equation*}
     where $Q_i \in \langle P^{\sigma^m}, P^{\sigma^i}\rangle$ and $\sigma$ is a generator of the subgroup of $ \PGaL(n,q^n)$ fixing $\Sigma$ pointwise,
     then $P$ is an imaginary point (wrt $\Sigma)$ and  $L \cong L_f$ where $f(x)=\sum_{i \in I} a_i x^{q^{si}}$ and $\gcd(s,n)=1$ for some integer $1 \leq s \leq n-1$.
\end{theorem}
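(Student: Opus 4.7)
The statement is the specialization to $r=2$ of Theorem~\ref{GPT}, so the plan is to follow that argument in the present simpler setting. The first step is to verify that the point $P$ is imaginary with respect to $\Sigma$, i.e., that $\dim\mathcal{L}_P=n-1$. By hypothesis every generator of $\Gamma$ lies in $\mathcal{L}_P$: the conjugates $P^{\sigma^j}$ trivially, and each $Q_i$ because it belongs to the line $\langle P^{\sigma^i},P^{\sigma^m}\rangle\subseteq\mathcal{L}_P$. Hence $\Gamma\subseteq\mathcal{L}_P$ and thus $n-3\leq\dim\mathcal{L}_P\leq n-1$. If $\dim\mathcal{L}_P=n-2$, then by Grassmann's formula $\Lambda\cap\mathcal{L}_P$ is a single point of $\Lambda$; since $\mathcal{L}_P$ is $\sigma$-invariant it is a subspace of $\Sigma$, so $\mathrm{rk}(\mathcal{L}_P\cap\Sigma)=\dim\mathcal{L}_P+1=n-1$. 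Exactly as in the proof of Theorem~\ref{GPT}, this yields $\mathrm{wt}_L(\Lambda\cap\mathcal{L}_P)=n-1$, contradicting the $(1,n-2)_q$-evasiveness of $L$. Therefore $\dim\mathcal{L}_P=n-1$, i.e., $P$ is imaginary with respect to $\Sigma$.

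Once $P$ is known to be imaginary, I would reduce to the canonical configuration: since any two canonical subgeometries of $\Sigma^*$ are $\PGL$-equivalent, we may assume $\Sigma=\boldsymbol{\Sigma}$ as in \eqref{Sigma}; any generator of the order-$n$ cyclic subgroup of $\PGaL(n,q^n)$ fixing $\boldsymbol{\Sigma}$ pointwise has then the form $\boldsymbol{\sigma}^s$ with $\gcd(s,n)=1$, so $\sigma=\boldsymbol{\sigma}^s$; and by Proposition~\ref{prop:transitivity} (which for $r=2$ is \cite[Proposition 3.1]{Bonoli-Polverino}) we may further suppose $P=\langle(1,0,\ldots,0)\rangle_{\F_{q^n}}$. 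In these coordinates $P^{\sigma^k}$ is the point carrying a single $1$ in position $sk\bmod n$ and $0$ elsewhere.

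With this normalization, every spanning point of $\Gamma$ vanishes at the $0$-th coordinate, so $\Gamma\subseteq\{x_0=0\}$. Writing $Q_i=\alpha_i P^{\sigma^i}+\beta_i P^{\sigma^m}$ for $i\in I\setminus\{m\}$, the requirement $\dim\Gamma=n-3$ forces $\alpha_i,\beta_i\in\F_{q^n}^*$, and after a harmless rescaling $\Gamma$ is cut out by
\begin{equation*}
x_0=0,\qquad \sum_{i\in I}a_i\,x_{si}=0,
\end{equation*}
for suitable $a_i\in\F_{q^n}^*$ with $a_m=1$. Finally, choosing $\Lambda=\langle P,P^{\sigma^m}\rangle$ (which is disjoint from both $\boldsymbol{\Sigma}$ and $\Gamma$, and produces a projection equivalent to the original), a direct computation of the projection of a generic point $\langle(x,x^q,\ldots,x^{q^{n-1}})\rangle_{\F_{q^n}}$ of $\boldsymbol{\Sigma}$ from $\Gamma$ onto $\Lambda$ gives coordinates $\bigl(x,\,\sum_{i\in I}a_ix^{q^{si}}\bigr)$ in the frame $(P,P^{\sigma^m})$, so $L\cong L_f$ with $f(x)=\sum_{i\in I}a_ix^{q^{si}}$, as claimed. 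The most delicate point of the argument is the evasive bookkeeping that rules out $\dim\mathcal{L}_P=n-2$; once $P$ is imaginary, the forced nondegeneracy $\alpha_i,\beta_i\neq 0$ and the final projection formula are routine.
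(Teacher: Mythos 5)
Your proof is correct and takes essentially the same route as the paper, which obtains this statement simply as the $r=2$ specialization of Theorem~\ref{GPT}: the same evasiveness/weight argument rules out $\dim\mathcal{L}_P=n-2$, the same appeal to Proposition~\ref{prop:transitivity} normalizes $\Sigma$, $\sigma=\boldsymbol{\sigma}^s$ and $P$, and the same computation of the equations of $\Gamma$ and of the projection onto $\langle P,P^{\sigma^m}\rangle$ gives $L\cong L_f$. The only slight overstatement—that $\dim\Gamma=n-3$ forces $\alpha_i,\beta_i\in\F_{q^n}^*$ (a single $Q_i$ equal to $P^{\sigma^i}$ or $P^{\sigma^m}$ is compatible with that dimension)—is harmless for the statement as written, since a vanishing coefficient just yields $f$ supported in a subset of $I$, and the paper's proof of Theorem~\ref{GPT} glosses over the same point.
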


\begin{remark}
\begin{enumerate}
\item []
\item [$(i)$]  Note that if $L_f$ is equivalent to $L=\mathrm{p}_{\Gamma,\Lambda}(\Sigma)$ with $\Gamma$ and $\Lambda$ as Proposition \ref{directline},  $f$ is a permutation polynomial if and only if $\Sigma \cap \langle \Gamma, P \rangle=\emptyset$. 

 \item  [$(ii)$]  The imaginary point $P$ (wrt $\Sigma$) in Theorem  \ref{inverseline} is not unique.  For instance, let  $L^{4,6}_{\delta}$ be  as in \eqref{tri11}, $\delta^2+\delta=1$, then  $\delta \in \F_{q^2}$, and let us consider the point $A$ with homogeneous coordinates $(0,-1,0,-1,0,1+\delta)$ in the frame  $(P,P^\sigma,\ldots,P^{\sigma^{5}},U)$ with $P=\langle v \rangle_{\F_{q^6}}$ and $U=\langle v+v^\sigma+\ldots+v^{\sigma^{n-1}} \rangle_{\F_{q^6}}$. Then,
    \begin{equation*}
    \begin{vmatrix}
    0 & -1 & 0 & -1 & 0 & 1+\delta\\
    1+\delta^q & 0 & -1 &  0 & -1 & 0\\ 
    0 & 1+\delta & 0 & -1 &  0 & -1\\ 
    -1 & 0 & 1+\delta^q & 0 & -1  &0\\
    0 &-1& 0& 1+\delta& 0 &-1\\
    -1 & 0 & -1 & 0 & 1+\delta^q & 0 
    \end{vmatrix}
    =(1-\delta) (2+\delta)^2 ( \delta^q-1) (2 + \delta^q)^2 \neq 0.
    \end{equation*}
and hence $\dim \mathcal{L}_A=5$ and the solid $\Gamma$ as in \eqref{gamma} is spanned by the points $A^{\sigma^2},A^{\sigma^4}$, $Q'_1=\langle(0,0,-2-\delta^q,0,\delta^q+1,0)\rangle_{\F_{q^6}}$ and $Q'_2=\langle(0,0,2+\delta^q,0,-2-\delta^q,0)\rangle_{\F_{q^6}}$, where $Q_1' \in \langle A^{\sigma}, A^{\sigma^5} \rangle$ and $Q_2' \in \langle A^{\sigma^3}, A^{\sigma^5}\rangle$.
\end{enumerate}
 \end{remark}

\section{Maximum scattered linear sets which are neither of pseudoregulus nor of LP type}

As we have seen in the previous section if $L = \mathrm{p}_{\Gamma,\Lambda}(\Sigma)$ for a given subgeometry $\Sigma$ and suitable subspaces $\Gamma$ and $\Lambda$ of $\PG(n-1,q^n)$, then, in general, the intersection number of $\Gamma$ does not characterise $L$ by its own. 

In this section, we will deepen the study of projection vertices of linear sets of $PG(1,q^n)$ known to date, which are neither of pseudoregulus nor of LP type, in order to provide characterization results for all of them in term of certain properties of relevant vertices. 

In this regard, we will use a notion from classical projective geometry: the cross-ratio of four points of the projective line.\\
Let $A,B,C,D$ be four points of $\PG(1,q^n)$ such that $A, B, C$ are distinct; the \textit{cross-ratio} $(A;B;C;D)$ of these points is defined as
\begin{equation*}
    (A;B;C;D)=\frac{\begin{vmatrix}
     a_0 &  c_0  \\
      a_1 &  c_1 
 
    \end{vmatrix}
    \begin{vmatrix}
        b_0 & d_0 \\
        b_1 & d_1
    \end{vmatrix}}
    {    \begin{vmatrix}
    a_0 & d_0\\
a_1 & d_1
    \end{vmatrix}\begin{vmatrix}
      b_0 & c_0\\
b_1 & c_1  
    \end{vmatrix}},
\end{equation*}
where $A=\langle (a_0,a_1)\rangle_{\F_{q^n}},B=  \langle (b_0,b_1)\rangle_{\F_{q^n}}, C= \langle (c_0,c_1)\rangle_{\F_{q^n}},D= \langle (d_0,d_1)\rangle_{\F_{q^n}}$ , respectively.
Setting $\infty:=\frac{1}{0}$, we have
\begin{equation*}
(A;B;C;A) = \infty, \,(A;B;C;B)=0,\, (A;B;C;C)=1
\end{equation*}
Moreover, by the definition
we have

\begin{equation*}
 (A;B;C;D)=(B;A;D;C)=(C;D;A;B)=(D;C;B;A); 
\end{equation*}
and if $(A;B;C;D) = \kappa$, all the
possible values of the cross-ratio are
\begin{equation*}
\begin{matrix}
\noindent (A;B;D;C)=1/\kappa,&(A;C;B;D)=1-\kappa,
&(A;D;B;C)=(\kappa-1)/\kappa, \vspace{0.2cm} \\
(A;D;C;B) = \kappa/(\kappa-1), &(A;C;D;B)=1/(1-\kappa).
\end{matrix}
\end{equation*}
We will say that the points pair $\{A,B\}$  \textit{harmonically separates} the points pair $\{C,D\}$ if the cross-ratio $(A;B;C;D)=-1$, see \cite[Chapter 4]{Casse} and \cite{hirsh}.
Finally, it is straightforward to see that if $\phi \in \PGaL(2,q^n)$ with companion automorphism $\tau$, then 
\begin{equation*}
(A^\phi;B^\phi;C^\phi;D^\phi) = (A;B;C;D)^\tau.
\end{equation*}

\subsection{A characterization of $L_{\ell,\eta}^{3,n}$, $n \in \{6,8\}$ }

Let 
$L_{\ell,\eta}^{3,n}$ be the linear set defined as in \eqref{ex:CMPZ} 
and denote by
$U_{\ell,\eta}^{3,n} \subseteq \F_{q^n} \times \F_{q^n}$ its underlying vector space.
According to \cite{Csajbok-Marino-Polverino-Zanella}, $U^{3,n}_{\ell, \eta}$ is $\GL(2, q^n)$-equivalent to $U^{3,n}_{n-\ell, \eta^{q^{n-s}}}$
and to $U^{3,n}_{\ell+n/2,\eta^{-1}}$ as well. Thus, as indicated in \cite[Section 4]{Csajbok_Marino_Zullo}, we may assume $\ell < n/4$, and $\gcd(\ell, n/2) = 1$ and hence reduce the study to linear sets $L^{3,n}_{\ell,\eta}$ with $\ell = 1$.
Moreover, in \cite[Proposition 4.1 and 4.2]{Csajbok_Marino_Zullo}, it is proven that in both cases, the relevant linear set is simple.

\begin{theorem}\label{binom6}
  Let $L$ be a $(1,4)_q$-evasive linear set of rank $6$ contained in a line $\Lambda$ of $\PG(5, q^6)$ and  let $\Sigma$ be a canonical subgeometry of $\PG(5,q^6)$. Then $L$ is equivalent to a maximum scattered linear set of type $L_{1,\eta}^{3,6}$, for some $\eta \in \F_{q^6}$, if and only if for each solid $\Gamma$ of $\PG(5, q^6)$ such that $L = \mathrm{p}_{\Gamma ,\Lambda}(\Sigma)$, the following holds:
\begin{enumerate}
\item  there exist a generator $\sigma$ of the subgroup of $\PGaL(6, q^6)$ fixing $\Sigma$ pointwise and a  point $P \in \PG(5,q^6)$ such that
\begin{equation*}
    \Gamma = \langle P^{\sigma^i},Q : i \in \{2,3,5\} \rangle
\end{equation*} 
with $Q \in \langle P^{\sigma},P^{\sigma^4} \rangle$, 
\item the points $P^\sigma,P^{\sigma^4},Q, Q^{\sigma^3}$ are collinear, and the equation
\begin{equation}\label{equation}
    Y^2-(\Tr_{q^3/q}(\gamma)-1)Y+\N_{q^3/q}(\gamma)=0, 
\end{equation}
where $\gamma=(P^{\sigma};Q;Q^{\sigma^3};P^{\sigma^4})^{\tau}$ for some $\tau \in \mathrm{Aut}(\F_{q^6})$, admits two solutions in $\F_q$.
\end{enumerate}
\end{theorem}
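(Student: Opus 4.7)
The plan is to leverage the simplicity of $L^{3,6}_{1,\eta}$ (\cite[Proposition 4.1]{Csajbok_Marino_Zullo}) so that one may invoke Proposition \ref{prop:pointgamma} in the ($\Rightarrow$) direction and Theorem \ref{inverseline} in the ($\Leftarrow$) one, with the key computations carried out in the canonical model $(\boldsymbol{\Sigma},\boldsymbol{\sigma})$ of \eqref{Sigma}--\eqref{eq:order_n_collineation}. In that model, for $f(x) = \eta x^q + x^{q^4}$ (so $I = \{1,4\}$ and $m = 4$), Theorem \ref{Fdirect} gives the explicit vertex $\boldsymbol{\Gamma}\colon x_0 = 0,\ \eta x_1 + x_4 = 0$; taking $\boldsymbol{P} = \langle e_0\rangle_{\F_{q^6}}$ one reads off $Q = \langle e_1 - \eta e_4\rangle_{\F_{q^6}}$ and computes $Q^{\boldsymbol{\sigma}^3} = \langle -\eta^{q^3} e_1 + e_4\rangle_{\F_{q^6}}$, both lying on $\langle \boldsymbol{P}^{\boldsymbol{\sigma}}, \boldsymbol{P}^{\boldsymbol{\sigma}^4}\rangle = \langle e_1, e_4\rangle_{\F_{q^6}}$; a straightforward $2\times 2$ determinant computation then yields $(\boldsymbol{P}^{\boldsymbol{\sigma}}; Q; Q^{\boldsymbol{\sigma}^3}; \boldsymbol{P}^{\boldsymbol{\sigma}^4}) = 1/(1-\eta^{q^3+1})$, which lies in $\F_{q^3}$.

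For the forward direction, assume $L \cong L^{3,6}_{1,\eta}$. By simplicity, Proposition \ref{prop:pointgamma} applied with $I = \{1,4\}$ guarantees that for every solid $\Gamma$ with $L = \mathrm{p}_{\Gamma,\Lambda}(\Sigma)$ there exist a generator $\sigma$ of the pointwise stabiliser of $\Sigma$ and an imaginary point $P$ such that $\Gamma = \langle P^{\sigma^2}, P^{\sigma^3}, P^{\sigma^5}, Q\rangle$ with $Q \in \langle P^\sigma, P^{\sigma^4}\rangle$, which is condition $1.$ Pulling back to the ambient setting via a collineation $\beta$ satisfying $\boldsymbol{\Sigma}^\beta = \Sigma$ (furnished in the proof of Proposition \ref{prop:pointgamma}), the canonical identities for $Q$ and $Q^{\boldsymbol{\sigma}^3}$ transfer verbatim, giving the collinearity of $P^\sigma, P^{\sigma^4}, Q, Q^{\sigma^3}$ and a closed form for $\gamma = (P^\sigma;Q;Q^{\sigma^3};P^{\sigma^4})^\tau$, where $\tau$ is the companion field automorphism of $\beta$. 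The scattering criterion for $L^{3,6}_{1,\eta}$ recalled from \cite{BCM, polzullo} then forces the quadratic in $2.$ to split over $\F_q$.

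For the reverse direction, condition $1.$ and Theorem \ref{inverseline} (applicable because the evasivity hypothesis is precisely the case $r = 2,\,n = 6$ of the evasive hypothesis of Theorem \ref{GPT}) give $L \cong L_f$ for some $f(x) = a_1 x^{q^s} + a_4 x^{q^{4s}}$ with $\gcd(s,6) = 1$ and $a_1, a_4 \in \F_{q^6}^\ast$; after rescaling $a_4 = 1$ and invoking the $\GL(2,q^6)$-equivalences $L^{3,6}_{\ell,\eta} \cong L^{3,6}_{6-\ell, \eta^{q^{6-\ell}}} \cong L^{3,6}_{\ell+3, \eta^{-1}}$ from \cite{Csajbok-Marino-Polverino-Zanella}, one reduces to $s = 1$, hence $L \cong L^{3,6}_{1,\eta}$ for some $\eta \in \F_{q^6}^\ast$. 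The collinearity hypothesis together with the cross-ratio formula pin down $\eta^{q^3+1}$ as an explicit rational function of $\gamma$, and the splitting of the quadratic in $2.$ over $\F_q$ becomes the scattering criterion for $L^{3,6}_{1,\eta}$, so $L$ is equivalent to this scattered linear set.

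The most delicate step will be the reconciliation of the cross-ratio $\gamma = (P^\sigma;Q;Q^{\sigma^3};P^{\sigma^4})^\tau$ used in the statement with the scattering parameter $\gamma_p = -\eta^{q^3+1}/(1-\eta^{q^3+1})$ recalled from the preliminaries: these two quantities lie in the same Klein four-group orbit of cross-ratios of the four collinear points (with $\tau = \mathrm{id}$ one finds $\gamma_p = 1 - \gamma$). Verifying that the $\F_q$-solvability of $Y^2 - (\Tr_{q^3/q}(\gamma)-1)Y + \N_{q^3/q}(\gamma) = 0$ is unaffected by the substitutions $\gamma \leftrightarrow 1/\gamma$, $\gamma \leftrightarrow 1 - \gamma$, $\gamma \leftrightarrow \gamma/(\gamma-1)$ and by the Galois twist $\tau \in \Aut(\F_{q^6})$ -- relying on the $\Gal(\F_{q^6}/\F_q)$-invariance of $\Tr_{q^3/q}$ and $\N_{q^3/q}$ together with the standard expansion of $\N_{q^3/q}(1-\gamma)$ in terms of the elementary symmetric functions of the Galois conjugates of $\gamma$ -- is the main bookkeeping, which, combined with absorbing the choice of $s$ in Theorem \ref{inverseline} via the aforementioned equivalences, closes the proof.
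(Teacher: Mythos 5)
Your skeleton coincides with the paper's: simplicity of $L^{3,6}_{1,\eta}$ plus Proposition \ref{prop:pointgamma} for necessity, Theorem \ref{inverseline} for sufficiency, and the canonical-model computation giving $(\boldsymbol{P}^{\boldsymbol{\sigma}};\boldsymbol{Q};\boldsymbol{Q}^{\boldsymbol{\sigma}^3};\boldsymbol{P}^{\boldsymbol{\sigma}^4})=1/(1-\eta^{q^3+1})$. The gap is precisely the step you defer as ``the main bookkeeping'': the $\F_q$-solvability of $Y^2-(\Tr_{q^3/q}(\gamma)-1)Y+\N_{q^3/q}(\gamma)=0$ is \emph{not} invariant under the substitutions $\gamma\mapsto 1-\gamma$, $\gamma\mapsto 1/\gamma$, $\gamma\mapsto\gamma/(\gamma-1)$. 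For example, with $q=3$, $\F_{27}=\F_3[x]/(x^3-x+1)$ and $\gamma=2x^2$ one has $\Tr_{q^3/q}(\gamma)=1$, $\N_{q^3/q}(\gamma)=2$, so the quadratic is $Y^2+2$, with the two roots $\pm1$ in $\F_3$; but $1-\gamma=1+x^2$ gives $\Tr_{q^3/q}(1-\gamma)=2$, $\N_{q^3/q}(1-\gamma)=2$, and $Y^2-Y+2$ has no root in $\F_3$. Since, as $\eta$ varies, the cross-ratio $1/(1-\eta^{q^3+1})$ sweeps essentially all of $\F_{q^3}\setminus\{0,1\}$, no special feature of the family rescues the claimed invariance. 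Note moreover that the criterion recalled in the paper is stated for $L^{3,6}_{2,\eta}$, not for $L^{3,6}_{1,\eta}$; if your invariance held, it would make $L^{3,6}_{1,\eta}$ and $L^{3,6}_{2,\eta}$ simultaneously scattered for every $\eta$, which is not a consequence of the known results. So your reconciliation of $\gamma$ with $\gamma_p=-\eta^{q^3+1}/(1-\eta^{q^3+1})$ cannot be carried out as described, and this affects both implications.

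The correct bridge, and the one the paper uses in both directions, is to choose the right representative in the equivalence class before quoting the criterion: $L^{3,6}_{1,\eta}\cong L^{3,6}_{2,\eta^{-q^5}}$, and the criterion of \cite{BCM} and \cite[Theorem 7.3]{polzullo} applied to the latter has parameter $\alpha=-\frac{(\eta^{-q^5})^{q^3+1}}{1-(\eta^{-q^5})^{q^3+1}}=\frac{1}{1-\eta^{q^2+q^5}}$, which satisfies $\alpha^{q}=\frac{1}{1-\eta^{q^3+1}}$, i.e.\ $\alpha$ is a $q$-power Frobenius image of the cross-ratio taken exactly in the order $(P^{\sigma};Q;Q^{\sigma^3};P^{\sigma^4})$ of the statement. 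Hence the only invariances needed are $\Tr_{q^3/q}(\alpha)=\Tr_{q^3/q}(\alpha^q)$ and $\N_{q^3/q}(\alpha)=\N_{q^3/q}(\alpha^q)$, with the companion automorphism of the collineation $\beta$ absorbed into $\tau$ --- no Klein four-group identity. The same care is required in your converse: after Theorem \ref{inverseline} yields $f(x)=\eta x^{q^s}+x^{q^{4s}}$ with $s\in\{1,5\}$ (and the collinearity/distinctness giving $\N_{q^6/q^3}(\eta)\neq 1$), you must pass to $g_1(x)=\eta^{-q^5}x^{q^2}+x^{q^5}$ for $s=1$, respectively $g_2(x)=\eta^{-1}x^{q^2}+x^{q^5}$ for $s=5$, so that the quadratic of Point 2 becomes, up to Frobenius, literally the one in the scatteredness criterion. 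With that replacement your argument aligns with the paper's; the remaining parts of your outline (simplicity, collinearity via $\boldsymbol{\sigma}^3$ fixing $\langle\boldsymbol{P}^{\boldsymbol{\sigma}},\boldsymbol{P}^{\boldsymbol{\sigma}^4}\rangle$, evasivity feeding Theorem \ref{inverseline}) are sound.
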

\begin{proof}
Let us suppose that $L$ is equivalent to a maximum scattered linear set of type  $L_{1,\eta}^{3,6}$. By Proposition \ref{prop:pointgamma} and since 
$L$ is simple, Point {\em 1} holds true.
By Theorem \ref{thm:embed}, let $\Gamma_f=\langle \boldsymbol{P}^{\boldsymbol{\sigma}^{i}}, \boldsymbol{Q}: i \in \{2,3,5\} \rangle$, where $f(x)= \eta x^q+ x^{q^4}$, $\boldsymbol{P}$ an imaginary point of $\PG(5,q^6)$ (wrt $\boldsymbol{\Sigma}$), $\boldsymbol{Q} \in \langle \boldsymbol{P}^{\boldsymbol{\sigma}}, \boldsymbol{P}^{\boldsymbol{\sigma}^4} \rangle $, $P=\boldsymbol{P}^{{\beta}}$ and $Q=\boldsymbol{Q}^{\beta}$ where $\beta$ is the collineation of $\PG(5,q^6)$ such that $\Gamma_f^\beta=\Gamma$ and $\boldsymbol{\Sigma}^\beta=\Sigma$. \\
Clearly since $L_{1,\eta}^{3,6}$ is simple, all possible projection vertices for such a linear set are equivalent to $\Gamma_f$ under the action of $\PGaL(5,q^6)$. Since $\beta$ preserves the collinearity of points, in order to show Point \textit{2}, it is enough to prove  that $\boldsymbol{P}^{\boldsymbol{\sigma}},\boldsymbol{P}^{\boldsymbol{\sigma}^4},\boldsymbol{Q},\boldsymbol{Q}^{\boldsymbol{\sigma}^3}$ are collinear. Indeed, note that $\boldsymbol{\sigma}^3$ fixes the line $\langle \boldsymbol{P}^{\boldsymbol{\sigma}}, \boldsymbol{P}^{\boldsymbol{\sigma}^4} \rangle$, therefore $\boldsymbol{Q}^{\boldsymbol{\sigma}^{3}} \in \langle \boldsymbol{P}^{\boldsymbol{\sigma}}, \boldsymbol{P}^{\boldsymbol{\sigma}^4} \rangle$. Let us suppose that $\boldsymbol{P}=\langle \textbf{v} \rangle_{\F_{q^6}}$ with $\textbf{v}$ a non-zero vector belonging to $\F_{q^6}^6$. If $\boldsymbol{Q}=\langle \textbf{v}^{\boldsymbol{\sigma}}-\eta \textbf{v}^{\boldsymbol{\sigma}^4} \rangle_{\F_{q^6}}$, then $\boldsymbol{Q}^{\boldsymbol{\sigma}^3}=\langle \textbf{v}^{\boldsymbol{\sigma}^4}-\eta^{q^3} \textbf{v}^{\boldsymbol{\sigma}}  \rangle_{\F_{q^6}}$. 
Hence,
  $$(\boldsymbol{P}^{\boldsymbol{\sigma}};\boldsymbol{P}^{\boldsymbol{\sigma}^4};\boldsymbol{Q};\boldsymbol{Q}^{\boldsymbol{\sigma}^3})=
  \frac{\begin{vmatrix} 1 & 1 \\ 0 & -\eta \end{vmatrix} \begin{vmatrix} 0 & -\eta^{q^3} \\ 1 & 1 \end{vmatrix}}{\begin{vmatrix} 1 & -\eta^{q^3} \\ 0 & 1 \end{vmatrix} \begin{vmatrix} 0 & 1 \\ 1 & -\eta \end{vmatrix}}=\eta^{q^3+1}.$$
Now, the linear set $L \cong L_{1,\eta}^{3,6}$ is equivalent to $L^{3,6}_{2,\eta^{-q^5}}$ which is scattered as well. Then by \cite[Theorem 7.3]{polzullo}, the equation
\begin{equation*}
     Y^2-(\Tr_{q^3/q}(\alpha)-1)Y+\N_{q^3/q}(\alpha)=0 
 \end{equation*}
admits two solutions in $\F_q$, where $\alpha=-\frac{(\eta^{-q^5})^{q^3+1}}{1-(\eta^{-q^5})^{q^3+1}}=\frac{1}{1-\eta^{q^2+q^5}}$. Moreover, we have $\alpha^q=(\boldsymbol{P}^{\boldsymbol{\sigma}};\boldsymbol{Q};\boldsymbol{Q}^{\boldsymbol{\sigma}^3};\boldsymbol{P}^{\boldsymbol{\sigma}^4})$. 
Also, if $\rho$ is the companion automorphism of $\beta$, then
\begin{equation*}\label{crossratio}
(P^{\sigma};Q;Q^{\sigma^3};P^{\sigma^4})=(\boldsymbol{P}^{\beta \sigma};\boldsymbol{Q}^\beta; \boldsymbol{Q}^{\beta {\sigma}^3};\boldsymbol{P}^{\beta \sigma^4})=(\boldsymbol{P}^{\boldsymbol{\sigma} \beta};\boldsymbol{Q}^{\beta}; \boldsymbol{Q}^{\boldsymbol{\sigma}^3 \beta };\boldsymbol{P}^{\boldsymbol{\sigma}^4 \beta })=(\boldsymbol{P}^{\boldsymbol{\sigma}};\boldsymbol{Q}; \boldsymbol{Q}^{\boldsymbol{\sigma}^3 };\boldsymbol{P}^{\boldsymbol{\sigma}^4})^\rho.
\end{equation*}
Finally, since $\mathrm{Tr}_{q^3/q}(\alpha)=\mathrm{Tr}_{q^3/q}(\alpha^q)$ and $\mathrm{N}_{q^3/q}(\alpha)=\mathrm{N}_{q^3/q}(\alpha^q),$ Point $\textit{2}$ follows with $\gamma=\alpha^q$ and $\tau=\rho^{-1}$.

Suppose now that Points \textit{1} and $\textit{2}$ hold true. Theorem \ref{inverseline} implies that $P=\langle v \rangle_{\F_{q^6}}$ is an imaginary point (wrt $\Sigma$), $Q=\langle v^\sigma - \eta v^{\sigma^4}\rangle_{\F_{q^6}}$ and $L \cong L_f$, $f(x)=\eta x^{q^{s}}+x^{q^{4s}}$ for some $\eta \in \F_{q^6}^*$ and $s \in \{1,5\}$. Since $P,P^{\sigma},Q,Q^{\sigma^3}$ are distinct, $\mathrm{N}_{q^6/q^3}(\eta) \neq 1$. If $s=1$, since $L_f=L_{\hat{f}}$ is equivalent to $L_{g_1}$ with $g_1(x)=\eta^{-q^5}x^{q^2} + x^{q^5}$.  Point $\textit{2}$ implies that the equation
 \begin{equation*}
     Y^2-(\Tr_{q^3/q}(\alpha_1)-1)Y+\N_{q^3/q}(\alpha_1)=0 
 \end{equation*}
admits two solutions in $\F_q$, where $\alpha_1=-\frac{(\eta^{-q^5})^{q^3+1}}{1-(\eta^{-q^5})^{q^3+1}}$.
Hence, by \cite[Theorem 7.3]{polzullo}, $L_{g_1}$ is scattered and $L$ is equivalent to  a scattered linear set of type $L_{1,\eta}^{3,6}$. If $s=5$ and hence $f(x)=\eta x^{q^5}+x^{q^2}$, $L_f \cong L_{g_2}$ with $g_2(x)=x^{q^5}+\eta^{-1} x^{q^2}$. Point $\textit{2}$ implies that the equation
 \begin{equation*}
     Y^2-(\Tr_{q^3/q}(\alpha_2)-1)Y+\N_{q^3/q}(\alpha_2)=0 
 \end{equation*}
admits two roots in $\F_q$ with $\alpha_2=-\frac{\eta^{-(q^3+1)}}{1-\eta^{-(q^3+1)}}$. Then, by \cite[Theorem 7.3]{polzullo}, $g_2(x)$ is scattered and hence $L$ is scattered as well, and it is of the type $L^{3,6}_{2,\eta^{-1}} \cong L_{1,\eta^q}^{3,6}$.
\end{proof}

In the same way we get

\begin{theorem}
     Let $L$ be an $(1,6)_q$-evasive linear set of rank $8$ contained in a line $\Lambda$ of $\PG(7, q^8)$, with $q\leq 11$ or $q\geq1039891$ odd. Then, $L$ is equivalent to a maximum scattered linear set of type $L_{r,\eta}^{3,8}$, for some $\eta \in \F_{q^8}$ and $\gcd(s,8)=1$, if and only if for each $5$-dimensional subspace $\Gamma$ of $\PG(7, q^8)$ such that $L = \mathrm{p}_{\Gamma ,\Lambda}(\Sigma)$, the following hold:
     \begin{enumerate}
         \item  there exists a generator $\sigma$ of the subgroup of $\PGaL(8, q^8)$ fixing $\Sigma$ pointwise and a point $P \in \PG(7,q^8)$ such that
\begin{equation*}
    \Gamma = \langle P^{\sigma^i},Q : i  \in \{2,3,4,6,7\} \rangle
\end{equation*} 
with $Q \in \langle P^{\sigma},P^{\sigma^5} \rangle$, 
\item the points $P^\sigma,P^{\sigma^5},Q,Q^{\sigma^4}$ are collinear and pair $\{Q, Q^{\sigma^4}\}$ harmonically separates the pair $\{P^{\sigma},P^{\sigma^{5}}\}$ .
     \end{enumerate}
\end{theorem}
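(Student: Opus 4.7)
The approach mirrors the proof of Theorem \ref{binom6} closely, with the scattering condition from \cite{Timpanella_Zini} (namely $\N_{q^8/q^4}(\eta)=-1$) conveniently packaged as a single harmonic separation rather than a quadratic equation.

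For the forward direction, assume $L\cong L^{3,8}_{\ell,\eta}$. Using the $\GL(2,q^8)$-equivalences $U^{3,8}_{\ell,\eta}\cong U^{3,8}_{8-\ell,\eta^{q^{8-s}}}\cong U^{3,8}_{\ell+4,\eta^{-1}}$ recorded in \cite{Csajbok-Marino-Polverino-Zanella}, I may normalize to $\ell=1$, so that $f(x)=\eta x^q+x^{q^5}$ has support $I=\{1,5\}$ and degree $m=5$. Since $L^{3,8}_{1,\eta}$ is simple by \cite[Proposition 4.1]{Csajbok_Marino_Zullo}, Proposition \ref{prop:pointgamma} applied with this support produces Point $1$ verbatim, with the listed exponents $\{2,3,4,6,7\}$ being exactly the complement of $I\cup\{0\}$ in $\{0,1,\ldots,7\}$. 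For Point $2$, observe that $\sigma^4$ is an involution swapping $\sigma$ and $\sigma^5$ (mod $8$), so it stabilises $\langle P^{\sigma},P^{\sigma^5}\rangle$ set-wise; since $Q$ lies on this line, so does $Q^{\sigma^4}$, and the four points are collinear.

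To verify the harmonic condition, I work in the canonical model of Theorem \ref{thm:embed} with $P=\langle v\rangle$ and $Q=\langle v^{\sigma}-\eta v^{\sigma^5}\rangle$. A direct semilinear computation gives $Q^{\sigma^4}=\langle v^{\sigma^5}-\eta^{q^{4s}}v^{\sigma}\rangle$, and since $\gcd(s,8)=1$ forces $s$ odd and hence $4s\equiv 4\pmod 8$, we have $\eta^{q^{4s}}=\eta^{q^4}$. Expanding the $2\times 2$ determinants in the basis $(v^{\sigma},v^{\sigma^5})$ of the line then yields
\begin{equation*}
(P^{\sigma};P^{\sigma^5};Q;Q^{\sigma^4})=\eta^{q^4+1}=\N_{q^8/q^4}(\eta),
\end{equation*}
which equals $-1$ by \cite{Timpanella_Zini}. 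The conclusion transfers from the canonical model to the given $\Gamma\subset\PG(7,q^8)$ via the collineation $\beta$ of Theorem \ref{thm:embed}: $\beta$ preserves collinearity and cross-ratio up to its companion automorphism, and since $-1\in\F_q$ is fixed by every element of $\mathrm{Aut}(\F_{q^8})$, the harmonic condition is preserved.

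For the converse, assume Points $1$ and $2$. Theorem \ref{inverseline} gives that $P$ is imaginary, $Q=\langle v^{\sigma}-\eta v^{\sigma^5}\rangle$ for some $\eta\in\F_{q^8}^*$, and $L\cong L_f$ with $f(x)=\eta x^{q^s}+x^{q^{5s}}$ and $\gcd(s,8)=1$. Running the cross-ratio computation in reverse together with the harmonic hypothesis forces $\eta^{q^4+1}=-1$, i.e. $\N_{q^8/q^4}(\eta)=-1$; by \cite{Timpanella_Zini}, $L_f$ is scattered, and via the same three $\GL$-equivalences recalled above one identifies $L_f$ with some $L^{3,8}_{r,\eta'}$. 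The main obstacle is purely bookkeeping: one must check, case-by-case on $s\in\{1,3,5,7\}$, that the $\eta$ produced from Theorem \ref{inverseline} is matched with a normalised pair $(r,\eta')$ satisfying $\gcd(r,4)=1$ and $\N_{q^8/q^4}(\eta')=-1$, so that Theorem \ref{inverseline} actually lands inside the family in \eqref{ex:CMPZ} rather than producing a genuinely distinct linear set. Everything else is a direct adaptation of the determinantal argument in Theorem \ref{binom6}, made crisper here by the fact that the scattering criterion of \cite{Timpanella_Zini} is a single multiplicative condition.
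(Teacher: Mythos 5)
Your proposal matches the paper's proof essentially step for step: Point \textit{1} via simplicity and Proposition \ref{prop:pointgamma}, collinearity from $\sigma^4$ fixing the line $\langle P^{\sigma},P^{\sigma^5}\rangle$, the cross-ratio computation yielding $\eta^{q^4+1}=\mathrm{N}_{q^8/q^4}(\eta)=-1$ via the Timpanella--Zini criterion, transfer under $\beta$, and Theorem \ref{inverseline} for the converse. The ``bookkeeping'' you defer is in fact immediate, and is how the paper concludes: for odd $s$ one has $5s\equiv s+4\pmod 8$, so $f(x)=\eta x^{q^s}+x^{q^{5s}}$ already lies in the family \eqref{ex:CMPZ} with $\ell=s$ (and $\gcd(s,4)=1$), no case-by-case matching being needed.
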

\begin{proof}
    Let $L$ be equivalent to a maximum scattered linear set of type $L_{1,\eta}^{3,8}$. Since $L$ is simple, by Proposition \ref{prop:pointgamma}, Point {\em 1} holds true. By Theorem \ref{thm:embed}, let $\Gamma_f=\langle \boldsymbol{P}^{\boldsymbol{\sigma}^{i}}, \boldsymbol{Q}: i \in \{2,3,4,6,7\} \rangle$, $f(x)=\eta x^q+ x^{q^5}$, with $\boldsymbol{P}$ an imaginary point of $\PG(7,q^8)$ (wrt $\boldsymbol{\Sigma}$), $\boldsymbol{Q} \in \langle \boldsymbol{P}^{\boldsymbol{\sigma}},\boldsymbol{P}^{\boldsymbol{\sigma}^5} \rangle$, $P=\boldsymbol{P}^{{\beta}}$ and $Q=\boldsymbol{Q}^{\beta}$ where $\beta$ is the collineation of $\PG(7,q^8)$ such that $\boldsymbol{\Sigma}^\beta=\Sigma$ and $\Gamma_f^\beta=\Gamma$. \\As before, all possible projection vertices for such a linear set are equivalent to $\Gamma_f$ under the action of $\PGaL(7,q^8)$. Since $\beta$ preserves collinearity of points and the property for a pair of points to harmonically separate another pair, in order to show Point \textit{2}, it is enough to prove that $\boldsymbol{P}^{\boldsymbol{\sigma}},\boldsymbol{P}^{\boldsymbol{\sigma}^5},\boldsymbol{Q},\boldsymbol{Q}^{\boldsymbol{\sigma}^4}$ are collinear and $(\boldsymbol{P}^{\boldsymbol{\sigma}};\boldsymbol{P}^{\boldsymbol{\sigma}^5};\boldsymbol{Q};\boldsymbol{Q}^{\boldsymbol{\sigma}^4})=-1$. 
   Clearly, $\boldsymbol{\sigma}^4$ fixes the line $\langle \boldsymbol{P}^{\boldsymbol{\sigma}}, \boldsymbol{P}^{\boldsymbol{\sigma}^5} \rangle$ and $\boldsymbol{Q}^{\boldsymbol{\sigma}^{4}} \in \langle \boldsymbol{P}^{\boldsymbol{\sigma}}, \boldsymbol{P}^{\boldsymbol{\sigma}^5} \rangle$. Now, let us suppose that $\boldsymbol{P}=\langle \textbf{v} \rangle_{\F_{q^6}}$ with $\textbf{v}$ a non-zero vector belonging to $\F_{q^8}^8$. If $\boldsymbol{Q}=\langle \textbf{v}^{\boldsymbol{\sigma}}-\eta \textbf{v}^{\boldsymbol{\sigma}^5} \rangle_{\F_{q^8}}$, then $\boldsymbol{Q}^{\boldsymbol{\sigma}^4}=\langle \textbf{v}^{\boldsymbol{\sigma}^5}-\eta^{q^4} \textbf{v}^{\boldsymbol{\sigma}}  \rangle_{\F_{q^8}}$ with $\eta \in \F_{q^8}^*$.  Since we have assumed that either $q\leq 11$ or $q\geq1039891$, by \cite[Theorem 1.1]{Timpanella_Zini} we have $\mathrm{N}_{q^8/q^4}(\eta)=-1$.
Then
$$(\boldsymbol{P}^{\boldsymbol{\sigma}};\boldsymbol{P}^{\boldsymbol{\sigma}^5};\boldsymbol{Q};\boldsymbol{Q}^{\boldsymbol{\sigma}^4})=
  \frac{\begin{vmatrix} 1 & 1 \\ 0 & -\eta \end{vmatrix} \begin{vmatrix} 0 & -\eta^{q^4} \\ 1 & 1 \end{vmatrix}}{\begin{vmatrix} 1 & -\eta^{q^4} \\ 0 & 1 \end{vmatrix} \begin{vmatrix} 0 & 1 \\ 1 & -\eta \end{vmatrix}}=\eta^{q^4+1}=-1.$$
and hence Point \textit{2}.\\
Now suppose  Point \textit{1} and \textit{2} hold true. By Theorem \ref{inverseline}, since $L$ is $(1,6)_q$-evasive, $L \cong L_f$ with $f(x)=\eta x^{q^s}+x^{q^{5s}}$, $s \in \{1,3,5,7\}$ and $\eta \in \F_{q^8}^*$. Moreover, the point $P= \langle v \rangle_{\F_{q^8}}$ is an imaginary point (wrt $\Sigma$). Since $Q= \langle v^\sigma- \eta v^{\sigma^5} \rangle_{\F_{q^8}}$ and  $-1=(P^{\sigma};P^{\sigma^{5}};Q;Q^{\sigma^4})=\mathrm{N}_{q^8/q^4}(\eta)$, again by \cite[Theorem 1.1]{Timpanella_Zini}, $L_f$, and hence $L$, is equivalent to a scattered linear set of type $L_{r,\eta}^{3,8}$.
\end{proof}

\subsection{A characterization of $L_{\delta}^{4,6}$}
Let $L_{\delta}^{4,6}$ be the linear set defined as in \eqref{tri11} for $q$ odd  and $\delta^2+\delta=1$. In \cite{Csajbok_Marino_Zullo}, it is showed that for $q \equiv 0, \pm 1 \pmod 5$, this is scattered. Remaining
cases for $q$ odd were treated later in \cite{Marino_Montanucci_Zullo}. 

In the following, exploiting same arguments used by Csajb\'{o}k, Marino and Zullo in \cite[Thereom 5.7]{Csajbok_Marino_Zullo}), we firstly determine conditions under which the maximum scattered linear set $L_{\delta}^{4,6}$ is simple. 

\begin{proposition} 
The linear set $L_{\delta}^{4,6}$, with $q=p^e$ odd and $\delta^2+\delta=1$, is simple if and only if either $q \equiv 0 \pmod 5$ or $p \equiv \pm 2 \pmod 5$ for some prime $p$.
\end{proposition}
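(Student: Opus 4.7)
The plan is to invoke Theorem \ref{thm:Gamma_L_class}, which reduces simplicity of $L = L_\delta^{4,6}$ to the question of whether $U_f$ and $U_{\hat{f}}$ lie on the same $\GaL(2, q^6)$-orbit, where $f(x) = x^q + x^{q^3} + \delta x^{q^5}$ and $\hat{f}(x) = \delta^q x^q + x^{q^3} + x^{q^5}$. I would first record that $\delta \in \F_{p^2}$ is a root of $T^2+T-1$; it lies in $\F_p$ precisely when $p=5$ or $5$ is a quadratic residue modulo $p$ (equivalently $p\equiv\pm 1\pmod 5$), and otherwise its Galois conjugate is $\delta^p = -1-\delta$.

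Next I would translate the $\GaL$-equivalence into the existence of $a, b, c, d \in \F_{q^6}$ with $ad - bc \neq 0$ and a Frobenius $\rho \colon x \mapsto x^{p^k}$ such that, writing $f^\rho(x) = x^q + x^{q^3} + \delta^\rho x^{q^5}$, the polynomial identity
\[
\hat{f}(ax + b f^\rho(x)) = cx + d f^\rho(x)
\]
holds for every $x \in \F_{q^6}$. Expanding and comparing coefficients of $x^{q^i}$ for $i = 0, \ldots, 5$ yields six equations in $a, b, c, d$. If $a \neq 0$, the relations from $x^q$ and $x^{q^3}$ force $a^{q^2-1} = \delta$; since $\delta \in \F_{q^2}^*$, this is equivalent to $\delta^{q^4+q^2+1} = \delta^3 = 1$, and combined with $\delta^3 = 2\delta - 1$ (from the minimal polynomial) it forces $\delta = 1$, contradicting $\delta^2+\delta-1=0$ for odd $q$. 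Hence $a = 0$, then $d = 0$, and $bc \neq 0$.

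The two surviving conditions (from $x^{q^2}$ and $x^{q^4}$), together with $(\delta^\rho)^{q^3} = (\delta^\rho)^{q^5}$ (since $\delta^\rho \in \F_{p^2} \subseteq \F_{q^2}$), have difference forcing $b^{q^3} = b^{q^5}$, whence $b \in \F_{q^2}^*$ and $b^q = b^{q^3} = b^{q^5}$. Substituting back, both equations reduce to the single relation
\[
\delta^q + (\delta^\rho)^{q^3} + 1 = 0.
\]
Each summand lies in $\{\delta, -1-\delta\}$, so the left-hand side vanishes iff either the two are distinct (producing the telescoping sum $\delta + (-1-\delta) + 1 = 0$) or they coincide and $2\delta + 1 = 0$; in view of $\delta^2+\delta-1=0$, the latter happens exactly when $p=5$.

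Finally I would unpack the parities. When $\delta \in \F_p$ and $p \neq 5$, every Frobenius fixes $\delta$, so $\delta^q = (\delta^\rho)^{q^3} = \delta$ and the equation fails; hence $L$ is not simple. When $\delta \notin \F_p$, choosing $\rho$ with $k$ odd yields $\delta^\rho = -1-\delta$, and a parity check on $e$ (where $q = p^e$) shows $\delta^q$ and $(\delta^\rho)^{q^3}$ land in opposite elements of $\{\delta, -1-\delta\}$, so the equation holds and $L$ is simple. When $p=5$ the coefficient vanishes identically and any $b \in \F_{q^2}^*$ works. Putting the three cases together yields the stated characterization. I expect the main obstacle to be the precise coefficient bookkeeping (keeping Frobenius exponents tracked modulo $n = 6$) and the parity casework on $e$ and $k$ that isolates the single equation $\delta^q + (\delta^\rho)^{q^3} + 1 = 0$.
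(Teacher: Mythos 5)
Your argument is correct and reaches exactly the stated criterion, but it takes a more self-contained route than the paper. The paper's proof is essentially a citation: it observes that all cases except $q=p^{2h+1}$ with $p\equiv\pm2\pmod 5$ are already settled by \cite[Theorem 5.7]{Csajbok_Marino_Zullo}, and handles the missing case by noting that there $\delta$ and $\delta^{p}$ are the two distinct roots of $X^{2}+X-1$, so the same computation as in that theorem gives the $\GaL(2,q^{6})$-equivalence of $U_f$ and $U_{\hat f}$. You instead redo the whole coefficient comparison from scratch: after reducing simplicity to the orbit question via Theorem \ref{thm:Gamma_L_class}, you rule out $a\neq0$ (via $\delta^{3}\neq1$), force $b\in\F_{q^2}^{*}$, and collapse everything to the single relation $\delta^{q}+(\delta^{\rho})^{q^{3}}+1=0$, whose solvability is exactly ``$\delta\notin\F_p$ or $p=5$'', i.e.\ $p\equiv\pm2\pmod 5$ or $q\equiv0\pmod5$; this treats all residues of $p$ uniformly and makes the proposition independent of the external reference, at the cost of the bookkeeping (which I checked and is right). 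Three small points you should make explicit to be airtight: (i) the reduction ``simple iff $U_f\sim_{\GaL}U_{\hat f}$'' also uses that $L^{4,6}_{\delta}$ is not of pseudoregulus type (known from \cite{Csajbok_Marino_Zullo}); this is not cosmetic, since for $n=6$ a pseudoregulus-type linear set has $\GaL$-class $\phi(6)/2=1$ and would be simple for trivial reasons; (ii) it uses the standard fact $L_{\hat f}=L_f$, so that $U_{\hat f}$ genuinely is a second defining subspace when no equivalence exists; (iii) in the sufficiency direction you must check invertibility of the resulting matrix with $a=d=0$: the coefficient of $x$ gives $c=b^{q}\bigl(\delta^{q}(\delta^{\rho})^{q}+2\bigr)=b^{q}\neq0$, because $\delta\,\delta^{\rho}=-1$ when $\delta^{\rho}=-1-\delta$ (and also when $p=5$). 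With these remarks added, your computation is a complete proof and, in effect, reproduces explicitly the argument of \cite[Theorem 5.7]{Csajbok_Marino_Zullo} that the paper merely invokes.
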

\begin{proof}
     To this aim it is enough to observe that the missing case in \cite[Thereom 5.7]{Csajbok_Marino_Zullo}) is when $q = p^{2h+1}$ and $p \equiv \pm 2 \pmod 5$. In this case, $\delta \in \F_{p^2} \setminus \F_p$. So, $\delta$ and $\delta^p$ are two distinct roots of $X^2 +X-1=0$ and $X^q +X+1 = 0$  and we get that $U_f$ and  $U_{\hat{f}}$ with $f(x)=x^q+x^{q^3}+\delta x^{q^5}$ are $\GaL(2, q^6)$-equivalent.
\end{proof}

Hence, we can prove the following results.
\begin{theorem}
Let $L$ be a $(1,4)_q$-evasive linear set of rank $6$ contained in a line $\Lambda$ of $\PG(5, q^6)$ and  let $\Sigma$ be a canonical subgeometry of $\PG(5,q^6)$, $q$ odd.
Then $L$ is equivalent to a maximum scattered linear set of type $L_{\delta}^{4,6}$ if and only if for each solid $\Gamma$ of $\PG(5,q^6)$ with $\Gamma \cap \Sigma = \Gamma \cap \Lambda = \emptyset$ such that $L=\mathrm{p}_{\Gamma,\Lambda}(\Sigma)$, the following hold:
    \begin{enumerate}
        \item there exist a generator $\sigma$ of the subgroup of $\PGaL(6,q^6)$ fixing $\Sigma$ pointwise and  a point $P=\langle v \rangle_{\F_{q^6}}$, $v$ a non-zero vector of $\F_{q^6}^6$, such that
            \begin{equation}\label{1solid-tri}
            \Gamma=\langle P^{\sigma^2},P^{\sigma^4},Q,R \rangle\end{equation}
        with $Q \in \langle P^{\sigma},P^{\sigma^5} \rangle$ and $R \in \langle P^{\sigma^3},P^{\sigma^5} \rangle$;
        \item the point $C= \langle v^{\sigma}-v^{\sigma^3} \rangle_{\F_{q^6}}$  belongs to $\Gamma$;
    
      \item the points $P^\sigma,P^{\sigma^{5}}, C^{\sigma^4}$ and $Q$ are collinear and the cross-ratio $(P^{\sigma};P^{\sigma^5};C^{\sigma^4};Q)$ is in $\F_{q^2}$; 
       \item  the points $C,P^{\sigma^5}$ and $\langle Q,Q^{\sigma^2}\rangle\cap \langle R, R^{\sigma^2}\rangle$ are collinear.
    \end{enumerate}    

Moreover, the vertices $\Gamma$ such that $L=\mathrm{p}_{\Gamma,\Lambda}(\Sigma)$ belong to the same orbit under the action of $\Aut(\Sigma)$ if and only if $q \equiv 0 \pmod 5$ or $p \equiv \pm 2 \pmod 5$ for some prime $p$.
\end{theorem}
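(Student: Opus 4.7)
The plan is to adapt the argument of Theorem \ref{binom6} to the trinomial $f(x) = x^q + x^{q^3} + \delta x^{q^5}$, whose $q$-support is $I = \{1,3,5\}$ with $m = 5$. Since for $L^{4,6}_\delta$ both $I = \hat I$ and $m = \hat m$, the two shapes \eqref{gammaA} and \eqref{gammahat} produced by Proposition \ref{prop:pointgamma} coincide; applied to $L$ this immediately yields Point (1). The extra algebraic constraint $\delta^2 + \delta = 1$ must then be encoded by two further geometric conditions (Points 3 and 4), which explains why the statement has four points rather than two.

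For the direct implication, I would fix a representative $v$ of $P$ in the canonical embedding. Reading off the defining equations of $\Gamma_f$ from Theorem \ref{Fdirect} with $a_1 = a_3 = 1$, $a_5 = \delta$, one gets the explicit representatives $Q = \langle \delta v^\sigma - v^{\sigma^5}\rangle_{\F_{q^6}}$ and $R = \langle \delta v^{\sigma^3} - v^{\sigma^5}\rangle_{\F_{q^6}}$. Point (2) is then immediate because $Q - R = \delta(v^\sigma - v^{\sigma^3})$ is a scalar multiple of the defining vector of $C$. For Point (3), the identity $\sigma^6 = \mathrm{id}$ gives $C^{\sigma^4} = \langle v^{\sigma^5} - v^\sigma\rangle_{\F_{q^6}}$, so $C^{\sigma^4}$ lies on $\langle P^\sigma, P^{\sigma^5}\rangle$, and a $2\times 2$ determinant computation in the basis $(v^\sigma, v^{\sigma^5})$ yields
\[
(P^\sigma; P^{\sigma^5}; C^{\sigma^4}; Q) = \delta \in \F_{q^2},
\]
where the last membership follows from $\delta^2 + \delta - 1 = 0$. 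For Point (4), one intersects $\langle Q, Q^{\sigma^2}\rangle$ and $\langle R, R^{\sigma^2}\rangle$ inside the three-dimensional space $\langle v^\sigma, v^{\sigma^3}, v^{\sigma^5}\rangle_{\F_{q^6}}$ and checks collinearity of the resulting point with $C$ and $P^{\sigma^5}$; using $\delta^{q^2} = \delta$, the condition reduces to $\delta^3 - 2\delta + 1 = 0$, which holds since $\delta^3 = 2\delta - 1$ by $\delta^2 + \delta = 1$. All four conditions are then transported to an arbitrary admissible $\Gamma$ via the collineation $\beta$ with $\boldsymbol{\Sigma}^\beta = \Sigma$ and $\Gamma_f^\beta = \Gamma$, using that collinearity and cross-ratios are $\PGaL$-invariant, the latter up to the companion automorphism $\tau$ of $\beta$, and that $\F_{q^2}$ is stable under $\tau$.

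For the converse, Theorem \ref{inverseline} applied to (1) gives that $P$ is imaginary and $L \cong L_g$ with $g(x) = \sum_{i \in I'} a_i x^{q^{si}}$, $I' \subseteq \{1,3,5\}$, $\gcd(s, 6) = 1$. Imposing (2) forces the defining vector $v^\sigma - v^{\sigma^3}$ of $C$ to satisfy the linear relation cutting out $\Gamma$, which translates into $a_1 = a_3$; after rescaling via $\mathrm{GL}(2, q^6)$ one may assume $g(x) = x^{q^s} + x^{q^{3s}} + \delta' x^{q^{5s}}$. Point (3) then yields $\delta' \in \F_{q^2}$ by the same cross-ratio computation, and Point (4) reduces to $(\delta')^3 - 2\delta' + 1 = 0$, which factors as $(\delta' - 1)\bigl((\delta')^2 + \delta' - 1\bigr)$; the spurious root $\delta' = 1$ is excluded because the corresponding $g$ would not give a $(1,4)_q$-evasive linear set. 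Hence $(\delta')^2 + \delta' = 1$ and $L \cong L^{4,6}_{\delta'}$, the two admissible values of $s \in \{1, 5\}$ being absorbed by the symmetry $L_g \cong L_{\hat g}$.

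I expect the main obstacle to be the coordinate bookkeeping in Point (4), namely showing that the polynomial identity produced by the triple collinearity condition reduces cleanly to $\delta^2 + \delta = 1$ with no spurious roots compatible with the $\F_{q^2}$-membership from Point (3) once $\delta' = 1$ is ruled out. The \emph{moreover} clause then follows by combining Theorem \ref{thm:move-gamma} with the preceding proposition: when $L^{4,6}_\delta$ is simple (i.e., $q \equiv 0 \pmod 5$ or $p \equiv \pm 2 \pmod 5$), $\Aut(\Sigma)$ acts transitively on admissible vertices, while otherwise $c_{\GaL}(L^{4,6}_\delta) = 2$ and Theorem \ref{thm:embed} produces exactly two $\Aut(\Sigma)$-orbits of vertices, corresponding to the Galois-conjugate roots $\delta$ and $\delta^p$ of $X^2 + X - 1$.
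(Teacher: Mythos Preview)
There are two genuine gaps.

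\textbf{Forward direction.} You only verify Points 2--4 for the vertex $\Gamma_f$ and then transport via a collineation $\beta$ with $\Gamma_f^\beta = \Gamma$. But Theorem~\ref{thm:move-gamma} only guarantees that $\Gamma = \Gamma_f^\beta$ \emph{or} $\Gamma = \Gamma_{\hat f}^\beta$, and when $L^{4,6}_\delta$ is not simple (i.e.\ $p \equiv \pm 1 \pmod 5$) these lie in distinct $\Aut(\Sigma)$-orbits. The equality $I = \hat I$, $m = \hat m$ only tells you both cases yield the \emph{structural form} of Point~1; it does not verify Points~2--4 for $\Gamma_{\hat f}$. The paper handles the $\Gamma_{\hat f}$ case separately: since $\hat f(x) = \delta^q x^q + x^{q^3} + x^{q^5}$, one introduces an auxiliary point $S \in \langle P^{\sigma}, P^{\sigma^3}\rangle$, rewrites $\Gamma_{\hat f} = \langle P^{\sigma^2}, P^{\sigma^4}, Q_2, S\rangle$, and then re-verifies Points~2--4 after replacing $\sigma$ by $\sigma^5$ and relabelling $(Q,R,C)$ as $(Q_2,S,R_2)$, obtaining cross-ratio $\delta^{-q}\in\F_{q^2}$.

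\textbf{Converse direction.} Both your expected computation for Point~4 and your exclusion argument are incorrect. Once $\delta' \in \F_{q^2}$ is known from Point~3, the intersection $\langle Q,Q^{\sigma^2}\rangle \cap \langle R,R^{\sigma^2}\rangle$ is the point $\langle v^\sigma - (\delta'^2+\delta')v^{\sigma^3} + v^{\sigma^5}\rangle$, and the $3\times 3$ collinearity determinant with $C$ and $P^{\sigma^5}$ gives $-\delta'^2-\delta'+1 = 0$ directly---a quadratic, with no spurious root $\delta'=1$. More seriously, your proposed exclusion of $\delta'=1$ via $(1,4)_q$-evasiveness fails: for $g(x) = x^{q^s}+x^{q^{3s}}+x^{q^{5s}}$ one has $\ker g = \ker\Tr_{q^6/q^2}$, so the point $\langle(1,0)\rangle$ has weight exactly $4$ and every other point has weight at most $2$; hence $L_g$ \emph{is} $(1,4)_q$-evasive. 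If your cubic were really the outcome of Point~4, the converse implication would be false.
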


\begin{proof}
    Let us suppose that $L$ is equivalent to a maximum scattered linear set of type $L_{\delta}^{4,6}$, $\delta^2+\delta=1$.  Since $\Lambda_f= \Lambda_{\hat{f}}$ where $f(x)=x^q+x^{q^3}+\delta x^{q^5}$, by Proposition  \ref{prop:pointgamma}, there exists a collineation $\beta$ such that $\boldsymbol{\Sigma}^\beta=\Sigma$ and either $\Gamma_f^\beta=\Gamma$ or $\Gamma_{\hat{f}}^\beta=\Gamma$, an imaginary point $\boldsymbol{P}=\langle \textbf{v} \rangle_{\F_q^6}$ such that 
    \begin{equation*}
        \Gamma_f=\langle \boldsymbol{P}^{\boldsymbol{\sigma^2}},\boldsymbol{P}^{\boldsymbol{\sigma}^4}, \boldsymbol{Q}_1,\boldsymbol{R}_1 \rangle
    \end{equation*}
where
        $\boldsymbol{Q}_1=\langle \textbf{v}^{\boldsymbol{\sigma}^5}- \delta \textbf{v}^{\boldsymbol{\sigma}} \rangle_{\F_{q^6}}$ and $\boldsymbol{R}_1= \langle \textbf{v}^{\boldsymbol{\sigma}^5}- \delta \textbf{v}^{\boldsymbol{\sigma}^3}  \rangle_{\F_{q^6}}$
and 
    \begin{equation*}
        \Gamma_{\hat{f}}=\langle \boldsymbol{P}^{\boldsymbol{\sigma^2}},\boldsymbol{P}^{\boldsymbol{\sigma}^4}, \boldsymbol{Q}_2,\boldsymbol{R}_2 \rangle
    \end{equation*}
where
        $\boldsymbol{Q}_2=\langle \delta^q\textbf{v}^{\boldsymbol{\sigma}^5}-\textbf{v}^{\boldsymbol{\sigma}} \rangle_{\F_{q^6}}$ and $\boldsymbol{R}_2= \langle \textbf{v}^{\boldsymbol{\sigma}^5}-\textbf{v}^{\boldsymbol{\sigma}^3}  \rangle_{\F_{q^6}}$.
    Let us consider the point  $\boldsymbol{C}_1= \langle \textbf{v}^{\boldsymbol{\sigma}}- \textbf{v}^{\boldsymbol{\sigma}^3} \rangle_{\F_{q^6}}$. Clearly $\boldsymbol{C}_1 \in \Gamma_f$ and it is easy to see that  the point $\boldsymbol{P}^{\boldsymbol{\sigma}},\boldsymbol{P}^{\boldsymbol{\sigma}^5},\boldsymbol{C}_1^{\boldsymbol{\sigma}^4}$ and $\boldsymbol{Q}_1$ are collinear. The cross-ratio $(\boldsymbol{P}^{\boldsymbol{\sigma}};\boldsymbol{P}^{\boldsymbol{\boldsymbol{\sigma}}^5};\boldsymbol{C}_1^{\boldsymbol{\boldsymbol{\sigma}}^4};\boldsymbol{Q}_1)=\delta \in \F_{q^2}$ and since $\delta^2+\delta=1$, elementary argument also shows that point 4 holds true for the points $\boldsymbol{C}_1,\boldsymbol{P}^{\boldsymbol{\sigma}^5}$ and $\langle \boldsymbol{Q}_1,\boldsymbol{Q}^{\boldsymbol{\sigma}^2}_1\rangle\cap \langle \boldsymbol{R}_1, \boldsymbol{R}_1^{\boldsymbol{\sigma}^2}\rangle$ are collinear. Then, if $\Gamma_f^\beta=\Gamma$, putting $\sigma=\boldsymbol{\sigma}^\beta$, $\boldsymbol{P}^\beta=P$, $\boldsymbol{Q}_1^\beta=Q,\boldsymbol{R}_1^\beta=R$ and $\boldsymbol{C}_1^\beta=C$, Point (\textit{1-4}) hold true for $\Gamma$ as well.\\
If $\Gamma^\beta_{\hat{f}}=\Gamma$, let us consider the point $\boldsymbol{S}=\langle \delta^q\textbf{v}^{\boldsymbol{\sigma}^3}-\textbf{v}^{\boldsymbol{\sigma}}\rangle_{\F_{q^6}} \in \Gamma_{\hat{f}} \cap \langle \boldsymbol{P}^{\boldsymbol{\sigma}}, \boldsymbol{P}^{\boldsymbol{\sigma}^3} \rangle_{\F_{q^6}}$. Clearly, the solid $      \Gamma_{\hat{f}}=\langle \boldsymbol{P}^{\boldsymbol{\sigma^2}},\boldsymbol{P}^{\boldsymbol{\sigma}^4}, \boldsymbol{Q}_2,\boldsymbol{S} \rangle$. Then, it is easy to see that  the point $\boldsymbol{P}^{\boldsymbol{\sigma}},\boldsymbol{P}^{\boldsymbol{\sigma}^5},\boldsymbol{R}_2^{\boldsymbol{\sigma}^2}$ and $\boldsymbol{Q}_2$ are collinear and their cross-ratio is equal to $\delta^{-q}$ which belongs to $\F_{q^2}$. Since $\delta^2+\delta=1$, elementary argument shows that the points $\boldsymbol{R}_2,\boldsymbol{P}^{\boldsymbol{\sigma}}$ and $\langle \boldsymbol{Q}_2,\boldsymbol{Q}_2^{\boldsymbol{\sigma}^4}\rangle \cap \langle \boldsymbol{S}, \boldsymbol{S}^{\boldsymbol{\sigma}^4}\rangle$ are collinear. 

Then, if $\Gamma_{\hat{f}}^\beta=\Gamma$, putting $\sigma=\boldsymbol{\sigma}^{5\beta}$, $\boldsymbol{P}^\beta=P$, $\boldsymbol{Q}^\beta_2=Q,\boldsymbol{S}^\beta=R$ and $\boldsymbol{R}_2^\beta=C$, Point (\textit{1-4}) hold for $\Gamma$.\\
Now we suppose that Point $(\textit{1-4})$ hold true then by Theorem \ref{inverseline}, $L \cong L_f$ with $f(x)=x^{q^s}+ \gamma x^{q^ {3s}}+\delta x^{q^{5s}} \in \F_{q^6}[x]$ with $s \in \{1,5\}$ and $P= \langle v \rangle_{\F_{q^6}}$ is an imaginary point. 
 Then $Q=\langle v^{\sigma^5}-\delta  v^{\sigma}\rangle_{\F_{q^6}}$ and $R= \langle \gamma v^{\sigma^5}-\delta v^{\sigma^3}\rangle_{\F_{q^6}}$. 
   Since $C=\langle v^{\sigma}-v^{\sigma^3} \rangle_{\F_{q^6}}\in \Gamma$, then $\gamma=1$. By Point \textit{3}, it is easy to see that the cross-ratio  $(P^{\sigma};P^{\sigma^5};C^{\sigma^4};Q)=\delta$, and hence $\delta \in \F_{q^2}$.  Moreover, by Point $\textit{4}$, the lines $\langle Q, Q^{\sigma^2} \rangle$ and $\langle R, R^{\sigma^2} \rangle$ meet at the point $\langle v^{\sigma} -(\delta^2+\delta)v^{\sigma^3}+v^{\sigma^5} \rangle_{\F_{q^6}}$ and the collinearity of this point, $C$ and $P^{\sigma^5}$ implies
\begin{equation}
0=
\begin{vmatrix}
0 & 0 & 1 \\
1 & -1 & 0 \\
1 & -(\delta^2+\delta) & 1
\end{vmatrix}
=-\delta^2-\delta+1,
\end{equation}
obtaining $L \cong L_{\delta}^{4,6}$.
\end{proof}

The family of linear sets of type $L^{4,6}_\delta$ was investigated for also for $q$ even in \cite{BartLongMarTimp}. The conditions assuring the property of being scattered in this case are rather demanding, and it seems quite difficult to translate them in geometric terms, see \cite[Theorem 2.9]{BartLongMarTimp}. However, by using arguments similar to those used in the theorem above, we can state the following result.

\begin{theorem}
Let $L$ be a $(1,4)_q$-evasive linear set of rank $6$ contained in a line $\Lambda$ of $\PG(5, q^6)$, $q$ even and  let $\Sigma$ be a canonical subgeometry of $\PG(5,q^6)$, $q$ even.
If $L$ is equivalent to a maximum scattered linear set of type $L_{\delta}^{4,6}$, then for each solid $\Gamma$ of $\PG(5,q^6)$ with $\Gamma \cap \Sigma = \Gamma \cap \Lambda = \emptyset$ such that $L=\mathrm{p}_{\Gamma,\Lambda}(\Sigma)$, the following holds:
    \begin{enumerate}
        \item there exist a generator $\sigma$ of the subgroup of $\PGaL(6,q^6)$ fixing $\Sigma$ pointwise and  a point $P=\langle v \rangle_{\F_{q^6}}$, $v$ a nonzero vector of $\F_{q^6}^6$, such that the solid
            \begin{equation}\label{solid-tri}
            \Gamma=\langle P^{\sigma^2},P^{\sigma^4},Q,R \rangle\end{equation}
        with $Q \in \langle P^{\sigma},P^{\sigma^5} \rangle$ and $R \in \langle P^{\sigma^3},P^{\sigma^5} \rangle$;
        \item the point $C= \langle v^{\sigma}-v^{\sigma^3} \rangle_{\F_{q^6}}$ belongs to $\Gamma$;
    
      \item the point $P^\sigma,P^{\sigma^{5}}, C^{\sigma^4}$ and $Q$ are collinear and the cross-ratio $(P^{\sigma};P^{\sigma^5};C^{\sigma^4};Q)$ does not belong to any subfield of $\F_{q{^6}}$.
    \end{enumerate}    
\end{theorem}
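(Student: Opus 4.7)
The plan is to follow the template of the preceding theorem (the $q$ odd case) verbatim, except that only the forward implication is asserted and Point 4 is dropped, so the argument is essentially a simpler cousin. Fix the polynomial $f(x)=x^q+x^{q^3}+\delta x^{q^5}$. Since $\Lambda_f=\Lambda_{\hat f}$ and $L$ is $(1,4)_q$-evasive of rank $6$, Proposition \ref{prop:pointgamma} supplies a collineation $\beta$ of $\PG(5,q^6)$ with $\boldsymbol{\Sigma}^\beta=\Sigma$ and either $\Gamma_f^\beta=\Gamma$ or $\Gamma_{\hat f}^\beta=\Gamma$, together with an imaginary point $\boldsymbol{P}=\langle\textbf{v}\rangle_{\F_{q^6}}$ (with respect to $\boldsymbol{\Sigma}$) such that
\[
\Gamma_f=\langle \boldsymbol{P}^{\boldsymbol{\sigma}^2},\boldsymbol{P}^{\boldsymbol{\sigma}^4},\boldsymbol{Q}_1,\boldsymbol{R}_1\rangle,\qquad \boldsymbol{Q}_1=\langle \textbf{v}^{\boldsymbol{\sigma}^5}-\delta\textbf{v}^{\boldsymbol{\sigma}}\rangle_{\F_{q^6}},\quad \boldsymbol{R}_1=\langle \textbf{v}^{\boldsymbol{\sigma}^5}-\delta\textbf{v}^{\boldsymbol{\sigma}^3}\rangle_{\F_{q^6}},
\]
and analogously for $\Gamma_{\hat f}$ (obtained from $\Gamma_f$ by swapping the roles played by $\boldsymbol{\sigma}$ and $\boldsymbol{\sigma}^5$, as in the odd-case proof).

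Next I would verify Points $(1)$--$(3)$ in the canonical model, i.e., with $\boldsymbol{\sigma}$ and $\Gamma_f$ in place of $\sigma$ and $\Gamma$. Point $(1)$ is immediate from the description of $\Gamma_f$. For Point $(2)$, setting $\boldsymbol{C}_1=\langle \textbf{v}^{\boldsymbol{\sigma}}-\textbf{v}^{\boldsymbol{\sigma}^3}\rangle_{\F_{q^6}}$, one observes that $\boldsymbol{C}_1$ lies on the line $\langle \boldsymbol{Q}_1,\boldsymbol{R}_1\rangle\subset\Gamma_f$ because $\delta\,(\textbf{v}^{\boldsymbol{\sigma}}-\textbf{v}^{\boldsymbol{\sigma}^3})=(\textbf{v}^{\boldsymbol{\sigma}^5}-\delta\textbf{v}^{\boldsymbol{\sigma}^3})-(\textbf{v}^{\boldsymbol{\sigma}^5}-\delta\textbf{v}^{\boldsymbol{\sigma}})$. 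For Point $(3)$, since $\boldsymbol{\sigma}^4$ stabilises the line $\langle\boldsymbol{P}^{\boldsymbol{\sigma}},\boldsymbol{P}^{\boldsymbol{\sigma}^5}\rangle$, the point $\boldsymbol{C}_1^{\boldsymbol{\sigma}^4}$ belongs to it, as does $\boldsymbol{Q}_1$; the usual $2\times 2$ determinant computation (identical to the one carried out in the odd-case proof) then yields
\[
(\boldsymbol{P}^{\boldsymbol{\sigma}};\boldsymbol{P}^{\boldsymbol{\sigma}^5};\boldsymbol{C}_1^{\boldsymbol{\sigma}^4};\boldsymbol{Q}_1)=\delta.
\]

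Transporting through $\beta$, I set $\sigma=\beta^{-1}\boldsymbol{\sigma}\beta$, $P=\boldsymbol{P}^\beta$, $Q=\boldsymbol{Q}_1^\beta$, $R=\boldsymbol{R}_1^\beta$, $C=\boldsymbol{C}_1^\beta$ when $\Gamma_f^\beta=\Gamma$, and $\sigma=\beta^{-1}\boldsymbol{\sigma}^5\beta$ with the analogously relabelled auxiliary points when $\Gamma_{\hat f}^\beta=\Gamma$ (precisely as done in the odd case, and exploiting Remark \ref{aggiunto}). Since $\beta$ preserves incidence, Points $(1)$ and $(2)$ descend immediately; for Point $(3)$, if $\tau\in\mathrm{Aut}(\F_{q^6})$ denotes the companion automorphism of $\beta$, then $(P^\sigma;P^{\sigma^5};C^{\sigma^4};Q)=\delta^\tau\in\F_{q^6}$.

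The main obstacle is the final clause of Point $(3)$, namely that $\delta^\tau$ belongs to no proper subfield of $\F_{q^6}$. This is where the characteristic-two hypothesis enters: I would invoke the conditions on $\delta$ in \cite[Theorem 2.9]{BartLongMarTimp} ensuring that $L_\delta^{4,6}$ is scattered, which force $\delta\notin \F_{q^2}\cup\F_{q^3}$ (equivalently, $\delta$ generates $\F_{q^6}$ over $\F_q$). Since $\mathrm{Aut}(\F_{q^6})$ permutes the subfield lattice and fixes $\F_q$ setwise, the same property is satisfied by $\delta^\tau$, which completes the argument. The only delicate point is the faithful extraction of this algebraic condition from \cite[Theorem 2.9]{BartLongMarTimp}; this is what prevents a characterisation (the "if and only if" form) in even characteristic, as the authors explicitly acknowledge just before the statement.
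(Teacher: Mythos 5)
Your proposal is correct and takes essentially the route the paper intends: the paper gives no written proof of this statement (only the remark that it follows ``by arguments similar'' to the odd-characteristic case), and your argument reproduces that template faithfully — Proposition~\ref{prop:pointgamma} via $\Lambda_f=\Lambda_{\hat f}$, the explicit descriptions of $\Gamma_f$ and $\Gamma_{\hat f}$, the membership $C\in\langle Q,R\rangle\subset\Gamma$, the cross-ratio computation giving $\delta$ up to the companion automorphism of $\beta$, and the relabelling $\sigma=\beta^{-1}\boldsymbol{\sigma}^5\beta$ with Remark~\ref{aggiunto} in the $\Gamma_{\hat f}$ branch. Your treatment of the final clause of Point~3 — deferring ``$\delta$ lies in no proper subfield of $\F_{q^6}$'' to the scatteredness conditions of \cite[Theorem 2.9]{BartLongMarTimp} and observing that field automorphisms preserve the subfield lattice — is exactly what the authors implicitly rely on, the only caveat (which you yourself flag) being that one needs this as a \emph{necessary} condition for scatteredness in even characteristic, not merely part of the sufficient hypotheses of that theorem.
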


\subsection{A characterization of $L_{s,m,h}^{5,t}$}

This section is devoted to characterize via geometrical properties of their projection vertex the scattered linear set of type $L_{s,m,h}^{5,t}$ as in \eqref{quadrinomial} with 

\begin{equation*}
\psi_{t,s,m,h}(x)=x^{q^{s(t-1)}}+h^{1-q^{s(2t-1)}}x^{q^{s(2t-1)}}+m \left (x^{q^s}-h^{1-q^{s(t+1)}}x^{q^{s(t+1)}}\right ),
\end{equation*}
where $t\geq3$ and $(m,h)  \in \F_{q^{t}}^* \times \F_{q^{2t}}^*$. We will proceed by splitting our discussion in two cases according to suitable conditions for the parameters $m$ and $h$ are satisfied, precisely:
\begin{enumerate}
    \item [$(i)$] $m=1$ and $h \in \F_{q^{2t}}$ with $\mathrm{N}_{q^{2t}/q^t}(h)=-1$
    \item [$(ii)$] $m$ neither a $(q-1)$-th power nor a $(q+1)$-th power of an element belonging to $\ker \mathrm{Tr}_{q^{2t}/q^t}$ and $q \equiv 1 \pmod 4$ if $t$ is odd.
    \end{enumerate} 
Actually, for $q \leq 5$ odd and $t\in\{3,4,5,6\}$, \texttt{Magma} computation suggests a wider choice of parameters for which the linear set $L_{s,m,h}^{5,t}$ is scattered (see \cite{GGLT}).

\subsubsection*{The linear sets of the type $L_{s,m,h}^{5,t}$, $m=1$ }
In order to investigate the geometrical properties of the projection vertex of a linear set of type $L_{s,h}^{5,t}:=L_{s,1,h}^{5,t}$ in $\PG(1,q^{2t})$, we first observe that, by Theorem \ref{thm:embed}, \cite[Proposition 3.1]{longobardi_zanella} and \cite[Proposition 4.17]{neri_santonastaso_zullo}, for any $t \geq 5$  $\psi_{t,h,s}(x):=\psi_{t,s,1,h}(x)$ is $\GaL(2t,q^{2t})$-equivalent to its adjoint. For $t=3$, using the same argument as done in \cite[Theorem 4.7]{longobardi_marino_trombetti_zhou}, the simplicity of $L_{s,h}^{5,t}$ follows. Therefore,  we get the following.
\begin{theorem}\label{quadsimple}
   The linear set $L_{s,h}^{5,t}$  of $\PG(1,q^{2t})$ is simple, for $t =3$ or $t \geq 5$.
\end{theorem}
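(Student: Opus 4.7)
The plan is to reduce the simplicity claim to a question about the polynomial $\psi_{t,h,s}(x)$ versus its adjoint, and then invoke the relevant equivalence lemmas already in the literature. Concretely, since $\psi_{t,h,s}$ is a genuine quadrinomial (not a monomial), the linear set $L_{s,h}^{5,t}$ is not of pseudoregulus type. Hence by Theorem \ref{thm:Gamma_L_class}, any $n$-dimensional $\F_q$-subspace $W$ of $\F_{q^{2t}}^2$ defining $L_{s,h}^{5,t}$ is of the form $\lambda U_{\psi_{t,h,s}}$ or $\lambda U_{\hat{\psi}_{t,h,s}}$, whence $c_{\GaL}(L_{s,h}^{5,t}) \leq 2$, with equality precisely when $U_{\psi_{t,h,s}}$ and $U_{\hat{\psi}_{t,h,s}}$ are not $\GaL(2,q^{2t})$-equivalent. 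So the whole task collapses to producing a single $\GaL(2,q^{2t})$-equivalence between these two $\F_q$-subspaces.

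Next I would split on the value of $t$. For $t\ge 5$ the needed equivalence is recorded in the literature: if $h\in\F_{q^t}^{*}$, \cite[Proposition 3.1]{longobardi_zanella} gives $U_{\psi_{t,h,s}} \sim_{\GaL} U_{\hat{\psi}_{t,h,s}}$ directly, and if $h\in\F_{q^{2t}}\setminus\F_{q^t}$, \cite[Proposition 4.17]{neri_santonastaso_zullo} does the same. Plugging these citations into the reduction above immediately yields $c_{\GaL}(L_{s,h}^{5,t})=1$.

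For $t=3$ such a general equivalence statement is not in the literature, so I would follow the \textbf{explicit} strategy of \cite[Theorem 4.7]{longobardi_marino_trombetti_zhou}: compute $\hat{\psi}_{3,h,s}$ coefficient by coefficient (using $\gcd(s,6)=1$ and $\mathrm{N}_{q^6/q^3}(h)=-1$), then propose a substitution of the form $x\mapsto \mu x^{q^{is}}$ together with a scalar rescaling on the second component, chosen so that the image of $U_{\hat{\psi}_{3,h,s}}$ is $U_{\psi_{3,h,s}}$. Verification reduces to a small system of identities on $\mu$ and $h$, readily solved using the norm condition on $h$; this handles simultaneously the two subfamilies $h\in\F_{q^3}^{*}$ and $h\in\F_{q^6}\setminus\F_{q^3}$.

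The real obstacle is the $t=3$ case: there, dimension coincidences preclude the elegant reductions available in higher $t$, and one must rely on the ad hoc linear-algebra computation above. (The exclusion of $t=4$ from the statement is consistent with this picture, since in that dimension no analogous adjoint-equivalence is available and the $\GaL$-class may genuinely exceed one.) Once both cases are settled, combining them with the reduction via Theorem \ref{thm:Gamma_L_class} gives $c_{\GaL}(L_{s,h}^{5,t})=1$, that is, $L_{s,h}^{5,t}$ is simple.
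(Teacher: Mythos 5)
Your proposal follows essentially the same route as the paper: it reduces simplicity to the $\GaL$-equivalence of $U_{\psi_{t,h,s}}$ with its adjoint via Theorem \ref{thm:Gamma_L_class}, invokes \cite[Proposition 3.1]{longobardi_zanella} and \cite[Proposition 4.17]{neri_santonastaso_zullo} for $t\geq 5$, and repeats the explicit computation of \cite[Theorem 4.7]{longobardi_marino_trombetti_zhou} for $t=3$, which are exactly the ingredients the paper uses. The only caution is that ``$\psi_{t,h,s}$ is a quadrinomial, hence $L^{5,t}_{s,h}$ is not of pseudoregulus type'' is not by itself a valid inference (being non-monomial does not a priori exclude pseudoregulus type); this fact should instead be supported by the known non-equivalence results for this family in the cited literature, as the paper implicitly does.
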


Now, as we did for the previous examples, we prove the following result.
\begin{theorem}
    Let $L$ be an $(1,2(t-1))$-evasive linear set of rank $2t$ in a line $\Lambda$ of $\PG(2t-1, q^{2t})$. Let $\Sigma$ be a canonical subgeometry of $\PG(2t-1, q^{2t})$, $q$ odd and $t \geq 3$. Then $L$ is equivalent to a scattered linear set of the type $L_{s,h}^{5,t}$ if and only if for each $(2t-3)$ dimensional subspace $\Gamma$ of $\PG(2t-1, q^{2t})$ such that $L = \mathrm{p}_{\Gamma ,\Lambda}(\Sigma)$, the following holds:
\begin{enumerate}
\item [\textit{1.}] there exist a generator $\sigma$ of the subgroup of $\PGaL(2t, q^{2t})$ fixing $\Sigma$ pointwise and a  point $P=\langle v \rangle_{\F_{q^{2t}}} \in \PG(2t-1,q^{2t})$ such that
\begin{equation*}
    \Gamma = \langle P^{\sigma^i},Q, R, S : i \notin \{0,1,t-1,t+1,2t-1\} \rangle
\end{equation*} 
with $Q \in \langle P^{\sigma},P^{\sigma^{2t-1}} \rangle$, $R \in \langle P^{\sigma^{t-1}},P^{\sigma^{2t-1}} \rangle$ and $S \in \langle P^{\sigma^{t+1}}, P^{\sigma^{2t-1}} \rangle$,
\item [\textit{2.}]  the point $X= \langle v^{\sigma}-v^{\sigma^{t-1}} \rangle_{\F_{q^{2t}}}$ belongs to $\Gamma$, 
\item [\textit{3.}] there exists $h \in \mathcal{H}=\{z \in \F_{q^{2t}} \, \colon \, \mathrm{N}_{q^{2t}/q^t}(z)=-1\}$ such that
\begin{enumerate}

   \item  the points $P^{\sigma},P^{\sigma^{2t-1}},D,Q$ where $D=\langle v^\sigma + v^{\sigma^{2t-1}} \rangle_{\F_{q^{2t}}}$ are collinear  and the cross-ratio  $(P^{\sigma};P^{\sigma^{2t-1}};D;Q)=-h^{1-q^{s(2t-1)}}$,
   \item the points $P^{\sigma},P^{\sigma^{t+1}},R^{\sigma^2},Y$, where $Y=\langle Q,S\rangle \cap \langle P^{\sigma}, P^{\sigma^{t+1}}\rangle$, are collinear and the cross-ratio $(P^{\sigma};P^{\sigma^{t+1}};R^{\sigma^2};Y)=h^{1+q^{2s}}$.
            \end{enumerate}
\end{enumerate}
\end{theorem}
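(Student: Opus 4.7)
The plan is to argue by double implication, following the template of the earlier characterizations of $L_{1,\eta}^{3,6}$ (Theorem \ref{binom6}) and $L_\delta^{4,6}$ in the paper.

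\textbf{Forward direction.} Suppose $L \cong L_{s,h}^{5,t}$. By Theorem \ref{quadsimple}, $L$ is simple, so Proposition \ref{prop:pointgamma} produces, for every admissible vertex $\Gamma$, a generator $\sigma$ of the subgroup of $\PGaL(2t,q^{2t})$ fixing $\Sigma$ pointwise, an imaginary point $P=\langle v\rangle$, and a collineation $\beta$ with $\boldsymbol{\Sigma}^\beta=\Sigma$ and $\Gamma_f^\beta=\Gamma$, where $f=\psi_{t,h,s}$ has $q^s$-support $I=\{1,t-1,t+1,2t-1\}$ and $q^s$-degree $m=2t-1$. Theorem \ref{thm:embed} then gives the explicit shape of $\Gamma_f$ in canonical coordinates, yielding Point 1. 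Point 2 reduces to the equation $a_1=a_{t-1}$ in the defining equation of $\Gamma_f$, which holds since both coefficients equal $1$ in $\psi_{t,h,s}$. For Point 3 I would compute the two cross-ratios directly in suitable frames of the lines $\langle P^{\sigma},P^{\sigma^{2t-1}}\rangle$ and $\langle P^{\sigma},P^{\sigma^{t+1}}\rangle$, using the explicit expressions of $\boldsymbol{Q},\boldsymbol{R},\boldsymbol{S},\boldsymbol{D},\boldsymbol{Y}$. Since $\beta$ preserves cross-ratios up to its companion automorphism and $\mathcal{H}$ is stable under $\Aut(\F_{q^{2t}})$, the conclusion transfers from $\Gamma_f$ to $\Gamma$ with a (possibly adjusted) element of $\mathcal{H}$.

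\textbf{Backward direction.} Assume Points 1-3 for some admissible $\Gamma$. Theorem \ref{inverseline} shows that $P$ is imaginary (with respect to $\Sigma$) and $L\cong L_f$ with $f(x)=\sum_{i\in I}a_ix^{q^{si}}$, $I\subseteq\{1,t-1,t+1,2t-1\}$, $m=2t-1$. Imposing $X\in\Gamma$ on the linear equation of $\Gamma$ forces $a_1=a_{t-1}$, so we normalise $a_1=a_{t-1}=1$. Expressing $Q,R,S,D,R^{\sigma^2},Y$ in the basis $(v^{\sigma^i})_i$ and computing the cross-ratios in Point 3 yields
\[
-\frac{a_{2t-1}}{a_1}=-h^{1-q^{s(2t-1)}}, \qquad \frac{a_{2t-1}^{q^{2s}}\,a_{t+1}}{a_1\,a_{t-1}^{q^{2s}}}=h^{1+q^{2s}}.
\]
The first equation gives $a_{2t-1}=h^{1-q^{s(2t-1)}}$; substituting into the second and simplifying the exponent via the identity $h^{q^{s(t+1)}}=-h^{-q^s}$ (a direct consequence of $\N_{q^{2t}/q^t}(h)=-1$ together with $\gcd(s,2t)=1$, which forces $s$ odd and hence $st\equiv t\pmod{2t}$) delivers $a_{t+1}=-h^{1-q^{s(t+1)}}$. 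Therefore $f=\psi_{t,h,s}$ and $L\cong L_{s,h}^{5,t}$.

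\textbf{Main obstacle.} The delicate step is the exponent bookkeeping for Point 3(b): the identity $-h^{q^{2s}-q^s+1-q^{s(t+1)}}=h^{1+q^{2s}}$ must be derived carefully from $\N_{q^{2t}/q^t}(h)=-1$ and the parity of $s$. Everything else reduces to a routine application of Proposition \ref{prop:pointgamma}, Theorems \ref{thm:embed} and \ref{inverseline}, together with the invariance of cross-ratios under collineations. A minor additional technicality is the case $t=4$, which lies outside Theorem \ref{quadsimple}; here one must admit the vertex $\Gamma_{\hat f}$ as a second possibility via Remark \ref{aggiunto}, but by the symmetry between $\psi_{t,h,s}$ and its adjoint the same cross-ratio relations apply, up to replacing $h$ by a related element of $\mathcal{H}$.
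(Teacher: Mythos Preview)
Your proposal is correct and follows essentially the same route as the paper's proof: Theorem~\ref{thm:move-gamma}/Proposition~\ref{prop:pointgamma} for the forward direction, explicit cross-ratio computations on the lines $\langle P^{\sigma},P^{\sigma^{2t-1}}\rangle$ and $\langle P^{\sigma},P^{\sigma^{t+1}}\rangle$, and Theorem~\ref{inverseline} for the converse, with the exponent identity $h^{1+q^s}=-h^{1-q^{s(t+1)}}$ derived exactly as you indicate via $st\equiv t\pmod{2t}$. The only point where the paper is more explicit is the $t=4$ case: rather than invoking an abstract symmetry, it writes out $\Gamma_{\hat\psi_{4,h,s}}$ concretely, exhibits an alternative spanning set $\boldsymbol{Q},\boldsymbol{Z},\boldsymbol{T}$, and then sets $\sigma=\beta^{-1}\boldsymbol{\rho}^{2t-1}\beta$ (the inverse generator) to recast the configuration in the same form as Points~1--3; your sketch captures this idea but would benefit from that relabeling being made explicit.
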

\begin{proof}

 Let us suppose that $L$ is equivalent to a maximum scattered linear set of type  $L^{5,t}_{s,h}$. By Theorem \ref{thm:move-gamma}, there exists $\beta$ such that $\boldsymbol{\Sigma}^\beta=\Sigma$ and either $\Gamma_{\psi_{t,h,s}}^\beta=\Gamma$  or $\Gamma_{\hat{\psi}_{t,h,s}}^\beta=\Gamma$. If $t=3$ or $t \geq 5$, since $L^{5,t}_{s,h}$ is simple, it is enough to prove the statement supposing that $\Gamma_{\psi_{t,h,s}}^\beta=\Gamma$.\\
Let $\boldsymbol{\rho}=\boldsymbol{\sigma}^s$ and $\boldsymbol{P}=\langle \textbf{v} \rangle_{\F_{q^{2t}}}$ be an imaginary point of $\PG(2t-1,q^{2t})$ (wrt $\boldsymbol{\Sigma}$), $\boldsymbol{Q} \in \langle \boldsymbol{P}^{\boldsymbol{\rho}}, \boldsymbol{P}^{\boldsymbol{\rho}^{2t-1}} \rangle $, $\boldsymbol{R} \in \langle \boldsymbol{P}^{\boldsymbol{\rho}^{t-1}}, \boldsymbol{P}^{\boldsymbol{\rho}^{2t-1}} \rangle$ and $\boldsymbol{S} \in \langle \boldsymbol{P}^{\boldsymbol{\rho}^{t+1}}, \boldsymbol{P}^{\boldsymbol{\rho}^{2t-1}} \rangle$ and
\begin{equation*}
    \Gamma_{\psi_{t,h,s}}=\langle \boldsymbol{P}^{\boldsymbol{\rho}^{i}}, \boldsymbol{Q}, \boldsymbol{R}, \boldsymbol{S}: i \not \in  \{1,t-1,t+1,2t-1\} \rangle
\end{equation*} 
where
\begin{equation*}
\boldsymbol{Q}=\langle \textbf{v}^{\boldsymbol{\rho}^{2t-1}}-h^{1-q^{s(2t-1)}}\textbf{v}^{\boldsymbol{\rho}}\rangle_{\F_{q^{2t}}}, \boldsymbol{R}= \langle \textbf{v}^{\boldsymbol{\rho}^{2t-1}}-h^{1-q^{s(2t-1)}}\textbf{v}^{\boldsymbol{\rho}^{t-1}}\rangle_{\F_{q^{2t}}}, \textnormal{ and }
\end{equation*}
\begin{equation*}
\boldsymbol{S}=\langle h^{1-q^{s(t+1)}}\textbf{v}^{\boldsymbol{\rho}^{2t-1}}+h^{1-q^{s(2t-1)}}\textbf{v}^{\boldsymbol{\rho}^{t+1}}\rangle_{\F_{q^{2t}}}.
\end{equation*}

 It is straightforward to check that the lines $\langle \boldsymbol{Q}, \boldsymbol{R} \rangle$ and $\langle \boldsymbol{P}^{\boldsymbol{\rho}},\boldsymbol{P}^{\boldsymbol{\rho}^{t-1}}\rangle$ meet at the point $\boldsymbol{X}=\langle \textbf{v}^{\boldsymbol{\rho}^{t-1}} -\textbf{v}^{\boldsymbol{\rho}}\rangle_{\F_{q^{2t}}}$ and this belongs to $\Gamma_{\psi_{t,h,s}}$. Also, the lines $\langle \boldsymbol{Q}, \boldsymbol{S} \rangle$ and $\langle \boldsymbol{P}^{\boldsymbol{\rho}}, \boldsymbol{P}^{\boldsymbol{\rho}^{t+1}} \rangle$  meet at the point $\boldsymbol{Y}=\langle h^{1-q^{s(t+1)}}\textbf{v}^{\boldsymbol{\rho}}+\textbf{v}^{\boldsymbol{\rho}^{t+1}}\rangle_{\mathbb{F}_{q^{2t}}}$. Let  $\boldsymbol{D}= \langle \textbf{v}^{\boldsymbol{\rho}} + \textbf{v}^{\boldsymbol{\rho}^{2t-1}} \rangle_{\F_{q^{2t}}}$, then the points $\boldsymbol{P}^{\boldsymbol{\rho}},\boldsymbol{P}^{\boldsymbol{\rho}^{2t-1}},\boldsymbol{D},\boldsymbol{Q}$ and $\boldsymbol{P}^{\boldsymbol{\rho}},\boldsymbol{P}^{\boldsymbol{\rho}^{t+1}},\boldsymbol{R}^{\boldsymbol{\rho}^2},\boldsymbol{Y}$ are collinear. Moreover, 
 \begin{equation*}
(\boldsymbol{P}^{\boldsymbol{\rho}};\boldsymbol{P}^{\boldsymbol{\rho}^{2t-1}};\boldsymbol{D};\boldsymbol{Q})  = \frac{\begin{vmatrix}
        1 & 1 \\
        0 & 1
   \end{vmatrix}
   \begin{vmatrix} 
    0&  1\\
    1 & -h^{1-q^{s(2t-1)}}
   \end{vmatrix}}
   {\begin{vmatrix}
   1 & 1 \\
   0 & -h^{1-q^{s(2t-1)}}
   \end{vmatrix}
   \begin{vmatrix}
       0 & 1 \\
       1 & 1
   \end{vmatrix}}=-h^{1-q^{s(2t-1)}}
 \end{equation*}
and 
 \begin{equation*}
(\boldsymbol{P}^{\boldsymbol{\rho}};\boldsymbol{P}^{\boldsymbol{\rho}^{t+1}};\boldsymbol{R}^{\boldsymbol{\rho}^2};\boldsymbol{Y})=
\frac{ \begin{vmatrix}
1 & 1 \\
0 &  -h^{q^{2s}-q^s}\\
 \end{vmatrix}
 \begin{vmatrix}
0 & h^{1-q^{s(t+1)}} \\
1 & 1
 \end{vmatrix}}
 {\begin{vmatrix}
1 & h^{1-q^{s(t+1)}} \\
0 & 1
 \end{vmatrix}
 \begin{vmatrix}
0 & 1 \\
1 & -h^{q^{2s}-q^s}\\
 \end{vmatrix}}= h^{q^{2s}+1}.
 \end{equation*}
Therefore, since $\beta$ preserve the collinearity and if $(A_1;A_2;A_3;A_4) \in \mathcal{H}$ then $(A^\beta_1;A^\beta_2;A^\beta_3;A^\beta_4)$ belongs to $\mathcal{H}$, the Points \textit{1-3} hold true.\\
For $t=4$, we have  
\begin{equation*}
\Gamma_{\hat{\psi}_{t,h,s}}=\langle \boldsymbol{P}^{\boldsymbol{\rho}^i},\boldsymbol{Q}, \boldsymbol{R}, \boldsymbol{S} : i \not \in \{1,3,5,7\}\rangle
\end{equation*}
where
\begin{equation*}
\boldsymbol{Q}=\langle \textbf{v}^{\boldsymbol{\rho}}-h^{q^s-1}\textbf{v}^{\boldsymbol{\rho}^7}\rangle_{\F_{q^{8}}},\quad \boldsymbol{R}= \langle \textbf{v}^{\boldsymbol{\rho}^{3}}+h^{q^{3s}-1}\textbf{v}^{\boldsymbol{\rho}^{7}}\rangle_{\F_{q^{8}}}, \textnormal{ and }
\end{equation*}
\begin{equation*}
\boldsymbol{S}=\langle \textbf{v}^{\boldsymbol{\rho}^{7}}-\textbf{v}^{\boldsymbol{\rho}^{5}}\rangle_{\F_{q^{8}}}.
\end{equation*}
If $\boldsymbol{\Sigma}^\beta=\Sigma$ and $\Gamma_{\hat{\psi}_{t,h,s}}^\beta=\Gamma$, let $\boldsymbol{D}=\langle \textbf{v}^{\rho} + \textbf{v}^{\boldsymbol{\rho}^{7}} \rangle_{\F_{q^{8}}}$, $\boldsymbol{T}=\langle  h^{q^{3s}-q^s}\textbf{v}^{\boldsymbol{\rho}} -\textbf{v}^{\boldsymbol{\rho}^3} \rangle_{\F_{q^{8}}}$ and $\boldsymbol{Z}=\langle \textbf{v}^{\boldsymbol{\rho}}-h^{1-q^{s}} \textbf{v}^{\boldsymbol{\rho}^5}\rangle_{\F_{q^{8}}}$. The result follows noting that 
\begin{equation*}
\Gamma_{\hat{\psi}_{t,h,s}}= \langle \boldsymbol{P}^{\boldsymbol{\rho}^i},\boldsymbol{Q}, \boldsymbol{Z}, \boldsymbol{T} : i \not \in \{1,3,5,7\}\rangle
\end{equation*}
and putting $\sigma=\beta^{-1}\boldsymbol{\rho}^{7}\beta$, $P=\boldsymbol{P}^\beta$, $Q=\boldsymbol{Q}^\beta$, $R=\boldsymbol{Z}^\beta$, $S=\boldsymbol{T}^\beta$, $X=\boldsymbol{S}^\beta$ and $D=\boldsymbol{D}^\beta$.\\
Now, let  suppose that Points $\textit{1}-\textit{3}$ hold true. By Theorem \ref{inverseline}, there exist a subgeometry $\Sigma$, a collineation $\sigma$ fixing $\Sigma$ pointwise and an imaginary point (wrt $\Sigma$) $P= \langle v \rangle_{\F_{q^{2t}}}$,  such that $L \cong L_f$ with $f(x)=x^{q^s}+\alpha x^{q^{s(t-1)}}+\beta x^{q^{s(t+1)}}+\gamma x^{q^{s(2t-1)}}\in \F_{q^n}[x]$, where $Q=\langle v^{\sigma^{2t-1}}-\gamma v^{\sigma}\rangle_{\F_{q^{2t}}}$, $R= \langle  \alpha v^{\sigma^{2t-1}}-\gamma v^{\sigma^{t-1}}\rangle_{\F_{q^{2t}}}$ and $S=\langle \beta v^{\sigma^{2t-1}}- \gamma v^{\sigma^{t+1}}\rangle_{\F_{q^{2t}}}$. Again, it is straightforward to see that  $X=\langle v^{\sigma^{t-1}} -\alpha v^{\sigma}\rangle_{\F_{q^{2t}}}$ and, hence by \textit{2}, $\alpha=1$.
By Point \textit{3(b)}, saying $D= \langle v^\sigma + v^{\sigma^{2t-1}} \rangle_{\F_{q^{\textcolor{blue}{2}t}}}$, we get $\gamma=h^{1-q^{s(2t-1)}}$ for some $h \in \F_{q^{2t}}$ with $\mathrm{N}_{q^{2t}/q^t}(h)=-1$. Finally, since $Y=\langle \beta v^{\sigma}-v^{\sigma^{t+1}}\rangle_{\mathbb{F}_{q^{2t}}}$,  \textit{3(c)} implies that $\beta \gamma^{q^{2s}}=h^{1+q^{2s}}$, so $\beta=\frac{h^{1+q^{2s}}}{h^{q^{2s}-q^s}}=h^{1+q^s}=-h^{1-q^{s(t+1)}}$. Therefore, $L \cong L_{s,h}^{5,t}$.
 \end{proof}

By Point \textit{$3(a-b)$} of Theorem above, if $h \in \F_{q^t}$, then the cross-ratio $(P^\sigma,P^{\sigma^{2t-1}},D,Q)$ and $(P^\sigma,P^{\sigma^{t+1}},R^{\sigma^2},Y)$ are equal to $-1$. Thus, setting $L^{5,t}_{s}:=L^{5,t}_{h,s}$ with $h \in \F_{q^t}$, we can state the following.

\begin{corollary}
    Let $L$ be an $(1,2(t-1))$-evasive linear set of rank $2t$ in a line $\Lambda$ of $\PG(2t-1, q^{2t})$. Let $\Sigma$ be a canonical subgeometry of $\PG(2t-1, q^{2t})$, $q$ odd. Then $L$ is equivalent to a scattered linear set of the type $L_{s}^{5,t}$ if and only if for each $(2t-3)$ dimensional subspace $\Gamma$ of $\PG(2t-1, q^{2t})$ such that $L = \mathrm{p}_{\Gamma ,\Lambda}(\Sigma)$, the following holds:
\begin{enumerate}
\item [\textit{1.}] there exists a generator $\sigma$ of the subgroup of $\PGaL(2t, q^{2t})$ fixing $\Sigma$ pointwise and a  point $P=\langle v \rangle_{\F_{q^{2t}}} \in \PG(2t-1,q^{2t})$ such that
\begin{equation*}
    \Gamma = \langle P^{\sigma^i},Q, R, S : i \notin \{0,1,t-1,t+1,2t-1\} \rangle
\end{equation*} 
with $Q \in \langle P^{\sigma},P^{\sigma^{2t-1}} \rangle$, $R \in \langle P^{\sigma^{t-1}},P^{\sigma^{2t-1}} \rangle$ and $S \in \langle P^{\sigma^{t+1}}, P^{\sigma^{2t-1}} \rangle$,
\item [\textit{2.}] the point $X= \langle v^\sigma -v^{\sigma^{t-1}}\rangle_{\F_{q^{2t}}}$ belongs to $\Gamma$.
\item [\textit{3.}]  \begin{enumerate}
   \item  [\textit{(a)}] the pair $\{P^{\sigma},P^{\sigma^{2t-1}}\}$ separates $\{D,Q\}$ harmonically, where $D=\langle v^\sigma + v^{\sigma^{2t-1}} \rangle_{\F_{q^{2t}}}$. 
   \item[\textit{(b)}] the pair $\{P^{\sigma},P^{\sigma^{t+1}}\}$ separates $\{R^{\sigma^2},Y\}$ harmonically, where $Y=\langle Q,S\rangle \cap \langle P^{\sigma} , P^{\sigma^{t+1}} \rangle$.
\end{enumerate}
\item [\textit{4.}] If $t$ is odd, then $q \equiv 1 \pmod 4$.
\end{enumerate}
\end{corollary}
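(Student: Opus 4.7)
The plan is to derive the corollary from the preceding theorem by specialising the parameter $h$ to $\F_{q^t}$. The pivotal observation is that for such $h$, the normalisation $\mathrm{N}_{q^{2t}/q^t}(h)=-1$ collapses to $h^{q^t+1}=h^{2}=-1$, so $h$ is a primitive fourth root of unity. All the cross-ratio formulas appearing in the preceding theorem then reduce to simple arithmetic modulo $4$, and the harmonic separation conditions $3(a)$ and $3(b)$ appear as the direct translation of ``cross-ratio $=-1$.''

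For the forward implication, I would assume $L\cong L^{5,t}_{s,h}$ with $h\in\F_{q^t}^{*}$ and apply the preceding theorem to obtain conditions $1$ and $2$, together with the collinearities in $3(a)$ and $3(b)$ and the cross-ratios $-h^{1-q^{s(2t-1)}}$ and $h^{q^{2s}+1}$. Since $q$ is odd and $2s$ is even, $q^{2s}+1\equiv 2\pmod 4$, so $h^{q^{2s}+1}=h^{2}=-1$, proving $3(b)$. For $3(a)$, note that $s$ must be odd (from $\gcd(s,2t)=1$ with $2t$ even) and $2t-1$ is odd, so $s(2t-1)$ is odd; together with condition $4$ this forces $q\equiv 1\pmod 4$, whence $q^{s(2t-1)}\equiv 1\pmod 4$ and $h^{q^{s(2t-1)}}=h$, yielding the cross-ratio $-1$. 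Condition $4$ itself is forced because $h^{2}=-1$ has solutions in $\F_{q^t}$ with $t$ odd only when $-1$ is a square in $\F_{q}$.

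For the backward implication, conditions $1$, $2$ and the collinearities in $3(a)$, $3(b)$ let me invoke the preceding theorem to obtain $L\cong L^{5,t}_{s,h'}$ for some $h'\in\F_{q^{2t}}$ with $\mathrm{N}_{q^{2t}/q^t}(h')=-1$. The harmonic condition $3(a)$ translates to $-(h')^{1-q^{s(2t-1)}}=-1$, equivalently $(h')^{q^{s(2t-1)}}=h'$. Since $\gcd(s(2t-1),2t)=1$, the Frobenius power $\phi_q^{s(2t-1)}$ generates the full Galois group $\Gal(\F_{q^{2t}}/\F_{q})$, so its fixed field is $\F_{q}$; hence $h'\in\F_{q}\subseteq\F_{q^t}$ and $L\cong L^{5,t}_{s}$. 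Condition $3(b)$ then becomes a consistency check $(h')^{2}=-1$, and condition $4$ is automatic.

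The principal obstacle is the parity casework intertwining $q\bmod 4$, the parity of $t$, and the ambient subfield of $h$. The most delicate subcase is $t$ even with $q\equiv 3\pmod 4$, where a fourth root of unity $h$ lies a priori in $\F_{q^{2}}\setminus\F_{q}$, and the naive value of the cross-ratio in $3(a)$ is $1$ rather than $-1$. To handle this I would use the flexibility of the two $\Aut(\Sigma)$-orbits of projection vertices identified in Theorem~\ref{thm:embed} (the $f$ and $\hat f$ cases) to swap such an $h$ with an $\F_{q}$-representative before reading off the cross-ratios, so that the harmonic separation is restored in both implications.
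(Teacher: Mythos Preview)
Your overall strategy --- specialise the preceding theorem to $h\in\F_{q^t}$ and use $h^{2}=-1$ to reduce the explicit cross-ratios $-h^{1-q^{s(2t-1)}}$ and $h^{q^{2s}+1}$ to $-1$ --- is precisely the paper's argument; its justification is the single sentence before the corollary, and you have supplied the mod-$4$ arithmetic it omits. Your handling of $3(b)$ via $q^{2s}+1\equiv 2\pmod 4$ and of condition~$4$ is correct.

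The gap is in your patch for the subcase $t$ even, $q\equiv 3\pmod 4$. First, by Theorem~\ref{quadsimple} the linear set $L^{5,t}_{s,h}$ is \emph{simple} for every $t\neq 4$, so there is only one $\Aut(\Sigma)$-orbit of vertices and no alternative representation to switch to; even for $t=4$, the $\hat\psi$-representation worked out in the proof of the preceding theorem yields cross-ratio $-h^{q^{s}-1}$, which for $h$ of order $4$ and $q^{s}\equiv 3\pmod 4$ is again $1$, not $-1$. Second, and more decisively, there is no ``$\F_q$-representative'' to swap to: $x^{2}=-1$ has no root in $\F_q$ when $q\equiv 3\pmod 4$, so the target of your swap does not exist. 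The same obstruction undermines your backward implication, since the harmonic conditions force the witness $h'$ of the preceding theorem into $\F_q$ with $(h')^{2}=-1$, which is impossible in this regime. You are right that this subcase needs separate care --- the paper's one-line justification glosses over it --- but the orbit-swap idea cannot supply what is missing.
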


\subsubsection*{The linear sets of the type $L_{m,s}^{5,t}$, for $m \in \F_{q^t} \setminus \{0,1\}$}  
 In this subsection we will deal with the family of scattered linear sets of $\mathrm{PG}(1,q^{2t})$, $t \geq 3$ and $q$ odd, recently studied in \cite{SmaZaZu} (cf. \eqref{quadrinomial}).
The elements in this class have the following fashion:
\begin{equation*}
L_{m,s}^{5,t}=\{ \langle(x, m(x^{q^s}-x^{q^{s(t+1)}})+ x^{q^{s(t-1)}}+x^{q^{s(2t-1)}}\rangle_{\mathbb{F}_{q^{2t}}} : x \in \mathbb{F}_{q^{2t}}^* \},
\end{equation*}
where $\gcd(s,2t)=1$.  Here,  $m \in \mathbb{F}_{q^t}^*$ is neither a $(q-1)$-th power nor a $(q+1)$-th power of any element of the vector space $\ker \mathrm{Tr}_{q^{2t}/q^t}$ and if $t$ is odd, $q \equiv 1 \pmod 4$, see \cite[Theorem 2.3]{SmaZaZu}.
Firstly, by \cite[Theorem 4.5]{SmaZaZu}  and Theorem \ref{thm:embed}, for this class of scattered linear sets with  parameter $m \in \F_{q^t}^*$ as above, we can state the following.
\begin{theorem}
    For $t>4$, the $\GaL$-class of the scattered linear set $L_{m,s}^{5,t}$ is two.
\end{theorem}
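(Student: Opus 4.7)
The plan is to prove $c_{\GaL}(L_{m,s}^{5,t}) = 2$ by sandwiching the $\GaL$-class between $2$ and $2$, using the machinery already developed earlier in the paper.

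For the upper bound, I would apply Theorem~\ref{thm:Gamma_L_class}. The linear set $L_{m,s}^{5,t}$ is maximum scattered by \cite[Theorem 2.3]{SmaZaZu}, and it is known to be inequivalent to any linear set of pseudoregulus type (this follows, as in the analogous families discussed just before the statement, because its defining polynomial $\psi_{t,s,m,1}$ is not equivalent to a monomial). Theorem~\ref{thm:Gamma_L_class} then forces every $2t$-dimensional $\F_q$-subspace $W \leq \F_{q^{2t}}^2$ with $L_W = L_{m,s}^{5,t}$ to satisfy either $W = \lambda U_{\psi_{t,s,m,1}}$ or $W = \lambda U_{\hat{\psi}_{t,s,m,1}}$ for some $\lambda \in \F^*_{q^{2t}}$. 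Consequently, such subspaces fall into at most two orbits under the action of $\GaL(2, q^{2t})$, and hence $c_{\GaL}(L_{m,s}^{5,t}) \leq 2$.

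For the lower bound, I would invoke \cite[Theorem 4.5]{SmaZaZu}, which, under the assumptions $t > 4$ and the stated arithmetic hypotheses on $m$, shows that the subspaces $U_{\psi_{t,s,m,1}}$ and $U_{\hat{\psi}_{t,s,m,1}}$ are \emph{not} $\GaL(2, q^{2t})$-equivalent. Since both define the same linear set $L_{m,s}^{5,t}$, this produces two representatives of $L_{m,s}^{5,t}$ in distinct $\GaL$-orbits, forcing $c_{\GaL}(L_{m,s}^{5,t}) \geq 2$. Combining the two bounds yields the desired equality. In view of the final statement of Theorem~\ref{thm:embed}, this also means that the corresponding projection vertices $\Gamma_{\psi}$ and $\Gamma_{\hat{\psi}}$ are inequivalent under $\Aut(\Sigma)$ whenever $\Lambda_{\psi} = \Lambda_{\hat{\psi}}$, a geometric reformulation that may be useful for subsequent characterization results.

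The main obstacle is the non-equivalence statement from \cite[Theorem 4.5]{SmaZaZu}, which I would treat as a black box. Reproving it directly requires examining candidate $\GaL(2, q^{2t})$-maps sending $U_{\psi}$ to $U_{\hat{\psi}}$: one writes such a map in coordinates, extracts polynomial identities over $\F_{q^{2t}}$ indexed by the $q^s$-support $I = \{1, t-1, t+1, 2t-1\}$, and shows that these identities have no solution for $t > 4$. The threshold $t > 4$ is genuinely needed, since the cognate $h$-parameter family with $m=1$ is \emph{simple} for $t = 3$ and $t \geq 5$ (cf.\ Theorem~\ref{quadsimple}); thus the roles played by the parameters $m$ and $h$ in determining simplicity are structurally opposite, and any attempt at a unified argument must carefully separate the two cases.
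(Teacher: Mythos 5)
Your proposal is correct and matches the paper's argument: the paper likewise obtains the upper bound $c_{\GaL}(L_{m,s}^{5,t})\leq 2$ from Pepe's dichotomy (Theorem~\ref{thm:Gamma_L_class}, as packaged in Theorem~\ref{thm:embed}) and the lower bound from the non-equivalence of $U_{\psi}$ and $U_{\hat\psi}$ given by \cite[Theorem 4.5]{SmaZaZu}, treated as a black box. Your extra remarks on the $t>4$ threshold are side commentary and not part of the argument, but they do not affect its validity.
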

  
Similarly to what did before, we are now ready to characterise the projection vertex of a linear set equivalent an element of the type $L_{m,s}^{5,t}$. 
    \begin{theorem}
    Let $L$ be an $(1,2(t-1))$-evasive linear set of rank $2t$ in a line $\Lambda$ of $\PG(2t-1, q^{2t})$. Let $\Sigma$ be a canonical subgeometry of $\PG(2t-1, q^{2t})$. Then $L$ is equivalent to a maximum scattered linear set of the type $L_{m,s}^{5,t}$ if and only if for each $(2t-3)$ dimensional subspace $\Gamma$ of $\PG(2t-1, q^{2t})$ such that $L = \mathrm{p}_{\Gamma ,\Lambda}(\Sigma)$, the following holds:
\begin{enumerate}
\item [\textit{1.}] there exist a generator $\sigma$ of the subgroup of $\PGaL(2t, q^{2t})$ fixing $\Sigma$ pointwise and a  point $P=\langle v \rangle_{\F_{q^{2t}}}\in \PG(2t-1,q^{2t})$ such that
\begin{equation*}
    \Gamma = \langle P^{\sigma^i},Q, R, S : i \notin \{0,1,t-1,t+1,2t-1\} \rangle
\end{equation*} 
with $Q \in \langle P^{\sigma},P^{\sigma^{2t-1}} \rangle$, $R \in \langle P^{\sigma^{t-1}},P^{\sigma^{2t-1}} \rangle$ and $S \in \langle P^{\sigma^{t+1}}, P^{\sigma^{2t-1}} \rangle$,
     \item [\textit{2.}] the point $C=\langle v^{\sigma}+v^{\sigma^{t+1}} \rangle_{\mathbb{F}_{q^{2t}}} \in \Gamma$,
\item [\textit{3.}] there exists $\mu \in \mathcal{W}= \{ z \in \F_{q^{t}} \colon   \nexists\, w \in \ker \mathrm{Tr}_{q^{2t}/q^t} \textnormal{ s.t. } z=w^{q-1} \textnormal{ or } z=w^{q+1} \}$  and
 \begin{enumerate}

     \item the points $P^\sigma, P^{\sigma^{2t-1}}$, $X$, $Q$, where $X= \langle v^{\sigma}-v^{\sigma^{2t-1}}\rangle_{\F_{q^{2t}}}$, are collinear  and the cross ratio $(P^{\sigma};P^{\sigma^{2t-1}};X;Q)=\mu$,
\item the points $P^{\sigma^{t-1}}, P^{\sigma^{2t-1}}$, $Y$, $R$, where $Y= \langle v^{\sigma^{t-1}}+v^{\sigma^{2t-1}}\rangle_{\F_{q^{2t}}}$, are collinear and the pair $\{ P^{\sigma^{t-1}}, P^{\sigma^{2t-1}} \}$ separates the pair $\{ Y, R \}$ harmonically.
 \end{enumerate}
\end{enumerate}
\end{theorem}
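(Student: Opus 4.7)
The plan is to follow the same blueprint used for the preceding characterization theorems (namely those for $L^{3,6}_{1,\eta}$, $L^{4,6}_\delta$ and $L^{5,t}_{s,h}$): reduce the forward direction to an explicit check on the two canonical vertices $\Gamma_f$ and $\Gamma_{\hat f}$ via Proposition \ref{prop:pointgamma}, and the reverse direction to an application of Theorem \ref{inverseline} followed by pinning down the coefficients of the resulting linearized polynomial.

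For $(\Rightarrow)$, I would start from $L\cong L^{5,t}_{m,s}$ defined by $f(x)=mx^{q^s}+x^{q^{s(t-1)}}-mx^{q^{s(t+1)}}+x^{q^{s(2t-1)}}$, so $\mathrm{supp}_{q^s}f=\{1,t-1,t+1,2t-1\}$ and $\deg_{q^s}f=2t-1$. Since $c_{\GaL}(L^{5,t}_{m,s})=2$ by the theorem recalled just before this statement, and since $\Lambda_f=\Lambda_{\hat f}$, Proposition \ref{prop:pointgamma} produces a collineation $\beta$ of $\Sigma^*$ with $\boldsymbol\Sigma^\beta=\Sigma$ and either $\Gamma_f^\beta=\Gamma$ or $\Gamma_{\hat f}^\beta=\Gamma$. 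In the first case, take the canonical data $\boldsymbol P=\langle\boldsymbol v\rangle_{\F_{q^{2t}}}$, $\boldsymbol\rho=\boldsymbol\sigma^s$, and the three mixing points on the relevant lines,
\begin{equation*}
\boldsymbol Q=\langle \boldsymbol v^{\boldsymbol\rho^{2t-1}}-m\boldsymbol v^{\boldsymbol\rho}\rangle_{\F_{q^{2t}}},\ \boldsymbol R=\langle \boldsymbol v^{\boldsymbol\rho^{2t-1}}-\boldsymbol v^{\boldsymbol\rho^{t-1}}\rangle_{\F_{q^{2t}}},\ \boldsymbol S=\langle \boldsymbol v^{\boldsymbol\rho^{2t-1}}+m\boldsymbol v^{\boldsymbol\rho^{t+1}}\rangle_{\F_{q^{2t}}},
\end{equation*}
dictated by Theorem \ref{Fdirect}. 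Then verify that the lines $\langle \boldsymbol Q,\boldsymbol S\rangle$ and $\langle \boldsymbol P^{\boldsymbol\rho},\boldsymbol P^{\boldsymbol\rho^{t+1}}\rangle$ meet at $\boldsymbol C=\langle \boldsymbol v^{\boldsymbol\rho}+\boldsymbol v^{\boldsymbol\rho^{t+1}}\rangle_{\F_{q^{2t}}}\in\Gamma_f$, and compute
\begin{equation*}
(\boldsymbol P^{\boldsymbol\rho};\boldsymbol P^{\boldsymbol\rho^{2t-1}};\boldsymbol X;\boldsymbol Q)=m,\qquad (\boldsymbol P^{\boldsymbol\rho^{t-1}};\boldsymbol P^{\boldsymbol\rho^{2t-1}};\boldsymbol Y;\boldsymbol R)=-1
\end{equation*}
by the usual $2\times 2$ determinant identities. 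Collinearity and harmonic conjugation are preserved by $\beta$, while cross-ratios are preserved up to the Galois companion $\tau$ of $\beta$; since $\mathcal W$ is defined by a Galois-invariant condition and $m\in\F_{q^t}$, the image $\mu=m^\tau$ still lies in $\mathcal W$. For the case $\Gamma_{\hat f}^\beta=\Gamma$ I would rewrite $\Gamma_{\hat f}$ via Remark \ref{aggiunto} and swap $\boldsymbol\rho\leftrightarrow\boldsymbol\rho^{-1}$, reducing the subcase to the previous one.

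For $(\Leftarrow)$, Theorem \ref{inverseline} applied to condition (1) yields $L\cong L_{f'}$ with
\begin{equation*}
f'(x)=\alpha x^{q^s}+\beta x^{q^{s(t-1)}}+\gamma x^{q^{s(t+1)}}+\delta x^{q^{s(2t-1)}}
\end{equation*}
for some $\alpha,\beta,\gamma,\delta\in\F_{q^{2t}}^\ast$ and some imaginary point $P=\langle v\rangle_{\F_{q^{2t}}}$, with $Q,R,S$ uniquely determined (up to scalar) by these coefficients. Condition (2), i.e. $C=\langle v^\sigma+v^{\sigma^{t+1}}\rangle_{\F_{q^{2t}}}\in\Gamma$, forces a specific linear dependence among $Q$, $S$ and the $P^{\sigma^i}$'s that translates into $\gamma=-\alpha$. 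Condition (3)(a) fixes $\delta/\alpha=-\mu$, while condition (3)(b) (harmonic separation) gives $\beta=\delta$. Rescaling $v$ to normalize $\delta=1$ and setting $m:=\mu$, we recover $f'\equiv \psi_{t,s,m,1}$ and therefore $L\cong L^{5,t}_{m,s}$; the hypothesis $\mu\in\mathcal W$ guarantees, via \cite[Theorem 2.3]{SmaZaZu}, that this linear set is indeed maximum scattered.

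The main obstacle is the two-orbit structure of the $\GaL$-class of $L^{5,t}_{m,s}$, which requires that the symmetric case $\Gamma_{\hat f}^\beta=\Gamma$ produce exactly the same pattern of three conditions on $\Gamma$ as $\Gamma_f^\beta=\Gamma$; as in the proof of the $L^{4,6}_\delta$ characterization, I expect this to hinge on a careful application of Remark \ref{aggiunto} together with the swap $\sigma\leftrightarrow\sigma^{-1}$, so that the four canonical points $P^\sigma,P^{\sigma^{t-1}},P^{\sigma^{t+1}},P^{\sigma^{2t-1}}$ and the three mixing points $Q,R,S$ in the statement can be reassigned compatibly. The rest is a routine sequence of $2\times 2$ determinant computations, mindful of sign conventions and of the identification of which element of $\mathrm{supp}_{q^s}f$ plays the role of the ``maximum'' $m_\ell$ in Theorem \ref{Fdirect}.
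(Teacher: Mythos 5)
Your overall strategy is the same as the paper's (forward direction via Theorem \ref{thm:move-gamma}/Proposition \ref{prop:pointgamma} reducing to an explicit check on the canonical vertices, converse via Theorem \ref{inverseline} and pinning down the coefficients), but your explicit data are wrong at a point where the proof actually lives. By Theorem \ref{Fdirect}, the point of $\Gamma_{\psi_{t,m,s}}$ on the line $\langle \boldsymbol P^{\boldsymbol\rho},\boldsymbol P^{\boldsymbol\rho^{2t-1}}\rangle$ is $\langle a_{2t-1}\boldsymbol v^{\boldsymbol\rho}-a_{1}\boldsymbol v^{\boldsymbol\rho^{2t-1}}\rangle=\langle m\boldsymbol v^{\boldsymbol\rho^{2t-1}}-\boldsymbol v^{\boldsymbol\rho}\rangle$, and similarly $\boldsymbol S=\langle m\boldsymbol v^{\boldsymbol\rho^{2t-1}}+\boldsymbol v^{\boldsymbol\rho^{t+1}}\rangle$: your $\boldsymbol Q=\langle \boldsymbol v^{\boldsymbol\rho^{2t-1}}-m\boldsymbol v^{\boldsymbol\rho}\rangle$ and $\boldsymbol S=\langle \boldsymbol v^{\boldsymbol\rho^{2t-1}}+m\boldsymbol v^{\boldsymbol\rho^{t+1}}\rangle$ do not satisfy the second equation of the vertex ($m x_{s}+x_{s(t-1)}-mx_{s(t+1)}+x_{s(2t-1)}=0$ evaluates to $1-m^2\neq0$ on them), so the configuration you verify is not the one spanning $\Gamma_{\psi_{t,m,s}}$. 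With the correct points the cross-ratio in 3(a) is $1/m$, not $m$; this is harmless only because $\mathcal W$ is closed under inversion (the paper records exactly this), a fact you never invoke. The same slip propagates to your converse: with $Q=\langle \delta v^{\sigma}-\alpha v^{\sigma^{2t-1}}\rangle$ the cross-ratio $(P^{\sigma};P^{\sigma^{2t-1}};X;Q)$ equals $\delta/\alpha$, so condition 3(a) gives $\delta/\alpha=\mu$, not $-\mu$; after normalizing $\delta=1$ you obtain $\alpha=1/\mu$, $\beta=1$, $\gamma=-\alpha$, hence $f'=\psi_{t,s,1/\mu,1}$ and $L\cong L^{5,t}_{1/\mu,s}$, and you must again use that $\mu\in\mathcal W$ implies $1/\mu\in\mathcal W$. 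As written, your relations are even internally inconsistent ($\psi_{t,s,\mu,1}$ has $\delta/\alpha=1/\mu$, not $-\mu$), and the parameter $-1/\mu$ that your signs would produce is not known to lie in $\mathcal W$.

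The second genuine gap is the case $\Gamma_{\hat\psi}^{\beta}=\Gamma$, which you only announce (``I expect this to hinge on\dots''). This case cannot be waved away, since for $t>4$ the $\GaL$-class is two and the two vertex orbits are genuinely distinct; the paper resolves it exactly along the line you guess, but concretely: it rewrites $\Gamma_{\hat\psi_{t,m,s}}$ via Remark \ref{aggiunto} with mixing points on the lines through $\boldsymbol P^{\boldsymbol\rho}$ (here $\min\hat I=1$), checks that $\langle \boldsymbol v^{\boldsymbol\rho^{2t-1}}+\boldsymbol v^{\boldsymbol\rho^{t-1}}\rangle\in\Gamma_{\hat\psi_{t,m,s}}$ using $m\in\F_{q^t}$, computes the analogous cross-ratios ($m^{q^{s(2t-1)}}$ and $-1$), and then sets $\sigma=\beta^{-1}\boldsymbol\rho^{2t-1}\beta$ while interchanging the roles of $Q$ and $S$ so that conditions 1--3 hold verbatim for $\Gamma$. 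So your sketch points in the right direction, but the proposal as it stands has unverified (and, in the coordinates you chose, false) computations precisely where the characterization is established.
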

\begin{proof}
 Let us suppose that $L$ is equivalent to $L_{m,s}^{5,t}$.
  By Theorem \ref{thm:move-gamma}, there exists $\beta$ such that $\boldsymbol{\Sigma}^\beta=\Sigma$, saying $\psi_{t,m,s}:=\psi_{t,m,h,s}$ with $h \in \F_{q^t}$, and either $\Gamma_{\psi_{t,m,s}}^\beta=\Gamma$  or $\Gamma_{\hat{\psi}_{t,m,s}}^\beta=\Gamma$. \\
If  $\beta \in \PGaL(2t,q^{2t})$ such that $\Gamma_{\psi_{t,m,s}}^\beta=\Gamma$, then let $\boldsymbol{\rho}=\boldsymbol{\sigma}^s$ with $\gcd(s,2t)=1$, $\boldsymbol{P}=\langle \textbf{v} \rangle_{\F_{q^{2t}}}$ be an imaginary point of $\PG(2t-1,q^{2t})$ (wrt $\boldsymbol{\Sigma}$) and
\begin{equation*}
    \Gamma_{\psi_{t,m,s}}=\langle \boldsymbol{P}^{\boldsymbol{\rho}^{i}}, \boldsymbol{Q}, \boldsymbol{R}, \boldsymbol{S}: i \not \in  \{1,t-1,t+1,2t-1\} \rangle
\end{equation*} 
where
\begin{equation*}
\boldsymbol{Q}=\langle m\textbf{v}^{\boldsymbol{\rho}^{2t-1}}-\textbf{v}^{\boldsymbol{\rho}}\rangle_{\F_{q^{2t}}},\quad \boldsymbol{R}= \langle \textbf{v}^{\boldsymbol{\rho}^{2t-1}}-\textbf{v}^{\boldsymbol{\rho}^{t-1}}\rangle_{\F_{q^{2t}}}, \textnormal{ and }
\end{equation*}
\begin{equation*}
\boldsymbol{S}=\langle m\textbf{v}^{\boldsymbol{\rho}^{2t-1}}+\textbf{v}^{\boldsymbol{\rho}^{t+1}}\rangle_{\F_{q^{2t}}}.
\end{equation*}
Let us consider  the point $\boldsymbol{C}=\langle \textbf{v}^{\boldsymbol{\rho}}+\textbf{v}^{\boldsymbol{\rho}^{t+1}}\rangle_{\F_{q^{2t}}}$; it belongs to $\Gamma_{\psi_{t,m,s}}$. Moreover, let $\boldsymbol{X}=\langle \textbf{v}^{\boldsymbol{\rho}} -\textbf{v}^{\boldsymbol{\rho}^{2t-1}} \rangle_{\F_{q^{2t}}}$.   Clearly, $\boldsymbol{X},\boldsymbol{Q} \in \langle \boldsymbol{P}^{\boldsymbol{\rho}};\boldsymbol{P}^{\boldsymbol{\rho}^{2t-1}} \rangle$ and $(\boldsymbol{P}^{\boldsymbol{\rho}};\boldsymbol{P}^{\boldsymbol{\rho}^{2t-1}};\boldsymbol{X};\boldsymbol{Q}) = 1/m$. Finally,  $(\boldsymbol{P}^{\boldsymbol{\rho}^{t-1}};\boldsymbol{P}^{\boldsymbol{\rho}^{2t-1}}; \boldsymbol{Y};\boldsymbol{R} )=-1$ where $\boldsymbol{Y}=\langle \textbf{v}^{\boldsymbol{\rho}^{t-1}} + \textbf{v}^{\boldsymbol{\rho}^{2t-1}} \rangle_{\F_{q^{2t}}}$. By setting  $\sigma=\beta^{-1}\boldsymbol{\rho}\beta$, $P=\boldsymbol{P}^\beta$, $Q=\boldsymbol{Q}^\beta$, $R=\boldsymbol{R}^\beta$, $S=\boldsymbol{S}^\beta$, $C=\boldsymbol{C}^\beta$ $X=\boldsymbol{X}^\beta$ and $Y=\boldsymbol{Y}^\beta$ and noting that $\beta$ preserves the collinearity, being harmonically separated and if
$(A_1;A_2;A_3;A_4) \in \mathcal{W}$ then $(A^\beta_1;A^\beta_2;A^\beta_3;A^\beta_4)\in \mathcal{W}$, the Point\textit{(1--3)} follow.\\
If $\Gamma_{\hat{\psi}_{t,m,s}}^\beta=\Gamma$, by Remark \ref{aggiunto},
\begin{equation*}
    \Gamma_{\hat{\psi}_{t,m,s}}=\langle \boldsymbol{P}^{\boldsymbol{\rho}^i},\boldsymbol{Q},\boldsymbol{R}, \boldsymbol{S} : i \notin \{0,1,t-1,t+1,2t-1\} \rangle 
\end{equation*}  
where
\begin{equation*}
\boldsymbol{Q}=\langle m^{\boldsymbol{\rho}^{t-1}}\textbf{v}^{\boldsymbol{\rho}}+\textbf{v}^{\boldsymbol{\rho}^{t-1}}\rangle_{\F_{q^{2t}}},\quad \boldsymbol{R}= \langle \textbf{v}^{\boldsymbol{\rho}}-\textbf{v}^{\boldsymbol{\rho}^{t+1}}\rangle_{\F_{q^{2t}}}, \textnormal{ and }
\end{equation*}
\begin{equation*}
\boldsymbol{S}=\langle m^{\boldsymbol{\rho}^{2t-1}}\textbf{v}^{\boldsymbol{\rho}}-\textbf{v}^{\boldsymbol{\rho}^{2t-1}}\rangle_{\F_{q^{2t}}}
\end{equation*}
Let us consider  the point $\boldsymbol{C}=\langle \textbf{v}^{\boldsymbol{\rho}^{2t-1}}+\textbf{v}^{\boldsymbol{\rho}^{t-1}}\rangle_{\F_{q^{2t}}}$. Since $m \in \F_{q^t}$, $ \boldsymbol{C} \in\Gamma_{\hat{\psi}_{t,m,s}}$. Moreover, let consider the point $\boldsymbol{X}=\langle \textbf{v}^{\boldsymbol{\rho}} -\textbf{v}^{\boldsymbol{\rho}^{2t-1}} \rangle_{\F_{q^{2t}}}$. Similarly to the previous case, $\boldsymbol{X},\boldsymbol{S} \in \langle \boldsymbol{P}^{\boldsymbol{\rho}};\boldsymbol{P}^{\boldsymbol{\rho}^{2t-1}} \rangle$ and $(\boldsymbol{P}^{\boldsymbol{\rho}};\boldsymbol{P}^{\boldsymbol{\rho}^{2t-1}};\boldsymbol{X};\boldsymbol{S}) = m^{\boldsymbol{\rho}^{2t-1}}$. If $\boldsymbol{Y}= \langle \textbf{v}^{\boldsymbol{\rho}^{t+1}} + \textbf{v}^{\boldsymbol{\rho}} \rangle_{\F_{q^{2t}}}$,  the cross-ratio $(\boldsymbol{P}^{\boldsymbol{\rho}^{t-1}};\boldsymbol{P}^{\boldsymbol{\rho}^{2t-1}}; \boldsymbol{Y};\boldsymbol{R} )=-1$.  As we have done before, the result follows by setting  $\sigma=\beta^{-1}\boldsymbol{\rho}^{2t-1}\beta$, $P=\boldsymbol{P}^\beta$, $Q=\boldsymbol{S}^\beta$, $R=\boldsymbol{R}^\beta$, $S=\boldsymbol{Q}^\beta$, $C=\boldsymbol{C}^\beta$, $X=\boldsymbol{X}^\beta$ and $Y=\boldsymbol{Y}^\beta$. \\
Now, let us suppose that Point \textit{(1--3)} hold true. Then, by Theorem \ref{inverseline}, $L \cong L_f$ with $f(x)=\alpha x^{q^s}+\beta x^{q^{s(t-1)}}+\gamma x^{q^{s(t+1)}}+x^{q^{s(2t-1)}} \in \F_{q^{2t}}[x]$ for some $\alpha,\beta,\gamma \in \F_{q^{2t}}$ and some integer $1 \leq s \leq 2t-1$ such that $\gcd(s,2t)=1$. Furthermore there exist $P=\langle v \rangle _{\F_{q^{2t}}}$, an imaginary point with respect to the subgeometry $\Sigma$ fixed pointwise by a collineation $\sigma \in \PGaL(2t,q^{2t})$. Thus, let consider the points $Q=\langle \alpha v^{\sigma^{2t-1}}- v^{\sigma}\rangle_{\F_{q^{2t}}}$, $R= \langle \beta v^{\sigma^{2t-1}}- v^{\sigma^{t-1}}\rangle_{\F_{q^{2t}}}$ and $S= \langle \gamma v^{\sigma^{2t-1}}- v^{\sigma^{t+1}}\rangle_{\F_{q^{2t}}}$. Since $C=\langle v^{\sigma}+v^{\sigma^{t+1}} \rangle_{\F_{q^{2t}}}\in \Gamma$, then $\alpha=-\gamma$. By Point \textit{3(a)}, the cross-ratio $(P^{\sigma};P^{\sigma^{2t-1}};X;Q)=1/{\alpha} \in \mathcal{W}$. By Point \textit{3(b)}, we get $\beta=1$. Since, if $1/\alpha \in \mathcal{W}$, then $\alpha \in \mathcal{W}$, we get $f(x)=\psi_{t,\alpha,s}(x)$ and hence $L \cong L_{\alpha,s}^{5,t}.$
    \end{proof}

\section*{Acknowledgments}

The authors express their gratitude to Prof. Giuseppe Marino for helpful
discussions and technical advices. Moreover, they wish to thank
the Italian National Group for Algebraic and Geometric Structures
and their Applications (GNSAGA– INdAM) for supporting this research.

\vspace{1cm}
\noindent Dipartimento di Matematica e Applicazioni “Renato Caccioppoli”\\
Università degli Studi di Napoli Federico II,\\
Via Cintia, Monte S. Angelo I-80126 Napoli, Italy. \\
email addresses: \\
\texttt{\{giovannigiuseppe.grimaldi, somi.gupta, \\giovanni.longobardi, rtrombet\}@unina.it}

\end{document}